\newtheorem{thm}{Theorem}[section]
\newtheorem{cor}[thm]{Corollary}
\newtheorem{prop}[thm]{Proposition}
\newtheorem{lem}[thm]{Lemma}
\theoremstyle{definition}
\newtheorem{defn}[thm]{Definition}
\newtheorem{cnd}[thm]{Condition}
\newtheorem{rem}[thm]{Remark}
\newcommand{\thmref}[1]{Theorem~{\rm \ref{#1}}}
\newcommand{\lemref}[1]{Lemma~{\rm \ref{#1}}}
\newcommand{\cndref}[1]{Condition~{\rm \ref{#1}}}
\newcommand{\propref}[1]{Proposition~{\rm \ref{#1}}}
\newcommand{\remref}[1]{Remark~{\rm \ref{#1}}}
\newcommand{\figref}[1]{Figure~{\rm \ref{#1}}}
\newcommand{\sectref}[1]{Section~{\rm \ref{#1}}}
\makeatletter \@addtoreset{equation}{section}
\def\R{\ensuremath {\mathbb R}}
\newcommand{\EE}{\mathbb E}
\newcommand{\PP}{\mathbb P}
\newcommand{\NN}{\mathbb N}
\newcommand{\RR}{\mathbb R}
\newcommand{\I}{{\mathcal I}}
\newcommand{\A}{{\mathcal A}}
\newcommand{\B}{{\mathcal B}}
\newcommand{\yzstar}{{y_0^*}}
\newcommand{\zzstar}{{z_0^*}}
\newcommand{\e}{\varepsilon}
\newcommand{\wdh}{\widehat}
\newcommand{\wdt}{\widetilde}
\newcommand{\ovl}{\overline}
\newcommand{\lbar}{\overline}
\newcommand{\cadlag}{c\'{a}dl\'{a}g }
\renewcommand{\hat}{\widehat}
\newcommand{\E}{{\cal E}}
\newcommand{\comment}[1]{} 
\newcommand{\vareps}{\varepsilon}
\title{Single-Item Continuous-Review Inventory Models with Random Supplies\thanks{This research was supported in part by the Simons Foundation (grant award numbers  246271 and  523736) and  a DIG award from the University of Wisconsin-Milwaukee.}}
\author[1]{K.L. Helmes}
\author[2]{R.H. Stockbridge}
\author[2]{C. Zhu}
\affil[1]{\small Institute for Operations Research, Humboldt University of Berlin, Spandauer Str. 1, 10178, Berlin, Germany, {\tt helmes@wiwi.hu-berlin.de}}
\affil[2]{Department of Mathematical Sciences,   University of Wisconsin-Milwaukee,   Milwaukee, WI 53201,   USA,   {\tt stockbri@uwm.edu}, {\tt zhu@uwm.edu}}
\date{~}
\begin{document}
\maketitle

\begin{abstract}
This paper analyzes single-item continuous-review inventory models with random supplies in which the inventory dynamic between orders is described by a diffusion process, and a long-term average cost criterion is used to evaluate decisions. The class of models have general drift and diffusion coefficients and boundary points that are consistent with the notion that demand should tend to reduce the inventory level. Random yield is described by a (probability) transition function which depends on the inventory-on-hand and the {\em nominal} amount ordered; it is assumed to be a distribution with support in the interval determined by the order-from and the nominal order-to locations of the stock level.  Using weak convergence arguments involving average expected occupation and ordering measures, conditions are given for the optimality of an $(s,S)$ ordering policy in the general class of policies with finite expected cost.  The characterization of the cost of an $(s,S)$-policy as a function of two variables naturally leads to a nonlinear optimization problem over the stock levels $s$ and $S$ and existence of an optimizing pair $(s^*,S^*)$ is established under weak conditions. Thus, optimal policies of inventory models with random supplies can be (easily) numerically computed.  The range of applicability of the optimality result is illustrated on several inventory models with random yields. 

\vspace{5 mm}
	
\noindent
{\em MSC Classifications.}\/ 93E20, 90B05, 60H30
\vspace{3 mm}

\noindent
{\em Key words.}\/ inventory models with random supplies, impulse control, long-term average cost, general diffusion models, $(s,S)$ policies, weak convergence

\end{abstract}

\section{Introduction}
This paper analyzes a continuous-review inventory management problem when the stock level process is a diffusion with deficient supply; a long-term average cost criterion is used.  The control over the inventory levels is through the action of ordering additional {\em nominal} stock which then results in a random yield of whatever has been ordered. We identify sufficient conditions for optimality of an $(s,S)$ ordering policy in the most general class of admissible policies. 

We model the inventory processes (in the absence of orders) as solutions to a stochastic differential equation
\begin{equation} \label{dyn}
dX_0(t) = \mu(X_0(t))\, dt + \sigma(X_0(t))\, dW(t), \qquad X_0(0) = x_0,
\end{equation}
taking values in an interval ${\mathcal I} = (a,b)$; negative values of $X_0(t)$ represent back-ordered inventory. The detailed discussion in \cite{chen:10} 
validates state-dependent diffusion models for inventory management.

Following the classical approach in inventory theory, an ordering policy $(\tau,O)$ for a model with random supplies is a sequence of pairs $\{(\tau_k,O_k): k \in \NN\}$ in which $\tau_k$ denotes the (random) time at which the $k$th order is placed and $O_k$ denotes its (nominal) size.  The random supply is modelled by the random slack $\Theta$ which is a sequence in which, for each $k$, $\Theta_k$ gives the deficit of the quantity delivered from the order amount $O_k$;  it also represents the deficiency between the intended inventory level and the actual level after the order delivery.  While the order quantities $\{O_k\}$ are determined by the decision maker, the corresponding slack variables $\{\Theta_k\}$ arise from factors involving the supplier.  The inventory level process $X$ resulting from an ordering policy $(\tau,O)$ and corresponding slack $\Theta$ therefore satisfies the equation
\begin{equation} \label{controlled-dyn}
X(t) = x_0 + \int_0^t \mu(X(s))\, ds + \int_0^t \sigma(X(s))\, dW(s) + \sum_{k=1}^\infty I_{\{\tau_k \leq t\}} (O_k-\Theta_k).
\end{equation}
Note, the initial inventory level $X(0-)=x_0$ may be such that an order is placed at time $0$ resulting in a new inventory level at time $0$; this possibility occurs when $\tau_1 = 0$.  Also observe that $X(\tau_k-)$ is the inventory level just prior to the $k$th order being placed while $X(\tau_k)=X(\tau_k-)+O_k-\Theta_k$ is the level with the new inventory.  Thus, this model assumes that orders are filled instantaneously.  \sectref{sect:form} describes the inventory process $X$ more formally as an impulse controlled diffusion process and adopts a different formulation of a nominal ordering policy $(\tau,Z)$ in which $Z=\{Z_k\}$ denotes the nominal inventory levels following (non-deficient) orders.

For the time being, continuing with the informal description above, let $(\tau,O)$ be an ordering policy, $\Theta$ be the corresponding slack and let $X$ be the resulting inventory level process satisfying \eqref{controlled-dyn}. Let $c_0$ and $c_1$ denote the holding/back-order cost rate and (nominal) ordering cost functions, respectively.  We assume there is some constant $k_1 > 0$ such that $c_1 \geq k_1$; this constant represents the fixed cost for placing each order.  The long-term average expected holding/back-order plus ordering costs to be analyzed is
\begin{equation} \label{lta-obj-fn}
J:= \limsup_{t\rightarrow \infty} t^{-1} \EE\left[\int_0^t c_0(X(s))\, ds + \sum_{k=1}^\infty I_{\{\tau_k \leq t\}} c_1(X(\tau_k-),X(\tau_k))\right];
\end{equation}
the expectation is with respect to all random factors involved in the model. The goal is to identify an ordering policy so as to minimize the cost.
For models with random supplies there are other more exotic cost structures that can be considered. The use of $X(\tau_k)$ in the cost functional \eqref{lta-obj-fn} captures the situation:  ``you pay for what you get"; see the paragraph following \cndref{cost-cnds} for further details.

As we have mentioned earlier, we study a generalization of the problem examined in \cite{helm:18}. In particular, we refer the reader to that paper and to \cite{helm:15b} for a discussion of the existing literature related to the problem with {\em non-deficient} supplies in which $\Theta_k=0$ for all $k$; see also \cite{bens:11} and references therein. As far as problems with {\em random} yield are concerned, the papers \cite{yano_lee:95} and \cite{tina:17} provide excellent reviews of such single-item continuous-review inventory models. In particular, the \cite{yano_lee:95} survey paper offers an extensive account of how various yield distributions and cost structures arise in practical applications. Papers by \cite{fed_zip:86} and \cite{zhe_fed:91} are more technical in nature. Furthermore, \cite{fed_zip:86} explicitly addresses the optimality of $(s,S)$-policies for a special continuous-review model with random supplies. The paper \cite{zhe_fed:91} is most useful since it decribes an efficient algorithm for computing optimal $(s,S)$-policies and applies to both periodic-review and continuous-review inventory systems. The paper \cite{bpp:94} considers a continuous review problem with (proportional) random yield.   The authors use renewal theory to analyze their inventory model, which is also used in this paper. Among the many other papers devoted to inventory problems with random yield, we would like to point to the publications \cite{ind_tra:07}, \cite{ind_vog:13} and \cite{song_wang:17}.  These papers analyze {\em periodic} review problems and nicely describe the challenges due to the presence of (uniformly distributed) random supply.  The paper \cite{sato:18} analyzes an infinite-horizon discounted cost criterion for a distributor when the supplier has uncertain production.  Further, it considers both the  supplier's and the distributor's problems, showing that coordinized decision making results in reduced expected costs.

Irrespectively of the many different models that have been considered in the literature, a common theme which lurks in the background of all the papers devoted to random yield is the quest to identify either an optimal or at least a nearly optimal order strategy. In some publications, the thrust is to propose and justify a heuristic policy, assuming that an optimal order policy has a particular (simple) structure. Instead, we formulate general conditions on the model under which an $(s,S)$ policy is optimal for the long-term average criterion.

This paper extends to the case with {\em random yield} our examination of inventory models of diffusion type with {\em non-deficient supplies} in \cite{helm:18}.   Even though the same approach is used in these two papers, the analyses are more technical in the present manuscript due to the inclusion of random supplies satisfying \cndref{Q-cnds}.  For example, a {\em Minimum Delivery Guaranty}\/ condition is required for the existence of a valid mathematical model, a point that has been overlooked in the literature; see for example \cite{korn:97}.  Also an {\em Assured Supply Commitment}\/ condition is essential to our proof of optimality of an $(s,S)$ policy; see \thmref{G0-feasible}.  Furthermore, \cndref{extra-cnd} of this paper removes a monotonicity requirement in Condition~2.3 of \cite{helm:18}, allowing the results to apply to a larger class of models.

This paper is organized as follows.  The next section formulates the problem; in particular, it states conditions on the family of random yield measures that are key to the existence of a mathematical model for continuous-time inventory management as well as the optimization results.  It further introduces two important functions and adapts some results from \cite{helm:18} to the model having random supply.  It culminates with the main existence result in \thmref{F-optimizers}.  Sections \ref{sect:occ-meas} and \ref{sect-main result} briefly discuss the expected occupation and ordering measures, adapted for models with random yield, and an auxiliary function $U_0$, which are at the heart of the analysis.  \sectref{sect:optim} then establishes the optimality of an $(s,S)$ policy within the much larger class of admissible nominal ordering policies.  The main optimality result is in \thmref{G0-feasible}; its proof is broken into several parts which precede it.  The paper concludes with a discussion of three examples in Section \ref{sect:examples} which indicate the reliance of Theorems \ref{F-optimizers} and \ref{G0-feasible} in obtaining an optimal ordering policy.

\section{Formulation and Existence Result} \label{sect:form}
This section briefly establishes the models under consideration which generalize those studied in \cite{helm:18}. While the general approach is very similar to the one taken in that paper, special care must be taken pertaining to the formulation of the random yield, the cost structure, the definition of the (nominal) occupation measure, the  particular jump operators and the proofs of several results. The differences between the two papers will be highlighted in the following sections. For a detailed discussion of the dynamics of the underlying uncontrolled diffusion and its boundary behavior we refer the reader to \cite{helm:18} and to Chapter 15 of \cite{karl:81}. The latter reference is particularly useful when checking properties of the scale function and the speed measure; both concepts are used in the definition of functions in \sectref{important-fns}.  

\subsection{Formulation of the model}
Let ${\mathcal I} = (a,b) \subseteq \R$.  In the absence of ordering, the inventory process $X_0$ satisfies \eqref{dyn} and is a regular diffusion. Throughout the paper we assume that the functions $\mu$ and $\sigma$ are continuous on $\I$, and that \eqref{dyn} is nondegenerate. The initial position of $X_0$ is taken to be $x_0$ for some $x_0 \in {\cal I}$.  We place the following assumptions on the underlying diffusion model.

\begin{cnd} \label{diff-cnd}
\begin{description}
\item[(a)] Both the speed measure $M$ and the scale function $S$ of the process $X_0$ are absolutely continuous with respect to Lebesgue measure.
\item[(b)] The left boundary $a$ is attracting and the right boundary $b$ is non-attracting.  Moreover, when $b$ is a natural boundary, $M[y,b) < \infty$ for each $y \in \I$.  The boundaries $a = -\infty$ and $b = \infty$ are required to be natural.
\end{description}
\end{cnd}

Associated with the scale function $S$ of \cndref{diff-cnd}, one can define the scale measure on the Borel sets of ${\mathcal I}$ by $S[y,v] = S(v) - S(y)$ for $[y,v] \subset {\mathcal I}$. From the modeling point of view, \cndref{diff-cnd}(b) is reasonable since it essentially says that, in the absence of ordering, demand tends to reduce the size of the inventory. The boundary point $a$ may be regular, exit or natural with $a$ being attainable in the first two cases and unattainable in the third. In the case that $a$ is a regular boundary, its boundary behavior must also be specified as being either reflective or sticky. The boundary point $b$ is either natural or entrance and is unattainable from the interior in both cases. Following the approach in \cite{helm:15b} and \cite{helm:18}, we define the state space of possible inventory levels to be the interval ${\cal E}$ which excludes any natural boundary point; it includes $a$ when $a$ is attainable, and $b$ when it is entrance. Since orders typically increase the inventory level, define ${\cal R} = \{(y,z) \in {\cal E}^2: y < z\}$, the set of states cross the set of feasible actions (in a particular state), in which $y$ denotes the pre-order inventory level and the control value $z$ is the {\em nominal} post-order level.  The {\em actual} post-order inventory level will be determined by $y$, $z$ and the realization of the slack variable of the associated order size; explained differently, the  post-order inventory level is given as the realization of a transition function $Q(\cdot\,;y,z)$ which depends on $(y,z)$. 

Since we are using weak convergence methods for measures on ${\cal E}$ and ${\cal R}$, we will need the closures of these sets as well. Define $\overline{\mathcal{E}}$ to be the {\em closure in $\RR$}\/ of $\mathcal{E}$; thus when a boundary is finite and natural, it is not an element of $\mathcal{E}$ but is in $\overline{\mathcal{E}}$. Note $\pm \infty \notin \overline{\cal E}$. Also set $\overline{\cal R} = \{(y,z) \in {\cal E}^2: y \leq z\}$; in contrast to ${\cal R}$, the set $\overline{\cal R}$ includes orders of size $0$. Notice the subtle distinction between $\overline{\cal E}$ which includes boundaries that are finite and natural and $\overline{\cal R}$ which does not allow either coordinate to be such a point.  


The random yields are determined by the family ${\cal Q} = \{Q(\cdot\,;y,z) : (y,z) \in \ovl{\cal R}\}$ of probability measures parametrized by $(y,z)\in \ovl{\cal R}$ such that (i) $Q(\,\cdot\,;y,z)$ is a probability measure for each $(y,z)\in \ovl{\cal R}$ and (ii) for each $E\in \B(\E)$, $(y,z) \to Q(E;y,z)$ is measurable.  $Q$ is a transition function on $\E\times \ovl{\cal R}$.  The probability measure  $Q(\cdot;  y,z)$ is the distribution for the resulting inventory level following an order of size $z-y$.  We further impose  support, continuity and supply requirements on this family.
\begin{cnd} \label{Q-cnds}
The collection ${\cal Q}$ satisfies
\begin{description}
\item[(a)] 
\begin{itemize}
\item[(i)] for each $y \in {\cal E}$, $Q(\cdot\,;y,y) = \delta_{\{y\}}(\cdot)$;
\item[(ii)] for each $(y,z) \in {\cal R}$, supp$(Q(\cdot\,;y,z)) \subset (y,z]$; 
\end{itemize}
\item[(b)] for each $(y,z) \in \ovl{\cal R}$, for  any sequence $\{(y_n,z_n)\in \ovl{\cal R}:n\in \NN\}$ with $y_n\rightarrow y$ and $z_n\rightarrow z$ as $n\rightarrow \infty$, the measures $Q(\cdot;y_n,z_n)$ converge weakly to $Q(\cdot;y,z)$ as $n\rightarrow \infty$; this weak convergence is denoted by $Q(\cdot\,;y_n,z_n) \Rightarrow Q(\cdot\,;y,z)$; and
\item[(c)] 
when $b$ is a natural boundary, for each $[d_1,d_2]\subset \I$, there exists  a $\delta > 0$ so that for each $\wdt{z}_1$ with $d_2 < \wdt{z}_1 < b$,
\setcounter{equation}{0}
\begin{equation} \label{e:new-VIP}
\liminf_{z\to b} \inf_{y\in [d_1,d_2]} Q((\wdt{z}_1,b);y,z) \geq \delta.
\end{equation}
\end{description}
\end{cnd}




Condition~(a,i) indicates that an active order of nominal size $0$ will not change the inventory level.  Condition~(a,ii) implies the existence of a {\em Minimal Delivery Guarranty} (MDG) that, with probability $1$, assures a fixed positive amount (up to the amount ordered) will be delivered when a positive nominal amount is ordered.  This condition is essential to show that each admissible policy, including $(s,S)$ policies, has a valid mathematical model for random supplies (cf.~Definition~2.3 and following comments in \cite{helm:19}).  The fact that this kind of a condition needs to be imposed on inventory models with random supply to have a proper mathematical model of the controlled process has been overlooked in the literature.  Condition~(b) requires continuity of the mapping $Q$ in the topology of weak convergence.  Condition~(c) is an {\em Assured Supply Commitment}\/ (ASC) that can be interpreted to be a ``V(ery) I(mportant) C(ustomer)'' condition in the sense that a customer who nominally orders to very high levels of inventory has a significant likelihood of receiving almost all of his order.  This condition is used to establish the existence of an optimizer in \thmref{F-optimizers}  
and to establish the optimality of a nominal $(s,S)$ policy in Section~\ref{sect:optim}. 

We illustrate how \cndref{Q-cnds} may be satisfied when $b=\infty$, a natural boundary.  For fixed $0 < \Delta < 1$, let $\wdt{Q} \in {\cal P}[\Delta,1]$ be fixed.  For $(y,z) \in {\cal R}$, let $f_{(y,z)}: [0,1] \rightarrow {\cal E}$ be the linear mapping with $f_{(y,z)}(0)=y$ and $f_{(y,z)}(1)=z$.  Then the family ${\cal Q}$ defined for $(y,z) \in {\cal R}$ by $Q(\cdot\,;y,z) = \wdt{Q}f_{(y,z)}^{-1}(\cdot)$ always satisfies \cndref{Q-cnds}.  A special case of this family occurs when $\wdt{Q}$ is the uniform distribution on $[\Delta,1]$, resulting in a continuous review inventory model with {\em nearly stochastically proportional yields}.  A second special case having $\wdt{Q}(\cdot)=\delta_{\{1\}}(\cdot)$ corresponds to the slack being $0$ and therefore models non-deficient supply.  { Further examples will be examined in Section~\ref{sect:examples}, for example when $b$ is a finite natural boundary.} 

It will be important throughout the manuscript to average functions using transition functions.  For a measurable function $\ell$ on $\ovl{\cal R}$ and a transition function $Q$, we adopt the shorthand notation
\begin{equation} \label{hat-def}
\wdh \ell(y,z): = \int \ell(y,v) Q(dv; y,z), \qquad (y,z) \in \ovl{\cal R},
\end{equation}
with the understanding that the integral exists in $\ovl{\RR}$.

Turning to the cost functions, we impose the following standing assumptions throughout the paper.
\begin{cnd} \label{cost-cnds}
\begin{itemize}
\item[(a)] The holding/back-order cost function $c_0: {\mathcal  I} \rightarrow \R^+$ is  continuous.  Moreover, at the boundaries 
$$\lim_{x\rightarrow a} c_0(x) =: c_0(a) \mbox{  exists in $\overline{\R^+}$ and } \lim_{x\rightarrow b} c_0(x) =: c_0(b) \mbox{  exists in $\overline{\R^+}$;}$$ 
we require $c_0(\pm\infty) = \infty$.  Finally, for each $y \in {\cal I}$,
\begin{equation} \label{c0-M-integrable}
\int_y^b c_0(v)\, \, dM(v)  < \infty.
\end{equation}
\item[(b)] The function $c_1:\overline{\cal R} \rightarrow \overline{\R^+}$ is in $C(\overline{\mathcal R})$ with $c_1 \geq k_1 > 0$ for some constant $k_1$.
\end{itemize}
\end{cnd}

The function $c_1$ is the building block for more complex cost structures of models with random supplies. For example, in the case when the decision maker ``pays for what he orders'' the ordering cost function is $c_1$ itself.  When the cost stucture is ``you pay for what you get'', the function $\hat{c}_1$ is used. For the remainder of the main sections, we analyze the inventory problem using $\hat{c}_1$, i.e. we pay for what we get; see also the following subsection. 

We adapt to this inventory application the model constructed in \cite{helm:19} for impulse-controlled processes having processes that are continuous between impulses.  The model is built on an augmentation of the space $D_\E[0,\infty)$ of \cadlag paths from $[0,\infty)$ to $\E$ using the natural filtration $\{{\cal F}_t\}$ in which $X$ is the coordinate process and ${\cal F}_t = \sigma(X(s): 0 \leq s \leq t)$. 

We now define a nominal ordering policy.  In order to do so, we need to specify the filtration of information used by the decision maker to determine the jump-from locations and the nominal jump-to-locations of a policy.  Let $\{{\cal F}_{t-}\}$ be given by ${\cal F}_{t-} = \sigma(X(s): 0 \leq s < t)$ for $t> 0$ with ${\cal F}_{0-}=\sigma(X(0-))$ being the $\sigma$-algebra generated by the inventory level prior to any intervention at time $0$.  It is also important to specify the $\sigma$-algebra of information available prior to a stopping time.  Let $\eta$ be an $\{{\cal F}_{t-}\}$-stopping time.  The $\sigma$-algebra ${\cal F}_{\eta-} := \sigma(\{A\cap \{\eta > t\}:  A\in {\cal F}_t, t\geq 0\})$. 

For the inventory management problem with random supply, the class ${\cal A}$ of admissible nominal ordering policies $(\tau,Z) =\{(\tau_{k}, Z_{k}), k\in \mathbb N \}$ is defined as follows:
\begin{description}
\item[(i)] $\{\tau_k:k\in \NN\}$ is a {\em strictly increasing} sequence of $\{{\cal F}_{t-}\}$-stopping times with $\tau_k \to \infty$; 
\item[(ii)] for each $k \in \NN$, $Z_k \in \E$ is ${\cal F}_{\tau_{k}-}$-measurable with $Z_k > X(\tau_k-)$; and 
\item[(iii)] the cost \eqref{lta-obj-fn} is finite and is denoted by $J(\tau,Z)$; note the inclusion of the policy in the notation. 
\end{description}
The requirement that the sequence $\{\tau_k\}$ be strictly increasing implies that at most one order can be placed at any time while the use of $\{{\cal F}_{t-}\}$ prevents the ordering decisions from knowing the supplied amount when an order is placed.  The random variable $Z_k$ in (ii) is the nominal order-to location so is the value $X(\tau_k-)+O_k$ when $O_k$ denotes the nominal order size.  The construction in \cite{helm:19} uses the measure $Q(\cdot;X(\tau_k-),Z_k)$ to select the actual random supply inventory level $X(\tau_k)$ at time $\tau_k$.  Hence the corresponding random slack is $\Theta_k = Z_k - X(\tau_k)$.

Thus, given the transition functions ${\cal Q}$ and an admissible policy in the class ${\cal A}$, the associated inventory process $X$ will be a jump-diffusion process characterized by the generator of the process $X_0$, the jump operator determined by the the decision of ordering up to a nominal level $z$ and the transition function $Q$.  

Looking at the infinitesimal behavior, the generator of the process $X$ between jumps (corresponding to the diffusion $X_0$) is $Af = \mbox{$\frac{\sigma^2}{2}$} f'' + \mu f'$, which is defined for all $f \in C^2({\mathcal I})$; equivalently, $Af = \frac{1}{2} \frac{d~}{dM} \left(\frac{df}{dS}\right)$. The effects that ordering and random yields have on the inventory process and its expected cost will be defined by the jump operator $B: C({\cal E}) \rightarrow C(\overline{\mathcal{R}})$, $Bf(y,z) := f(z) - f(y)$ for $(y,z) \in \overline{\cal R}$ for an order with non-deficient supply having transition function $Q(\cdot\,;y,z)=\delta_{z}(\cdot)$, and for the case of random yield by the $\hat{~}$-operation $\hat{Bf}(y,z) := \,  \int Bf(y,v)\, Q(dv;y,z)$ when the order-from location is $y$ and action $z$ selects a transition function $Q(\cdot\,;y,z)$.  

\subsection{Important functions} \label{important-fns}

As in \cite{helm:18}, the following two functions play a central role in our search for an optimal ordering policy.  Recall, $M$ denotes the speed measure and $S$ represents the scale measure.  Using the initial position $x_0\in \mathcal{I}$, define the functions $g_0$ and $\zeta$ on $\I$ by
\begin{align} \label{g0-fn}
  g_{0}(x) := \int_{x_0}^{x} \int_{u}^{b} 2 c_{0}(v)\, dM(v)\, dS(u) \qquad \text{and} \qquad
\zeta(x) := \int_{x_0}^{x} \int_{u}^{b} 2\, dM(v)\, dS(u) ,
\end{align}
and extend these functions to $\overline{\cal E}$ by continuity.  Observe that both $g_0$ and $\zeta$ are negative on $(a,x_0)$ and positive on $(x_0,b)$; also $g_0$ may take values $\pm \infty$ at the boundaries while $\zeta$ is $\pm \infty$ for natural boundaries. Using the second characterization of $A$, it immediately follows that $g_0$ and $\zeta$, respectively, are particular solutions on $\mathcal{I}$ of
\begin{equation} \label{inhom-eq}
\left\{\begin{array}{l}
Af = - c_0, \\
f(x_0) = 0,
\end{array} \right. \quad \mbox{ and } \quad \left\{\begin{array}{l}
Af = -1, \\
f(x_0) = 0.
\end{array}\right.
\end{equation}
Other solutions to these differential equations having value $0$ at $x_0$ include summands of the form $K(S(x) - S(x_0))$, $K \in \RR$, since the constant function and the scale function $S$ are linearly independent solutions of the homogeneous equation $Af = 0$.  However, such additional terms grow too quickly near the boundary $b$ so that the transversality condition \eqref{G0-transv-cnd} in \propref{sS-in-Q2} below fails (see Remark~4.2 of \cite{helm:18}) and therefore the definitions of $g_0$ and $\zeta$ in \eqref{g0-fn} exclude these terms.

To gain some intuition for the functions $g_0$ and $\zeta$, let $y, v \in {\cal E}, y < v$, and let $X_0$ satisfy \eqref{dyn} with $X_0(0) = v$. 
Define $\tau_{v,y} := \inf\{t \geq 0: X_0(t) = y\}$. 
Then, Proposition~2.6 in \cite{helm:15b} shows that 
$$\EE_v\left[\int_0^{\tau_{v,y}} c_0(X_0(s))\, ds\right] =  {B}g_0(y,v),\qquad \mbox{and}\qquad \EE_v[\tau_{v,y}] = {B}\zeta(y,v),$$ 
and a simple extension establishes that if $X_0(0) \sim Q(\cdot \ ;y,z)$, for $(y,z) \in {\cal R}$, then 
$$\int \EE_v\left[\int_0^{\tau_{v,y}} c_0(X_0(s))\, ds\right]Q(dv;y,z) = \hat{Bg_0}(y,z),\,\,\, \mbox{and}\,\,\, \int \EE_v[\tau_{v,y}]Q(dv;y,z)= \hat{B\zeta}(y,z).$$

The proof of our basic existence result, \thmref{F-optimizers}, relies on the asymptotic behavior of the functions $c_0$, $g_0$ and $\zeta$  when the boundaries are natural.
The following lemma, whose proof can be found in Lemma~2.1 of \cite{helm:18}, summarizes such asymptotic behavior.
 
\begin{lem} \label{g0-at-a}
Assume \cndref{diff-cnd}.  Suppose $a$ and $b$ are natural boundaries and let $c_0(a)$ and $c_0(b)$ be as in \cndref{cost-cnds}(a).  Then the following asymptotic behaviours hold:
\begin{align} \label{z-fixed-g0-zeta-diff-at-a}
& \lim_{y\rightarrow a} \frac{Bg_0(y,v)}{B\zeta(y,v)} = c_0(a), \quad \forall v\in \mathcal{I}; \  & & \lim_{v\rightarrow b} \frac{Bg_0(y,v)}{B\zeta(y,v)}  = c_0(b), \quad \forall y\in \mathcal{I}; & \\
\label{double-g0-zeta-diff-at-a}
& \lim_{(y,v)\rightarrow (a,a)} \frac{Bg_0(y,v)}{B\zeta(y,v)} = c_0(a); \quad & & \lim_{(y,v) \rightarrow (b,b)} \frac{Bg_0(y,v)}{B\zeta(y,v)}   = c_0(b); & \\
\label{g0-zeta-ratio-at-a}
& \lim_{y\rightarrow a} \frac{g_0(y)}{\zeta(y)} = c_0(a); \quad & &  \lim_{v\rightarrow b} \frac{g_0(v)}{\zeta(v)}   = c_0(b),& 
\end{align}
implying $\lim_{y\rightarrow a} g_0(y) = -\infty$ when $c_0(a) > 0$ and $\lim_{v\rightarrow b} g_0(v) = \infty$ when $c_0(b) > 0$.
\end{lem}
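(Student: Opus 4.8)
The plan is to work directly from the integral representations in \eqref{g0-fn}. Writing things out, for $y<v$ we have
\[
Bg_0(y,v)=g_0(v)-g_0(y)=\int_y^v\!\!\int_u^b 2c_0(w)\,dM(w)\,dS(u),
\qquad
B\zeta(y,v)=\int_y^v\!\!\int_u^b 2\,dM(w)\,dS(u),
\]
so the ratio $Bg_0(y,v)/B\zeta(y,v)$ is a weighted average of the ``inner ratios'' $r(u):=\dfrac{\int_u^b c_0(w)\,dM(w)}{\int_u^b dM(w)}$ against the $S$-measure $dS(u)$ on $[y,v]$. The first observation is that $r(u)$ is itself an $M$-weighted average of $c_0$ over $[u,b)$, hence is sandwiched between $\inf_{[u,b)}c_0$ and $\sup_{[u,b)}c_0$; continuity of $c_0$ together with the existence of the boundary limits $c_0(a),c_0(b)$ in \cndref{cost-cnds}(a) then forces $r(u)\to c_0(b)$ as $u\to b$. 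For the second limit in \eqref{z-fixed-g0-zeta-diff-at-a} I would fix $y$ and let $v\to b$: since the contribution of any fixed compact $[y,v_0]$ to both numerator and denominator stays bounded while $B\zeta(y,v)\to\infty$ (as $b$ is natural, $\zeta(b)=+\infty$), the averaging weight mass escapes to $b$, and the average of $r(u)$ over $[y,v]$ converges to $\lim_{u\to b}r(u)=c_0(b)$. The symmetric argument at $a$ gives the first limit: fix $v$, let $y\to a$; here one uses that $a$ natural makes $\zeta(a)=-\infty$ so again the weight escapes to the boundary, and one shows $r(u)\to c_0(a)$ as $u\to a$, which is where \cndref{cost-cnds}(a) (and the finiteness \eqref{c0-M-integrable}) is needed to guarantee $r$ is well-defined near $a$ and has the right limit.

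For the double limits \eqref{double-g0-zeta-diff-at-a} I would argue that whichever way $(y,v)\to(a,a)$ (respectively $(b,b)$), the interval $[y,v]$ is eventually contained in a small neighbourhood of $a$ (resp.\ $b$) on which $r(u)$ differs from $c_0(a)$ (resp.\ $c_0(b)$) by at most $\varepsilon$; since $Bg_0/B\zeta$ is a convex combination of the values $r(u)$, $u\in[y,v]$, it too lies within $\varepsilon$ of the boundary value. This is essentially a uniform-continuity/squeeze argument and needs no escape-to-infinity, just the one-sided limits of $r$. The limits \eqref{g0-zeta-ratio-at-a} for the ratio $g_0/\zeta$ itself are then a corollary: $g_0(v)/\zeta(v)=\big(g_0(v)-g_0(x_0)\big)/\big(\zeta(v)-\zeta(x_0)\big)=Bg_0(x_0,v)/B\zeta(x_0,v)$ (using $g_0(x_0)=\zeta(x_0)=0$), so the $v\to b$ case is just the second limit of \eqref{z-fixed-g0-zeta-diff-at-a} with $y=x_0$; likewise the $y\to a$ case follows from the first limit with $v=x_0$ after noting $g_0(y)/\zeta(y)=Bg_0(y,x_0)/B\zeta(y,x_0)$. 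Finally, the concluding assertion that $g_0(v)\to+\infty$ when $c_0(b)>0$ follows since $\zeta(v)\to+\infty$ and $g_0(v)/\zeta(v)\to c_0(b)>0$; symmetrically $g_0(y)\to-\infty$ when $c_0(a)>0$.

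The main obstacle I anticipate is the bookkeeping needed to show the averaging weight genuinely escapes to the boundary — i.e.\ that the fixed-compact contribution is negligible relative to the whole. This hinges on $\zeta$ being infinite at natural boundaries (\cndref{diff-cnd}(b) and the definition of natural), and one must be a little careful distinguishing the various boundary types and, at $b$, using \eqref{c0-M-integrable} to keep $\int_u^b c_0\,dM$ finite so that $r(u)$ is a genuine finite ratio; the $c_0(b)=+\infty$ case has to be handled by a separate truncation/lower-bound argument. Since the full details are carried out in Lemma~2.1 of \cite{helm:18}, I would present the above as the structure and refer there for the estimates.
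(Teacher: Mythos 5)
Your plan is correct and is essentially the intended argument: the paper gives no proof of this lemma itself, deferring entirely to Lemma~2.1 of \cite{helm:18}, and your decomposition of $Bg_0/B\zeta$ as a weighted average of the inner ratios $r(u)=\int_u^b c_0\,dM/M[u,b)$, combined with the escape-of-mass argument driven by $\zeta(a)=-\infty$ and $\zeta(b)=+\infty$ at natural boundaries, is exactly the route taken there. Two small points to tighten: the weight in your average is $2M[u,b)\,dS(u)$ rather than $dS(u)$ (harmless, since you only use that the ratio is a convex combination of the values $r(u)$, $u\in[y,v]$), and the limit $r(u)\to c_0(a)$ as $u\to a$ rests on the fact that $M(a,x]=\infty$ for an attracting natural boundary $a$, so the $M$-mass of $[u,b)$ concentrates near $a$ and swamps the finite tail $\int_{y_0}^b c_0\,dM$ guaranteed by \eqref{c0-M-integrable}.
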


Another function of importance to the solution of the problem is $\wdh{c}_1$, which we remind the reader is defined to be $\wdh{c}_1(y,z)=\int c_1(y,v) Q(dv;y,z)$,  where $(y,z) \in \overline{\cal R}$.  The first proposition indicates a difference between the properties of the ordering cost structure of the random supply model and the model with non-deficient deliveries.

\begin{prop} \label{c1hat-lsc}
Assume Conditions \ref{diff-cnd} - \ref{cost-cnds}.  Then $\wdh{c}_1$ is lower semicontinuous.
\end{prop}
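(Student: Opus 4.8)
The plan is to prove lower semicontinuity of $\wdh{c}_1$ directly from the definition, using the weak convergence of the yield measures guaranteed by \cndref{Q-cnds}(b). Fix $(y,z) \in \ovl{\cal R}$ and an arbitrary sequence $\{(y_n,z_n)\} \subset \ovl{\cal R}$ with $(y_n,z_n) \to (y,z)$; I want to show $\liminf_n \wdh{c}_1(y_n,z_n) \geq \wdh{c}_1(y,z)$. Recall $\wdh{c}_1(y_n,z_n) = \int c_1(y_n,v)\, Q(dv;y_n,z_n)$, so there is a double dependence on $n$: one through the integrand $c_1(y_n,\cdot)$ and one through the measure $Q(\cdot;y_n,z_n)$. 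The strategy is to separate these two effects and then invoke the portmanteau theorem for weakly converging measures.

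First I would handle the integrand. Since $c_1 \in C(\ovl{\cal R})$ by \cndref{cost-cnds}(b), and since we only integrate over $v$ in the (compact) support-closure region $[y_n,z_n]$ (by \cndref{Q-cnds}(a,ii)), which converges to $[y,z]$, the functions $v \mapsto c_1(y_n,v)$ converge to $v \mapsto c_1(y,v)$ uniformly on a fixed compact neighborhood of $[y,z]$ in $\ovl{\cal E}$: this follows from continuity of $c_1$ on the compact set obtained by taking the closure of a bounded neighborhood of $\{(y,v): y\le v, v\in [y,z]\}$. (A mild point to check: if a boundary of $\cal E$ is infinite we restrict attention to a bounded piece, using that all the relevant order-to and order-from coordinates stay bounded along a convergent sequence; $c_1$ may take the value $+\infty$ only at finite natural boundaries, which are excluded from $\ovl{\cal R}$, so $c_1$ is genuinely finite and continuous near $(y,z)$.) Write $\wdh{c}_1(y_n,z_n) = \int c_1(y,v)\, Q(dv;y_n,z_n) + \int \bigl(c_1(y_n,v) - c_1(y,v)\bigr)\, Q(dv;y_n,z_n)$; the second term tends to $0$ by the uniform convergence just described together with the fact that each $Q(\cdot;y_n,z_n)$ is a probability measure.

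Second, for the first term I would apply the portmanteau characterization of weak convergence: since $Q(\cdot;y_n,z_n) \Rightarrow Q(\cdot;y,z)$ by \cndref{Q-cnds}(b) and $c_1(y,\cdot)$ is nonnegative and lower semicontinuous (indeed continuous) on $\ovl{\cal E}$, we get $\liminf_n \int c_1(y,v)\, Q(dv;y_n,z_n) \geq \int c_1(y,v)\, Q(dv;y,z) = \wdh{c}_1(y,z)$. Combining the two steps yields $\liminf_n \wdh{c}_1(y_n,z_n) \geq \wdh{c}_1(y,z)$, which is exactly lower semicontinuity of $\wdh{c}_1$ at $(y,z)$; since $(y,z)$ was arbitrary, the proof is complete.

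The main obstacle, and the reason the result is only semicontinuity rather than continuity, is precisely the one-sidedness introduced by \cndref{Q-cnds}(a,ii): the support of $Q(\cdot;y,z)$ is contained in the half-open interval $(y,z]$, so as $(y_n,z_n)\to(y,z)$ the endpoint $v = y_n$ can be approached by the supports but is itself excluded, and mass can ``escape'' toward $y$ in the limit. Weak convergence only controls $\liminf$ of integrals of nonnegative lsc functions against such measures, not the full limit; trying to get an upper bound $\limsup_n \wdh{c}_1(y_n,z_n) \le \wdh{c}_1(y,z)$ would require either continuity of $v\mapsto c_1(y,v)$ as a bounded function together with a tightness/no-escape-of-mass argument, which the present hypotheses do not supply. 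So the careful part of the write-up is to isolate that the only thing we need from the measures is the portmanteau inequality for nonnegative lsc integrands, and to verify cleanly that the integrand perturbation term $\int (c_1(y_n,v)-c_1(y,v))\,Q(dv;y_n,z_n)$ is negligible via uniform continuity of $c_1$ on an appropriate compact set.
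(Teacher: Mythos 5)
Your argument is sound in outline and reaches the same inequality as the paper, but by a more elementary route: you split $\wdh{c}_1(y_n,z_n)$ into a fixed-integrand term handled by the portmanteau theorem and a perturbation term killed by uniform continuity of $c_1$ on a compact set, whereas the paper feeds the varying integrands directly into Lemma~2.1 of \cite{serf:82} (a convergence result for $\int f_n\,d\mu_n$ with both the functions and the measures varying), verifying its level-set hypothesis $\{f>t+\epsilon\}\subset\{f_n>t\}$ by the same uniform continuity. What the paper's write-up makes explicit, and what your sketch still needs, are two devices. First, your integral $\int c_1(y,v)\,Q(dv;y_n,z_n)$ is not defined as written: when $y_n<y$ the measure $Q(\cdot\,;y_n,z_n)$ may charge $(y_n,y)$, where $(y,v)\notin\ovl{\cal R}$ and $c_1(y,\cdot)$ has no meaning. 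The functions $f$ and $f_n$ of \eqref{eq-f-fn}, which freeze $c_1(y,\cdot)$ at the value $c_1(y,y)$ below the diagonal, are exactly the continuous extensions your decomposition requires; with $f$ in place of $c_1(y,\cdot)$ both of your steps go through, and $\int f\,dQ=\wdh{c}_1(y,z)$ because $\mathrm{supp}\,Q(\cdot\,;y,z)\subset(y,z]$. Second, your parenthetical claim that $c_1$ is ``genuinely finite'' near $(y,z)$ is not warranted by \cndref{cost-cnds}(b), which only requires $c_1\in C(\ovl{\cal R})$ with values in $\ovl{\R^+}$; nothing excludes $c_1=+\infty$ at points of $\ovl{\cal R}$, and there your uniform-continuity step (indeed the very difference $c_1(y_n,v)-c_1(y,v)$) breaks down. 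The paper's opening reduction --- truncate $c_1$, prove the inequality for the truncation, then pass to the limit by monotone convergence --- is the one-line repair. With those two patches your proof is complete and is, if anything, more self-contained than the published one, since it avoids the appeal to the external lemma.
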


\begin{proof}
We need to show that for every $(y,z) \in {\overline{\cal R}}$ and every sequence $\{(y_n,z_n): n\in\NN\}$ in ${\overline {\cal R}}$ which converges to $(y,z)$,
\begin{equation}\label{eq-c1hat-lsc}
 \wdh{c}_1(y,z)   \le  \liminf_{n\rightarrow \infty} \wdh{c}_1(y_n,z_n).
\end{equation} 
We may assume that the function $c_1$ is bounded; the monotone convergence theorem implies the inequality \eqref{eq-c1hat-lsc} for unbounded cost functions once it has been established for a truncated form of  $c_1$.
To verify \eqref{eq-c1hat-lsc} we shall rely on the elementary but most useful Lemma~2.1 in \cite{serf:82}. In the sequel, we verify the hypothesis of this lemma. To this end, for the given pair $(y,z)$ and the points $y_n, n \in \NN$, we define nonnegative continuous functions $f$ and $f_n$ on $\cal E$ as follows. For $v \in \cal E$,  let
\begin{equation}\label{eq-f-fn}
f(v) := \left\{\begin{array}{cl}\displaystyle 
{c_1}(y,v), & \quad v \ge y, \rule[-15pt]{0pt}{15pt}\\
{c_1}(y,y), & \quad v \le y; 
\end{array} \right.
 \text{and} \quad
f_n(v) := \left\{\begin{array}{cl}\displaystyle 
{c_1}(y_n,v), & \quad v \ge y_n, \rule[-15pt]{0pt}{15pt}\\
{c_1}(y_n,y_n), & \quad v \le y_n. 
\end{array} \right.
\end{equation}
For the remainder of this proof, we simplify notation by setting 
\begin{equation}\label{eq-nu-nun}
Q(\cdot\,) := Q(\cdot\,;y,z), \quad \text{and}\quad  Q_n(\cdot\,) := Q(\cdot\,;y_n,z_n).
\end{equation}
Since $f$ is continuous, for every $t \in {\RR}$ and $\epsilon > 0$ the set  $\{v \in {\cal{E}}: f(v) > t+{\epsilon}\}$ is an open set.  Moreover, $c_1$ is uniformly continuous on any compact subset in $\ovl{\cal R}$.  Hence, for sufficiently large $n$, $v\in\{f>t+\epsilon\}$ implies $v\in\{f_n > t\}$. By Condition \ref{Q-cnds}(b), the measures $Q_n$ converge weakly to $Q$  on $\cal{E}$ and thus by the Portmanteau Theorem (cf. Theorem~3.3.1 on p.\,108 of \cite{ethi:86}) for the first inequality below and the inclusion $\{f>t+\epsilon\}\subset \{f_n>t\}$ for $n$ sufficiently large for the second inequality,
\begin{equation}                 
Q(\{ f > t+\epsilon \}) \leq \liminf_{n\rightarrow \infty} Q_n(\{ f> t+\epsilon \}) \leq \liminf_{n\rightarrow \infty} Q_n(\{f_n> t \}).
\end{equation}
Since $\epsilon$ is arbitrary, the hypothesis of  Lemma~2.1 in \cite{serf:82} is satisfied and it therefore follows that 
$$\int{f(v)Q(dv)}  \le \liminf_{n\rightarrow \infty}\int{f_n(v)Q_n(dv)}.$$
By \cndref{Q-cnds}(a) and the notation \eqref{eq-nu-nun}, $Q(\cdot)$ has its support in $(y,z]$ and similarly for $Q_n(\cdot)$.  Therefore 
$$\wdh{c}_1(y,z) = \int{f(v)Q(dv)}  \qquad \mbox{and} \qquad \wdh{c}_1(y_n,z_n) = \int{f_n(v)Q_n(dv)}$$
implying that \eqref{eq-c1hat-lsc} holds true.
\end{proof}



\subsection{Analysis of nominal $(s,S)$ Ordering Policies}\label{sect-sS-policy}
Both this paper and \cite{helm:18} rely on characterizing the long-term average cost for $(s,S)$-ordering policies in the cases of deficient supplies or of full supplies using a renewal reward theorem.  For $(y,z) \in {\cal R}$, define the nominal $(y,z)$-ordering policy $(\tau,Z)$ such that $\tau_0=0$ and
\begin{equation} \label{sS-tau-def}
\tau_k = \inf\{t > \tau_{k-1}: X(t-) \leq y\}, \quad \mbox{ and } \quad Z_k = z,
\quad  k \geq 1,
\end{equation}
in which $X$ is the inventory level process satisfying \eqref{controlled-dyn} with this ordering policy. The above definition of $\tau_k$ must be slightly modified when $k=1$ to be $\tau_1 = \inf\{t \geq 0: X(t-) \leq y\}$ to allow for the first jump to occur at time $0$ when $x_0 \leq y$.  Observe that $X$ is a delayed renewal process since the single distribution $Q(\cdot\,;,y,z)$ is used to determine the random supply for all orders $k \geq 2$; it is a renewal process when $y \leq x_0$.  We note that the definition of $\tau_k$ in \eqref{sS-tau-def} needs to be more precisely stated as in Section 6 of \cite{helm:19} due to the particular construction of the mathematical model.  However, the definition in \eqref{sS-tau-def} provides the correct intuition so we rely on this simpler statement of the intervention times.

Theorem~2.1 of \cite{sigm:93} provides existence and uniqueness of the stationary distribution for the process $X$ arising from a nominal $(y,z)$-ordering policy for any $(y,z) \in {\cal R}$ and moreover, the one-dimensional distributions $\PP(X(t) \in \cdot)$ converge weakly to the stationary distribution as $t$ tends to infinity.  A straightforward generalization of Proposition~3.1 of \cite{helm:15b} characterizes the density $\pi$ of the stationary distribution for $X$ and the long-run frequency $\hat\kappa = \frac{1}{\hat{B\zeta}(y,z)\rule{0pt}{10pt}}$ of orders.  

By renewal theory, the long-term average running cost 
for the nominal $(y,z)$-ordering policy, cf. \eqref{sS-tau-def}, equals:  
\begin{equation}\label{eq-c0-integral-slln}
\lim_{t\rightarrow \infty} \frac{1}{t} \int_{0}^{t} c_{0}(X(s))ds = \frac{\hat{Bg_0}(y,z)}{\hat{B\zeta}(y,z)} \;\;(a.s. \mbox{ and in } L^1),
\end{equation} 
and therefore the long-term average cost $J(\tau,Z)$ of \eqref{lta-obj-fn} is given by
\begin{equation}\label{eq-sS-cost}
J(\tau,Z) = \frac{\hat{c}_{1}(y,z) + \hat{Bg_{0}}(y,z)}{\hat{B\zeta}(y,z)}.
\end{equation}

Motivated by \eqref{eq-sS-cost}, define the function ${H}_0: \overline{\cal R} \rightarrow \overline{\RR^+}$ by
\begin{equation} \label{eq-Fhat-fn}
{H}_0(y,z) := \left\{\begin{array}{cl}\displaystyle 
\frac{\hat{c}_1(y,z) + \hat{Bg_0} (y,z)} {\hat{B\zeta}(y,z)}, & \quad (y,z) \in {\mathcal R}, \rule[-15pt]{0pt}{15pt}\\
\infty, & \quad (y,y) \in \overline{\cal R}.
\end{array} \right.
\end{equation}
$H_0$ is an adaptation of the function $F_0$ in \cite{helm:18} to the case of random yields. 
Recall that $Q(\,\cdot\,;y,z)$ has its support in $(y,z]$ and the collection is weakly convergent.  Since $g_0$ and $\zeta$ are continuous, it follows that $\wdh{Bg_0}$ and $\wdh{B\zeta}$ are also continuous, as well as being nonnegative.  Therefore ${H}_0$ is lower semicontinuous on $\overline{\mathcal{R}}$ due to \propref{c1hat-lsc}. 

Similar to the case of non-deficient deliveries, our goal is to minimize ${H}_0$.  Since $c_1 > 0$, and hence $\hat{c}_1$ is positive, ${H}_0(y,z) > 0$ for every $(y,z) \in \overline{\mathcal{R}}$. Thus, $\inf_{(y,z)\in \overline{\mathcal{R}}} {H}_0(y,z) =: {H}_0^* \geq 0$.  The models with a natural boundary allow ${H}_0^* = 0$ as a limit as the appropriate coordinate approaches the boundary point, in which case it immediately follows that there is no minimizing pair $(\yzstar,\zzstar)$ of ${H}_0$.  The imposition of \cndref{extra-cnd} below eliminates the possibility that $H_0^*=0$.

It is helpful to define a family $\{\mathfrak{P}(\cdot;y,z): (y,z)\in {\cal R}\}$ of probability measures on $\E$ as follows:
$$\mathfrak{P}(\Gamma;y,z) = \int_\Gamma B\zeta(y,v) \mbox{$\frac{1}{\wdh{B\zeta}(y,z)\rule{0pt}{10pt}}$}\, Q(dv;y,z), \qquad \Gamma \in \B(\E).$$
Note that the value $\mathfrak{P}(\Gamma;y,z)$ gives the proportion of the expected cycle length $\wdh{B\zeta}(y,z)$ due to the random effect distribution $Q(\,\cdot\,;y,z)$ delivering to inventory levels $v\in \Gamma$ following the order.  Also observe that $\mathfrak{P}(\,\cdot\,;y,z)$ inherits its support from $Q(\,\cdot\,;y,z)$.

The next result shows that the infimum $F_0^*$ of the function $F_0$ in \cite{helm:18}, see \eqref{eq-F-fn} below, is a lower bound for the value $ H_{0}^{*}$.  The function $F_0$ gives the long-term average cost of a $(y,z)$ policy for non-deficient supply models.

\begin{prop} \label{inf_comparison}
Assume Conditions \ref{diff-cnd} - \ref{cost-cnds}. Define the function 
\begin{equation} \label{eq-F-fn}
F_0(y,z) := \left\{\begin{array}{cl}\displaystyle 
\frac{c_1(y,z) + Bg_0 (y,z)} {B\zeta(y,z)}, & \quad (y,z) \in {\mathcal R}, \rule[-15pt]{0pt}{15pt}\\
\infty, & \quad { (y,z) \in \overline{\cal R} \text{ with } y=z,}
\end{array} \right.
\end{equation}
and let $F_0^* = \inf_{(y,z)\in \overline{\cal R}} F_0(y,z)$.  Then $
 H_{0}^{*}\ge F_{0}^{*}$.
\end{prop}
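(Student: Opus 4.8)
The plan is to recognize that $H_0$ is nothing but the $\mathfrak{P}$-average of $F_0$, using the family $\{\mathfrak{P}(\cdot\,;y,z):(y,z)\in{\mathcal R}\}$ introduced immediately before the statement. Fix $(y,z)\in{\mathcal R}$. Using the shorthand \eqref{hat-def} together with the definition \eqref{eq-Fhat-fn} of $H_0$, I would write
\begin{align*}
H_0(y,z)&=\frac{\wdh{c}_1(y,z)+\wdh{Bg_0}(y,z)}{\wdh{B\zeta}(y,z)}
=\frac{\int\bigl(c_1(y,v)+Bg_0(y,v)\bigr)\,Q(dv;y,z)}{\wdh{B\zeta}(y,z)}\\
&=\int F_0(y,v)\,\mathfrak{P}(dv;y,z),
\end{align*}
where the last equality multiplies and divides the integrand by $B\zeta(y,v)$ and then invokes the definition of $\mathfrak{P}(\cdot\,;y,z)$. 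This manipulation is legitimate because $B\zeta(y,v)=\zeta(v)-\zeta(y)>0$ for every $v$ in the support of $Q(\cdot\,;y,z)$, which by \cndref{Q-cnds} is contained in $(y,z]$.

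Second, I would observe that for $\mathfrak{P}(\cdot\,;y,z)$-almost every $v$ one has $v>y$, hence $(y,v)\in{\mathcal R}$, so that $F_0(y,v)\ge F_0^*$ by the very definition of $F_0^*$ as the infimum of $F_0$ over $\ovl{\mathcal R}$. Since $\mathfrak{P}(\cdot\,;y,z)$ is a probability measure, integrating this pointwise bound gives $H_0(y,z)=\int F_0(y,v)\,\mathfrak{P}(dv;y,z)\ge F_0^*$. This holds for every $(y,z)\in{\mathcal R}$, while for $(y,y)\in\ovl{\mathcal R}$ one has $H_0(y,y)=\infty\ge F_0^*$ trivially. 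Taking the infimum over $(y,z)\in\ovl{\mathcal R}$ then yields $H_0^*\ge F_0^*$, as claimed.

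I do not anticipate a genuine analytic obstacle: the proposition is essentially the conceptual remark that a random-yield $(y,z)$-ordering policy executes, on each renewal cycle, a non-deficient $(y,v)$-policy for a random post-order level $v\in(y,z]$ with law $Q(\cdot\,;y,z)$, so its long-run cost \eqref{eq-sS-cost} is a weighted average of the long-run costs $F_0(y,v)$, none of which beats $F_0^*$. The only points that need care are bookkeeping ones — that $\wdh{B\zeta}(y,z)\in(0,\infty)$ for $(y,z)\in{\mathcal R}$ (which follows since $\zeta$ is finite and continuous on ${\mathcal E}$, $B\zeta(y,\cdot)$ is strictly positive on $(y,z]$, and $Q(\cdot\,;y,z)$ is a probability measure supported there, all already recorded above), and that the rearrangement of a ratio of integrals into an integral of a ratio is valid, which again reduces to the strict positivity of $B\zeta(y,v)$ on the support.
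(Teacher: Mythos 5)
Your proposal is correct and follows essentially the same route as the paper: rewrite $H_0(y,z)$ as $\int F_0(y,v)\,\mathfrak{P}(dv;y,z)$ by inserting the factor $B\zeta(y,v)/B\zeta(y,v)$, bound the integrand below by $F_0^*$, and use that $\mathfrak{P}(\cdot\,;y,z)$ is a probability measure before taking the infimum. The extra bookkeeping you supply about positivity of $B\zeta$ on the support is consistent with, and slightly more explicit than, the paper's argument.
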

\begin{proof}
Observe that the function ${H}_0$ defined by \eqref{eq-Fhat-fn} can also be written as 
\begin{equation} \label{eq2-Fhat-fn}
{H}_0(y,z) := \left\{\begin{array}{cl}\displaystyle 
\int \frac{c_1(y,v) + Bg_0 (y,v)} {\wdh{B\zeta}(y,z)}\, Q(dv;y,z), & \quad (y,z) \in {\mathcal R}, \rule[-15pt]{0pt}{15pt}\\
\infty, & \quad (y,z) \in \overline{\cal R} \text{ with } y=z. \rule{0pt}{14pt}
\end{array} \right.
\end{equation}
Using the factor $\frac{B\zeta(y,v)}{B\zeta(y,v)}=1$, the expression for $H_0$ when $y < z$ yields
$$H_0(y,z) = \int \frac{c_1(y,v) + Bg_0(y,v)}{B\zeta(y,v)}\, \mathfrak{P}(dv;y,z) = \int F_0(y,v)\, \mathfrak{P}(dv;y,z) \geq F_0^*.$$
Taking the infimum over $(y,z)\in {\cal R}$ therefore establishes the result. 
\end{proof}

Similarly as in \cite{helm:18}, our main optimality result depends on the existence of a minimizing pair $(\yzstar,\zzstar)\in {\cal R}$ of ${H}_0$.  An important subtlety is that properties of the function ${H}_0$ on compact subsets of ${\cal R}$ and close to the boundary of ${\cal R}$ are not simply determined by the properties of the functions $c_1$, $g_0$ and $\zeta$ in these regions as they were for non-deficient supply models. Actually, the behavior of the function ${H}_0$ near the boundary crucially depends on properties of the measure-valued transition functions $Q(\cdot\,;y,z)$ as functions on ${\cal R}$ and, in particular, on the behavior of the function $\wdh{B\zeta}$ near the boundary. As a consequence, a proof of a general optimality result of an $(s,S)$-policy for inventory models with random supply requires additional conditions.  Before presenting these conditions, however, we identify an important relation between \cndref{Q-cnds}(c) and the family of measures $\{\mathfrak{P}(\,\cdot\,;y,z)\}$.

\begin{lem}\label{lem-non-compact}
Let $b$ be a natural boundary for which Condition \ref{Q-cnds} (c) holds.  Then for each interval $[d_1,d_2] \subset \I$ and for every $\check{z}$ with $d_2 < \check{z} < b$, 
\begin{equation} \label{non-compact}
\lim_{z\to b}\, \inf_{y\in [d_1,d_2]} \mathfrak{P}((\check{z},b);y,z) = 1.
\end{equation}
\end{lem}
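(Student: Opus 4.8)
The plan is to reduce \eqref{non-compact} to \cndref{Q-cnds}(c) by controlling the Radon--Nikodym factor $B\zeta(y,v)/\wdh{B\zeta}(y,z)$ that relates $\mathfrak{P}(\cdot\,;y,z)$ to $Q(\cdot\,;y,z)$. Recall $\mathfrak{P}((\check z,b);y,z) = \int_{(\check z,b)} B\zeta(y,v)\,Q(dv;y,z)\big/\wdh{B\zeta}(y,z)$ and $\wdh{B\zeta}(y,z) = \int_{(y,z]} B\zeta(y,v)\,Q(dv;y,z)$, so the complementary proportion is $\mathfrak{P}((y,\check z];y,z) = \int_{(y,\check z]} B\zeta(y,v)\,Q(dv;y,z)\big/\wdh{B\zeta}(y,z)$. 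Thus it suffices to show this last ratio tends to $0$ uniformly in $y\in[d_1,d_2]$ as $z\to b$.

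First I would fix the interval $[d_1,d_2]\subset\I$ and choose $\widetilde z_1$ with $d_2 < \widetilde z_1 < b$, and then pick $\check z$ with $\widetilde z_1 < \check z < b$ (shrinking $\check z$ toward $b$ only helps, so there is no loss). On $(y,\check z]$ the integrand $B\zeta(y,v)$ is bounded: since $B\zeta(y,v) = \zeta(v) - \zeta(y)$ is continuous and increasing in $v$, and $y$ ranges over the compact set $[d_1,d_2]$, we have $\sup_{y\in[d_1,d_2]}\sup_{v\in(y,\check z]} B\zeta(y,v) \le \zeta(\check z) - \zeta(d_1) =: C_1 < \infty$ (here I use that $\check z < b$ so $\zeta(\check z)$ is finite even though $\zeta(b) = \infty$ at a natural boundary). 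Hence $\int_{(y,\check z]} B\zeta(y,v)\,Q(dv;y,z) \le C_1$ for all $y\in[d_1,d_2]$ and all $z$. For the denominator, restrict the defining integral of $\wdh{B\zeta}(y,z)$ to $(\widetilde z_1,b)$:
\begin{equation*}
\wdh{B\zeta}(y,z) \;\ge\; \int_{(\widetilde z_1,b)} B\zeta(y,v)\,Q(dv;y,z) \;\ge\; \bigl(\zeta(\widetilde z_1) - \zeta(d_2)\bigr)\,Q\bigl((\widetilde z_1,b);y,z\bigr),
\end{equation*}
using that $B\zeta(y,v) = \zeta(v)-\zeta(y) \ge \zeta(\widetilde z_1) - \zeta(d_2) =: C_2 > 0$ for $v > \widetilde z_1$ and $y \le d_2$ (note $C_2 > 0$ since $\zeta$ is strictly increasing on $(x_0,b)$ and $\widetilde z_1 > d_2$). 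By \cndref{Q-cnds}(c), $\liminf_{z\to b}\inf_{y\in[d_1,d_2]} Q((\widetilde z_1,b);y,z) \ge \delta > 0$, so for $z$ close enough to $b$ we get $\wdh{B\zeta}(y,z) \ge C_2\delta/2 > 0$ uniformly in $y\in[d_1,d_2]$.

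Combining the two bounds, for $z$ sufficiently close to $b$,
\begin{equation*}
\sup_{y\in[d_1,d_2]} \mathfrak{P}\bigl((y,\check z];y,z\bigr) \;=\; \sup_{y\in[d_1,d_2]} \frac{\int_{(y,\check z]} B\zeta(y,v)\,Q(dv;y,z)}{\wdh{B\zeta}(y,z)} \;\le\; \frac{C_1}{C_2\delta/2} \cdot \frac{\;\,}{\;\,}\text{---}
\end{equation*}
wait, that bound is not small; the point is subtler. The fix is that the \emph{numerator} must also be shown to vanish: as $z\to b$, \cndref{Q-cnds}(c) forces $Q((\widetilde z_1,b);y,z)$ to stay bounded below, but since $Q(\cdot\,;y,z)$ is a probability measure, this does not directly kill $Q((y,\widetilde z_1];y,z)$. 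Instead I would exploit that $B\zeta(y,v) \to \infty$ as $v\to b$: split $(\widetilde z_1, b)$ further, noting $\wdh{B\zeta}(y,z) \ge \int_{(z',b)} B\zeta(y,v)\,Q(dv;y,z)$ for any $z' \in (\widetilde z_1, b)$, and as $z'\to b$ the factor $\inf_{v>z'}B\zeta(y,v) \ge \zeta(z') - \zeta(d_2)\to\infty$ while $Q((z',b);y,z)$ stays $\ge\delta$ for $z$ near $b$ (apply \cndref{Q-cnds}(c) with $\widetilde z_1$ replaced by $z'$). Hence $\wdh{B\zeta}(y,z)\to\infty$ uniformly in $y\in[d_1,d_2]$ as $z\to b$, while the numerator $\int_{(y,\check z]}B\zeta(y,v)\,Q(dv;y,z) \le C_1$ stays bounded. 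Therefore $\sup_{y\in[d_1,d_2]}\mathfrak{P}((y,\check z];y,z) \le C_1/\wdh{B\zeta}(y,z) \to 0$, which is exactly \eqref{non-compact}.

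The main obstacle, as the false start above signals, is correctly extracting that $\wdh{B\zeta}(y,z)\to\infty$ uniformly over the compact $y$-window rather than merely staying bounded below: one must combine the divergence $\zeta(v)\to\infty$ at the natural boundary with the uniform lower bound from \cndref{Q-cnds}(c), applied with the auxiliary cutoff $z'$ pushed toward $b$. Everything else — boundedness of $\zeta$ away from $b$, monotonicity of $\zeta$ on $(x_0,b)$, and the compactness of $[d_1,d_2]$ — is routine. I would also remark that the case where $\mathfrak{P}$ is only defined for $(y,z)\in\mathcal R$ causes no trouble since we take $z < b$ throughout and let $z\to b$.
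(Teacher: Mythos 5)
Your proof is correct and follows essentially the same route as the paper: bound $\int_{(y,\check z]} B\zeta(y,v)\,Q(dv;y,z)$ by a constant over the compact window, then show $\wdh{B\zeta}(y,z)\to\infty$ uniformly in $y$ by combining the divergence of $\zeta$ at the natural boundary with the uniform mass lower bound from \cndref{Q-cnds}(c) applied at a cutoff pushed toward $b$ (the paper's $z_N$ is your $z'$). The false start in the middle is correctly diagnosed and repaired, so the final argument matches the paper's.
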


 \begin{proof} Let $[d_1,d_2] $ and   $\check{z}$  be given as in the statement of the lemma. Denote $$ M: = \sup\{B\zeta(y,v): y \in  [d_1,d_2], v\in [y, \check z] \} < \infty.$$ Furthermore, let $\delta > 0$ be as in   Condition \ref{Q-cnds} (c). For any $\e > 0$, choose an $N \in \NN$ so that $N > \frac{2M}{\delta\,\e}$. 
 
Since $b$ is a natural boundary, $\lim_{v\to b}[\zeta(v) - \zeta(y)]  = \infty$ uniformly for   $y\in [d_1,d_2]$. Consequently,	for the $N\in \NN$ chosen above, there exists a $z_{N} < b$ (without loss of generality, we can assume that $z_{N} > \check z$) so that \begin{displaymath}
\zeta(v) - \zeta(y) \ge N, \quad \text{ for all } v \ge z_{N} \text{ and }y \in [d_1,d_2].
\end{displaymath}  Now, for the chosen $z_{N}$,  Condition \ref{Q-cnds} (c) says that we can find a $z_{\e} \in (z_{N}, b)$  so that \begin{displaymath}
Q([z_{N}, z]; y,z)  \ge \frac\delta{2}, \quad \text{ for all }z > z_{\e} \text{ and } y \in  [d_1,d_2].
\end{displaymath} 
Then for all $y \in  [d_1,d_2]$ and $z > z_{\e}$, we have \begin{align*} 
  \wdh{B\zeta}(y,z) &  = \int_{y}^{z_{N}} B\zeta(y,v) Q(dv; y,z) + \int_{z_{N}}^{z} B\zeta(y,v) Q(dv; y,z)   \\
& \ge  0 +  N Q([ z_{N}, z]; y,z) 
\ge \mbox{$\frac{N \delta}{2}$}.
\end{align*}  
Consequently, it follows that   for any $y \in  [d_1,d_2]$   and $z > z_{\e}$, we have
 \begin{align*} 
  \mathfrak{P}((\check{z},b);y,z) & =\int_{\check z}^{z} B\zeta(y,v)  \frac{1}{\wdh{B\zeta} (y,z)} Q(dv; y,z)\\
& = \frac{\int_{y}^{z} B\zeta(y,v) Q(dv; y,z)-  \int_{y}^{\check z} B\zeta(y,v) Q(dv; y,z)}{\wdh{B\zeta} (y,z)}\\
 & \ge  1 - \frac{M}{\wdh{B\zeta} (y,z) } \ge  1 - \mbox{$\frac{2M}{N\delta}$} > 1-\e. 
\end{align*} 
This establishes \eqref{non-compact} and hence completes the proof.   
\end{proof}

\begin{rem}
\cndref{Q-cnds}(c) is stronger than the conclusion of this lemma.  To see this, assume $b$ is a natural boundary, let \cndref{diff-cnd} hold and let $\zeta$ be given by \eqref{g0-fn}.  We identify a family $\cal Q$ for which \eqref{non-compact} holds but \cndref{Q-cnds}(c) fails.  We focus on the subset of ${\cal R}$ for which $B\zeta > 1$.  For each such $(y,z)$, let $\breve{y}$ satisfy $\breve{y} > y$ with $B\zeta(y,\breve{y})=\frac{1}{2}$; also set $m_1 := \frac{1}{\sqrt{B\zeta(y,z)}}$ and $m_0:= 1-m_1$.  
Now consider the random supply measures for $(y,z)$ with $B\zeta(y,z) > 1$ given by  
$$Q(\,\cdot\,;y,z) = m_0 \delta_{\breve{y}}(\cdot) + m_1 \delta_{z}(\cdot).$$
Notice that 
$$\wdh{B\zeta}(y,z) = B\zeta(y,\breve{y}) m_0 + B\zeta(y,z) m_1 = \frac{m_0}{2} + \sqrt{B\zeta(y,z)}$$
so for fixed $y$, $\wdh{B\zeta}(y,z) \to \infty$ as $z\to b$.  This convergence then implies \eqref{non-compact} holds for the fixed $y$ and a simple argument extends this to a uniform convergence for $y\in [d_1,d_2]$.

Now for $y\in [d_1,d_2]$ and $(y,z)$ with $B\zeta(y,z) > 1$, for any $\check{z} > d_2$, $Q((\check{z},b);y,z) = \frac{1}{\sqrt{B\zeta(y,z)}} \to 0$ as $z\to b$.  Hence \cndref{Q-cnds}(c) fails.
\end{rem}

Now, combined with Conditions \ref{diff-cnd}, \ref{Q-cnds} and \ref{cost-cnds}, the following set of conditions will be sufficient to guarantee the existence of a minimizer of the function $H_0$ on $\cal{R}$.
 
\begin{cnd} \label{extra-cnd}  
The following conditions hold: 
\begin{description}
\item[(a)]
The boundary $a$ is regular; or exit; or $a$ is a natural boundary for which there exists some $ (y_1,z_1) \in \mathcal{R}$ such that $H_0(y_1,z_1) < c_0(a)$. 

\item[(b)] The boundary $b$ is entrance; or $b$ is natural for which there exists some $(y_2,z_2)\in \mathcal{R}$ such that  $H_0(y_2,z_2) < c_0(b)$. 
\end{description}
\end{cnd}

\begin{rem}
In comparing the random supply model of this paper with the non-deficient supply model of \cite{helm:18}, we observe that \cndref{diff-cnd} is the same in each paper and  \cndref{cost-cnds} of this paper is Condition~2.2 of our 2018 paper.  The present \cndref{Q-cnds} exists only in this paper.  Furthermore, \cndref{extra-cnd} corresponds to Condition~2.3 in the 2018 paper.  It uses $H_0$ in place of $F_0$ to account for random supplies and also removes a monotonicity requirement of $F_0$ near natural boundaries.  
\end{rem}
%
%


We now state our main existence result, which when combined with \thmref{G0-feasible}, establishes the optimality of a nominal $(s,S)$ ordering policy within the large class of admissible policies.  See \sectref{sect:examples} for examples which illustrate these results.

\begin{thm} \label{F-optimizers}
Assume Conditions \ref{diff-cnd} -- \ref{cost-cnds} and \ref{extra-cnd} hold.  Then  there exists a pair $(\yzstar,\zzstar) \in \mathcal R$ such that
\begin{equation} \label{eq-F-optimal}
{H}_0(\yzstar,\zzstar) = {H}_0^* = \inf\{{H}_0(y,z): (y,z) \in \overline{\mathcal R}\}.
\end{equation}
\end{thm}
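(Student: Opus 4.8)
The plan is to show that $H_0$ attains its infimum on $\mathcal{R}$ by a compactness argument: reduce the search for a minimizer to a compact subset of $\mathcal{R}$ (bounded away from the boundary of $\overline{\mathcal{R}}$, including the diagonal $y=z$ and any natural boundary points), and then invoke lower semicontinuity of $H_0$ to obtain an attained minimum. Since $H_0$ is lower semicontinuous on $\overline{\mathcal{R}}$ (established just before the statement, using \propref{c1hat-lsc} and continuity of $\wdh{Bg_0}$, $\wdh{B\zeta}$), once we confine the minimization to a compact set $\mathcal{K}\subset\mathcal{R}$ the Weierstrass-type theorem for lsc functions yields a minimizer in $\mathcal{K}$, and the real work is showing that no minimizing sequence can escape to the boundary.

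\medskip\noindent\textbf{Step 1: finiteness of the infimum.}\enspace Fix any $(y_0,z_0)\in\mathcal{R}$; since $c_1$, $Bg_0$, $B\zeta$ are finite and $B\zeta(y_0,z_0)>0$, and $Q(\cdot;y_0,z_0)$ is supported in $(y_0,z_0]$ away from natural boundaries, $H_0(y_0,z_0)<\infty$. Hence $0\le H_0^*<\infty$.

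\medskip\noindent\textbf{Step 2: a minimizing sequence cannot approach the diagonal.}\enspace Suppose $(y_n,z_n)\in\mathcal{R}$ with $H_0(y_n,z_n)\to H_0^*$. If $(y_n,z_n)\to(y,z)$ with $y=z$ after passing to a subsequence, then since $H_0$ is lsc and $H_0(y,y)=\infty$, we get $\infty=H_0(y,y)\le\liminf H_0(y_n,z_n)=H_0^*<\infty$, a contradiction. More carefully, one should also rule out the degenerate case $y_n-z_n\to 0$ without convergence of the pair; here the key quantitative input is that $\wdh{B\zeta}(y_n,z_n)\to 0$ forces $\wdh{Bg_0}(y_n,z_n)\to 0$ too (both being expectations of $B\zeta(y_n,\cdot)$, $Bg_0(y_n,\cdot)$ against $Q$ supported in the shrinking interval $(y_n,z_n]$), while $\wdh{c}_1\ge k_1>0$; thus $H_0(y_n,z_n)\ge \wdh{c}_1(y_n,z_n)/\wdh{B\zeta}(y_n,z_n)\to\infty$, contradicting $H_0(y_n,z_n)\to H_0^*<\infty$. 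So the sequence stays a definite distance from the diagonal.

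\medskip\noindent\textbf{Step 3: a minimizing sequence cannot escape to the boundaries $a$ or $b$.}\enspace This is where Conditions \ref{extra-cnd} and \ref{Q-cnds}(c) enter, and it is the main obstacle. Consider what happens if $y_n\to a$ (or $z_n\to a$, forcing $y_n\to a$ as well) when $a$ is natural. Using the representation $H_0(y_n,z_n)=\int F_0(y_n,v)\,\mathfrak{P}(dv;y_n,z_n)$ from \propref{inf_comparison}, together with the asymptotics of \lemref{g0-at-a} — namely $F_0(y,v)=\bigl(c_1(y,v)+Bg_0(y,v)\bigr)/B\zeta(y,v)\to c_0(a)$ as the relevant coordinate tends to $a$ — one shows $\liminf_n H_0(y_n,z_n)\ge c_0(a)$; but \cndref{extra-cnd}(a) supplies a point with $H_0(y_1,z_1)<c_0(a)$, so such a sequence cannot be minimizing. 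Symmetrically, if $z_n\to b$ when $b$ is natural, one combines \cndref{Q-cnds}(c) (through \lemref{lem-non-compact}: the mass of $\mathfrak{P}(\cdot;y_n,z_n)$ concentrates near $b$) with $F_0(y_n,v)\to c_0(b)$ as $v\to b$ (\lemref{g0-at-a}, right-hand limits) to get $\liminf_n H_0(y_n,z_n)\ge c_0(b)$, contradicted by \cndref{extra-cnd}(b). When a boundary is regular, exit, or entrance, it already belongs to $\overline{\mathcal{E}}$ in a way that keeps the relevant quantities controlled, or $H_0$ extends lsc up to it with finite values, so escape there is either impossible or directly handled by lsc on the (now compact) closure; the care here is matching each boundary classification in \cndref{diff-cnd} to the corresponding behavior of $B\zeta$, $Bg_0$ and the support of $Q$. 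One must also exclude the ``mixed'' escapes, e.g. $y_n\to a$ while $z_n$ stays in a compact subinterval, or $y_n$ in a compact subinterval while $z_n\to b$ — both covered by the one-sided limits in \eqref{z-fixed-g0-zeta-diff-at-a} of \lemref{g0-at-a}.

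\medskip\noindent\textbf{Step 4: conclusion.}\enspace Steps 2 and 3 show there is a compact set $\mathcal{K}\subset\mathcal{R}$ and a minimizing sequence eventually in $\mathcal{K}$; extract a convergent subsequence $(y_n,z_n)\to(\yzstar,\zzstar)\in\mathcal{K}\subset\mathcal{R}$. By lower semicontinuity of $H_0$, $H_0(\yzstar,\zzstar)\le\liminf_n H_0(y_n,z_n)=H_0^*$, and by definition of $H_0^*$ the reverse inequality holds, giving \eqref{eq-F-optimal}. Finally, since $(\yzstar,\zzstar)\in\mathcal{R}$ we have $y^*_0<z^*_0$, so the infimum over $\overline{\mathcal{R}}$ (where $H_0=\infty$ on the diagonal) equals the infimum over $\mathcal{R}$, completing the proof. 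I expect Step 3 — the careful boundary analysis distinguishing all the cases permitted by Conditions \ref{diff-cnd} and \ref{extra-cnd}, and in particular wiring together \lemref{lem-non-compact} with the ratio asymptotics of \lemref{g0-at-a} — to be the technically delicate part; everything else is the standard lsc-plus-compactness skeleton.
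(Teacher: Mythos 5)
Your proposal is correct and follows essentially the same route as the paper's proof, which likewise writes $H_0(y,z)=\int F_0(y,v)\,\mathfrak{P}(dv;y,z)$, covers a neighborhood of the boundary of $\overline{\mathcal R}$ by explicit regions (diagonal, corners, side edges, top edge) on which $H_0$ strictly exceeds $H_0(y_1,z_1)$ --- using the bound $F_0\geq k_1/B\zeta$ near the diagonal, the ratio asymptotics of \lemref{g0-at-a} near $a$ and $b$, and \lemref{lem-non-compact} for the delicate top-edge case you single out --- and then invokes lower semicontinuity of $H_0$ on the remaining compact subset of $\mathcal R$. The one point to tighten is that near the corners $(a,a)$ and $(b,b)$ with natural boundaries the implication $z_n-y_n\to 0\Rightarrow \wdh{B\zeta}(y_n,z_n)\to 0$ in your Step 2 is not automatic (since $\zeta$ blows up there), so those corners must be absorbed into your Step 3 via the double limits \eqref{double-g0-zeta-diff-at-a}, exactly as the paper's regions $E_1$ and $E_2$ do.
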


\begin{proof} 
The proof consists of several parts corresponding to pieces of the boundary of ${\mathcal R}$, the type of boundary point, and the values of $c_0$ at $a$ and $b$. Since much of the analysis of each part of the proof is similar, we shall only spell out the details of the case that $a$ and $b$ are natural boundaries. When $a$ is attainable or $b$ is an entrance boundary, the boundary is included in $\E$ so the minimum of $H_0$ may be achieved using a boundary point.  The proofs of these cases follow a similar line of argument.  

\begin{figure}[h]
\centering
\begin{tikzpicture} [scale=0.8]
%
\draw[->] (-3.5,0) -- (3.65,0)  node[right] {$y$};
\draw[->] (0,-3.5) -- (0,4.25) node[above] {$v$};
%
 \draw[scale=0.5,domain=-6.85:6.99,smooth,variable=\x,blue] plot ({\x},{\x});
%
\node at(3.42,3.42)[above right]{$(b,b)$};
\node at(-3.45,-3.45)[below left]{$(a,a)$};
\node at (-3.45,3.5) [above left] {$(a,b)$};
%
\draw  [dashed,red] (-3.45,-3.5) -- (-3.45,3.5);
\node at (-3.45,0)[below left] {$a$};
\draw  [dashed,red] (-3.45,3.5) -- (3.45,3.5);
\draw (3.45,-0.05) -- (3.45,0.05);
\node at (3.45,0)[below] {$b$};
%
\path [fill=lightgray] (-3.45,-2) rectangle (-2.55,2);
\node at (-3,-0.5) {$E_{3}$};
\node at (-2.55,0) [above left] {$\wdt{y}$};
\draw (-2.55,-0.05) -- (-2.55,0.05);
%
\path [fill=gray] (-2.55,2.75) rectangle (2,3.5);
\node at (0,3.125) {$E_{5}$};
\draw (-0.05,2.75) -- (0.05,2.75);
\node at (0,2.75) [below left] {$\wdt{z}$};
%
\path [fill=green] (-2.25,-2.25) -- (-2.25,-1) -- (2,3) -- (2,2) -- (-2.25,-2.25);
\node at (0,0.25) [above] {$E_{6}$};
%
\path [fill=yellow] (-3.45,2) rectangle (-2.55,3.5);
\node at (-3,2.75) {$E_4$};
%
\path [fill=cyan] (2,2)-- (3.5,3.5)-- (2,3.5)--(2,2);
\node at (2.15,3)[right]{$E_{1}$};
\draw[-](2,-.05)--(2,0.05); 
\node at (2,0)[below] {$z_\vareps$};
%
\path [fill=pink] (-3.45,-3.45)-- (-2,-2)--(-3.45,-2)--(-3.45,-3.45);
\node at (-3.35,-2.43)[right]{$E_{2}$};
\draw[-](-2,-.05)--(-2,0.05); 
\node at (-2,0)[above right] {$y_{\vareps}$};
\end{tikzpicture}
\caption{Neighborhoods of the Boundary} \label{bdry-nbhds}
\end{figure}
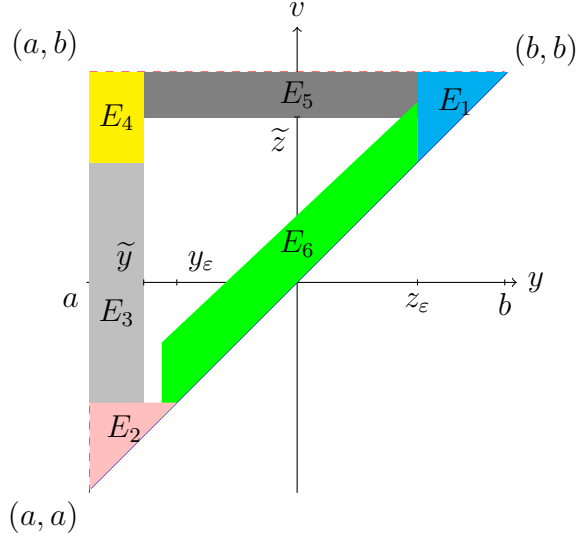

Our method of proof shows that $H_0$ is strictly greater than its infimum in a neighborhood of the boundary.  To begin, recall that
\begin{equation} \label{H0-expression} 
H_0(y,z) = \int_\E F_0(y,v)\, \mathfrak{P}(dv;y,z).
\end{equation}
The challenge is that $\mathfrak{P}(\,\cdot\,;y,z)$ may place mass throughout most of the interval $(y,z]$ so we need to be careful in developing the lower bounds of the integrand near different segments of the boundary; \figref{bdry-nbhds} aids in visualizing this analysis.  With reference to \figref{bdry-nbhds}, the bound 
\begin{equation} \label{F0-bound-1}
F_0(y,v) = \frac{c_1(y,v)+Bg_0(y,v)}{B\zeta(y,v)} > \frac{Bg_0(y,v)}{B\zeta(y,v)}
\end{equation}
will be used in the regions $E_1$, $E_2$, $E_3$, $E_4$ and $E_5$ while 
\begin{equation} \label{F0-bound-2}
F_0(y,v) = \frac{c_1(y,v)+Bg_0(y,v)}{B\zeta(y,v)} \geq \frac{c_1(y,v)}{B\zeta(y,v)} \geq \frac{k_1}{B\zeta(y,v)}
\end{equation} 
 will be  used for region $E_6$.

The two parts of \cndref{extra-cnd} can be combined to have a single pair $(y_1,z_1)\in {\cal R}$ for which $c_0(a)  \wedge c_0(b)> H_0(y_1,z_1)$.  Select $\vareps \in (0,1)$ so that
\begin{equation} \label{eps-choice}
c_0(a)\wedge c_0(b) > \frac{1+\vareps}{1-\vareps} H_0(y_1,z_1) + \vareps \quad \mbox{and}\quad \vareps < \frac{k_1}{H_0(y_1,z_1)}.
\end{equation}


\begin{itemize}
\item By \eqref{double-g0-zeta-diff-at-a} of \lemref{g0-at-a}, there exists some $z_\vareps$ such that 
$$\frac{Bg_0(y,v)}{B\zeta(y,v)} > H_0(y_1,z_1),\quad \forall z_\vareps \leq y < v < b.$$  
Define the neighborhood of $(b,b)$ to be $E_1 = \{(y,z)\in {\cal R}: z_\vareps \leq y < z < b\}$.  

\item Again by \eqref{double-g0-zeta-diff-at-a} of \lemref{g0-at-a}, there exists some $y_\vareps$ such that 
$$\frac{Bg_0(y,v)}{B\zeta(y,v)} > H_0(y_1,z_1), \quad \forall a < y < v \leq y_\vareps.$$  
Define the neighborhood of $(a,a)$ to be $E_2 = \{(y,z)\in {\cal R}: a < y < z \leq y_\vareps\}$.

\item Recall $x_0$ is the initial position.  Using $x_0$ as the fixed value in the two asymptotic results in \eqref{z-fixed-g0-zeta-diff-at-a} of \lemref{g0-at-a}, there exists $\ovl{y}$ and $\ovl{z}$ such that for $y \leq \ovl{y}$ and $v \geq \ovl{z}$, respectively,
\begin{equation} \label{E4-est}
\frac{Bg_0(y,x_0)}{B\zeta(y,x_0)} > H_0(y_1,z_1)  \quad \text{and}\quad \frac{Bg_0(x_0,v)}{B\zeta(x_0,v)} > H_0(y_1,z_1).
\end{equation}
For notational simplicity, we may assume $\ovl{y}=y_\vareps$ and $\ovl{z}=z_\vareps$ by using $y_\vareps\wedge \ovl{y}$ and $z_\vareps \vee \ovl{z}$ in the two previous parts as well as here.  Now define 
$$M:= \max_{y_\vareps \leq v \leq z_\vareps} (|g_0(v)| \vee |\zeta(v)|)$$ 
and note that $M < \infty$ since $g_0$ and $\zeta$ are continuous.  Using the fact that $\lim_{y\to a} \zeta(y) = -\infty$ along with \eqref{g0-zeta-ratio-at-a} of \lemref{g0-at-a}, there exists a $\wdt{y} \leq y_\vareps$ such that for $y \leq \wdt{y}$,
$$\frac{M}{\zeta(y)} \leq \vareps \quad \text{and}\quad \frac{g_0(y)}{\zeta(y)} > \frac{1+\vareps}{1-\vareps}H_0(y_1,z_1) + \vareps.$$
Define a neighborhood of the left boundary segment between $(a,y_\vareps)$ and $(a,z_\vareps)$ to be $E_3 = \{(y,z)\in {\cal R}: y \leq \wdt{y} \text{ and } y_\vareps \leq z \leq z_\vareps\}$.  Observe that for all $(y,z) \in E_3$,
\begin{align*}
\frac{Bg_0(y,v)}{B\zeta(y,v)} = \frac{g_0(y) - g_0(v)}{\zeta(y)-\zeta(v)} & \geq \frac{g_0(y) - M}{\zeta(y) + M} = \frac{\frac{g_0(y)}{\zeta(y)} - \frac{M}{\zeta(y)}}{1 + \frac{M}{\zeta(y)}} \\
& > \frac{\frac{1+\vareps}{1-\vareps}H_0(y_1,z_1) + \vareps - \vareps}{1+\vareps} = \frac{H_0(y_1,z_1)}{1-\vareps} > H_0(y_1,z_1).
\end{align*}

\item Again, let $z_\vareps$ be as in the definition of $E_1$, $y_\vareps$ be from $E_2$ and $\wdt{y}$ be as in $E_3$.  A key observation is that the inequalities \eqref{E4-est} establish that for $a < y \leq y_\vareps$ and $z_\vareps \leq v < b$, 
\begin{align*}
Bg_0(y,v) = Bg_0(y,x_0) + Bg_0(x_0,v) & > H_0(y_1,z_1) (B\zeta(y,x_0) + B\zeta(x_0,v)) \\
& = H_0(y_1,z_1) B\zeta(y,v)
\end{align*}
and therefore 
\begin{equation} \label{E3-est}
\frac{Bg_0(y,v)}{B\zeta(y,v)} > H_0(y_1,z_1), \quad \forall a < y \leq y_\vareps \text{ and } z_\vareps \leq v < b.
\end{equation}  
Since $\wdt{y} \leq y_\vareps$, this inequality holds in the neighborhood of $(a,b)$ defined by $E_4 := \{(y,z)\in {\cal R}: a < y \leq \wdt{y} \text{ and } z_\vareps \leq z < b\}$.  

\item Yet again, let  $z_\vareps$ be as in the definition of $E_1$ and $\wdt{y}$ be from $E_3$.   Now set $M_1 = \max_{\wdt{y}\leq v\leq z_\vareps} (g_0(v)\vee \zeta(v))$, noting that $M_1 \geq M$ since $[\wdt{y},z_\vareps] \supset [y_\vareps,z_\vareps]$.  Since $b$ is a natural boundary, $\lim_{v\to b} \zeta(v) = \infty$ and the asymptotic relation in \eqref{g0-zeta-ratio-at-a} of \lemref{g0-at-a} holds.  Thus there exists some $\check{z} \geq z_\vareps$ such that for $v \geq \check{z}$,
$$\frac{M_1}{\zeta(v)} \leq \vareps \quad \text{and}\quad \frac{g_0(v)}{\zeta(v)} > \frac{1+\vareps}{1-\vareps}H_0(y_1,z_1) + \vareps $$
and hence 
\begin{align} \label{est1} \nonumber
\frac{Bg_0(y,v)}{B\zeta(y,v)} = \frac{g_0(z) - g_0(y)}{\zeta(z)-\zeta(y)} & \geq \frac{g_0(v) - M_1}{\zeta(v) + M_1} = \frac{\frac{g_0(z)}{\zeta(z)} - \frac{M_1}{\zeta(z)}}{1 + \frac{M_1}{\zeta(z)}} \\
& > \frac{\frac{1+\vareps}{1-\vareps}H_0(y_1,z_1) + \vareps-\vareps}{1+\vareps} = \frac{H_0(y_1,z_1)}{1-\vareps}.
\end{align}
Using this $\check{z}$ in \eqref{non-compact} of \lemref{lem-non-compact}, there is some $\wdt{z} > \check{z}$ such that for $z > \wdt{z}$,
\begin{equation} \label{est2}
\inf_{y\in [y_\vareps,z_\vareps]} \mathfrak{P}((\check{z},b);y,z) > 1-\vareps.
\end{equation}
Define a neighborhood of the top boundary segment between $(y_\vareps,b)$ and $(z_\vareps,b)$ to be $E_5 = \{(y,z)\in {\cal R}: \wdt{y} \leq y \leq z_\vareps \text{ and } z \geq \wdt{z}\}$.   

\item Let $y_\vareps$, $z_\vareps$, $E_1$ and $E_2$ be as in the previous steps. From the first two analyzes, we know that for all $(y,z)\in E_1\cup E_2$, $H_0(y,z) \geq H_0(y_1,z_1)$.  We therefore only need to consider a neighborhood of the diagonal segment having $y\in [y_\vareps,z_\vareps]$.  Pick $\check{y}$ with $a < \check{y} < y_\vareps$ to allow a slight overlap with region $E_2$.

Since $\zeta$ is continuous, it is uniformly continuous on the interval $[\check{y},z_\vareps]$.  Let $\delta$ be such that $\check{y} \leq y \leq z_\vareps$ and $y \leq z \leq y+\delta$ implies $B\zeta(y,z) < \vareps$.  Define the neighborhood of the cropped diagonal to be $E_6 = \{(y,z)\in {\cal R}: \check{y} \leq y \leq z_\vareps, y < z \leq y+\delta \}$.  Recalling from \eqref{eps-choice} that $\vareps < \frac{k_1}{H_0(y_1,z_1)}$, it therefore follows from \eqref{F0-bound-2} that for all $(y,z)\in E_6$ and $y <v \leq z$ , 
$$F_0(y,v) > \frac{k_1}{B\zeta(y,v)} > \frac{k_1}{\vareps} > H_0(y_1,z_1).$$
\end{itemize}

Returning to \eqref{H0-expression}, observe that the integration is with respect to the second variable $v$ so is integration over the vertical line segment from the point $(y,y)$ on the diagonal to $(y,z)$.  In particular, for $(y,z) \in E_1\cup E_2 \cup E_3 \cup E_4 \cup E_6$, supp$(\mathfrak{P}(\,\cdot\,;y,z))$ is contained in this union.  

Now in the regions $E_1$ to $E_4$, combine \eqref{F0-bound-1} with the fact that $\frac{Bg_0(y,v)}{B\zeta(y,v)} > H_0(y_1,z_1)$ to see that $F_0(y,v) > H_0(y_1,z_1)$.  Similarly for region $E_6$, use the relation $F_0(y,v) > H_0(y_1,z_1)$ for the same result.  It now follows from \eqref{H0-expression} and the fact that $\mathfrak{P}(\,\cdot\,;y,z)$ is a probability measure that on regions $E_1$, $E_2$, $E_3$, $E_4$ and $E_6$, $H_0(y,z) > H_0(y_1,z_1)$ and hence the infimum does not occur in these regions or in the limit at the outer boundaries.

More care must be taken in region $E_5$ since for $(y,z) \in E_5$, supp$(\mathfrak{P}(\,\cdot\,;y,z))$ may not be contained in $\cup_{i=1}^6 E_i$ where $F_0(y,v)$ is larger than $H_0(y,v)$.  Using \eqref{H0-expression}, \eqref{F0-bound-1}, \eqref{est1} and \eqref{est2}, for $(y,z) \in E_5$,
\begin{align*}
H_0(y,z) = \int_\E F_0(y,v)\, \mathfrak{P}(dv;y,z) & \geq \int_{(\check{z},b)} F_0(y,v)\, \mathfrak{P}(dv;y,z) \\
& > \frac{H_0(y_1,z_1)}{1-\vareps}\, \mathfrak{P}((\check{z},b);y,z) > H_0(y_1,z_1).
\end{align*}
It thus follows that the infimum $H_0^*$ is not achieved or approached in $\cup_{i=1}^6 E_i$.  Therefore $H_0^*$ is achieved at some $(y_0^*,z_0^*)\in \ovl{(\cup_{i=1}^6 E_i)^c}\subsetneq {\cal R}$ since $H_0$ is lower semicontinuous on this compact region.
\end{proof}

\begin{rem}
For 
inventory models with non-deficient supply and   specially structured diffusion dynamics under appropriate conditions for the cost  functions, 
 the first order optimality conditions (see (3.17) of \cite{helm:15b}) involving $F_0$ of \eqref{eq-F-fn} can be utilized to obtain uniqueness of the optimizing policy.  The inclusion of the random yield measure adversely affects this analytical approach and we have been unable to derive general uniqueness results.
\end{rem}

\begin{rem}
Though the statement of \thmref{F-optimizers} requires \cndref{Q-cnds}, a careful examination of the proof reveals that only \eqref{non-compact} is used, which is implied by \cndref{Q-cnds}(c).  Thus existence of an optimizer holds when the weaker condition is imposed.  In addition, compared with Theorem~2.1 of \cite{helm:18}, our more careful analysis of $H_0$ at the boundaries using \eqref{eps-choice} proves the existence of an optimizing pair without the need of the monotonicity requirement of $F_0$ from Condition~2.3 of \cite{helm:18}.
\end{rem}

\section{Expected Occupation and Ordering Measures} \label{sect:occ-meas}
To establish general optimality of the $(\yzstar,\zzstar)$-policy of an inventory problem with random yield, we apply weak convergence arguments with average expected occupation  and average expected nominal ordering measures as well as expected stock-level measures which we now define.  For $(\tau,Z) \in \A$, let $X$ denote the resulting inventory level process satisfying \eqref{controlled-dyn}.  For each $t > 0$, define the average expected occupation measure ${\mu}_{0,t}$ on ${\cal E}$, and the average expected nominal ordering measure ${\nu}_{1,t}$ and stock-level measure ${\mu}_{1,t}$  on $\overline{\cal R}$ of the inventory process with random yield during the time interval $[0,t]$ by
\begin{equation} \label{mus-t-def}
\begin{array}{rcll}
 {\mu}_{0,t}(\Gamma_0) &:=& \displaystyle \mbox{$\frac{1}{t}$} \EE\left[\int_0^t I_{\Gamma_0}(X(s))\, ds\right], & \hspace{-0.5cm}\quad \Gamma_0 \in {\cal B}({\cal E}), \rule[-15pt]{0pt}{15pt} \\

 {\nu}_{1,t}(\Gamma_1)  &:=&  \displaystyle \mbox{$\frac{1}{t}$} \EE\left[\sum_{k=1}^\infty I_{\{\tau_k \leq t\}} I_{\Gamma_1}(X(\tau_k-),Z_k)\right], &\hspace{-0.5cm} \quad \Gamma_1 \in {\cal B}(\overline{\cal R}), \\

 {\mu}_{1,t}(\Gamma_2)  &:=& \displaystyle \mbox{$\frac{1}{t}$} \EE\left[\sum_{k=1}^\infty I_{\{\tau_k \leq t\}} {I_{\Gamma_2}}(X(\tau_k-),X(\tau_k))\right], & \hspace{-0.5cm} \quad \Gamma_2 \in {\cal B}(\overline{\cal R}). 
\end{array}
\end{equation}
The distinction between ${\nu}_{1,t}$ and ${\mu}_{1,t}$ is that the former is a measure on the (state,\,action) space while the latter is a measure on a (state,\,state) space, both spaces being correctly denoted by $\ovl{\cal R}$.
%

Using the construction of the underlying probability model of the inventory process $X$ corresponding to a policy $(\tau,Z) \in \A$ in \cite{helm:19}, we can rewrite the expected stock-level measure (up to time $t$) as follows: 
\begin{equation} 
\begin{array}{rcll}
{\mu}_{1,t}(\Gamma_2)  &=& \displaystyle \mbox{$\frac{1}{t}$} \sum_{k=1}^\infty \EE\left[ \EE\left[I_{\{\tau_k \leq t\}} I_{\Gamma_2}(X(\tau_k-),X(\tau_k)) | {\cal F}_{\tau_k-}\right] \right] & \\
&=& \displaystyle \mbox{$\frac{1}{t}$} \EE\left[\sum_{k=1}^\infty I_{\{\tau_k \leq t\}} \int I_{\Gamma_2}(X(\tau_k-),v)\, Q(dv; X(\tau_k-),Z_k)\right] & \\
&=& \displaystyle \mbox{$\frac{1}{t}$} \EE\left[\sum_{k=1}^\infty I_{\{\tau_k \leq t\}} \hat{I_{\Gamma_2}}(X(\tau_k-),Z_k)\right] \rule{0pt}{22pt} &\\
&=& \displaystyle \int \hat{I_{\Gamma_2}}(y,z)\, \nu_{1,t}(dy\times dz). & \\
\end{array}
\end{equation}
 Consequently, for any bounded, measurable $f$ and $t > 0$, we have 
\begin{eqnarray} \label{e:mu-nu-integral} \nonumber
\EE\Bigg[\sum_{k=1}^{\infty} I_{\{ \tau_{k}\le t\}} Bf (X(\tau_{k}-), X(\tau_{k})) \Bigg] &=& \int Bf(y,v) \mu_{1,t} (dy\times d v) \\
&=&  \int \wdh {Bf}(y,z) \nu_{1,t} (dy\times d z).
\end{eqnarray} 
Furthermore, using the measures $\mu_{0,t}, \mu_{1,t}$, and $\nu_{1,t}$, we can write for any $t > 0$, \begin{align} \label{e:cost-mu-nu-integral}
\nonumber t^{-1}  &\EE\bigg[\int_{0}^{t} c_{0}(X(s) ) ds + \sum_{k=1}^{\infty} I_{\{ \tau_{k}\le t\}} c_{1}(X(\tau_{k}-), X(\tau_{k})) \bigg] \\
\nonumber &  = \int c_{0}(x) \mu_{0,t}(dx) + \int c_{1}(y,v) \mu_{1,t} (dy\times d v) \\
& = \int c_{0}(x) \mu_{0,t}(dx) + \int \wdh c_{1}(y,z) \nu_{1,t} (dy\times d z).
\end{align} These observations will be used in Section \ref{sect:optim}.

Note, for the controlled process $X$, the expected stock-level measure ${\mu_{1,t}}$ counts the relative number of times the pairs of order-from-locations and inventory levels (after the supply has arrived) hit the set $\Gamma_2$ during the time interval $[0,t]$, while the expected {\em nominal}  ordering measure ${\nu_{1,t}}$ does so for the pairs of order-from-locations and control values (hitting the set $\Gamma_1$). 


Furthermore, if $a$ is a reflecting boundary and if $L_a$ denotes the local time of $X$ at $a$, define the average expected local time measure $\mu_{2,t}$ for each $t > 0$ to place a point mass on $\{a\}$ given by 
\begin{equation} \label{lta-local-time}
\mu_{2,t}(\{a\}) = \mbox{$\frac{1}{t}$} \EE[L_a(t)].
\end{equation}

\begin{rem} \label{masses-observation}
As in the case of inventory models with non-deficient yield in \cite {helm:18}, the average expected occupation measure ${\mu}_{0,t}$ is a probability measure on ${\cal E}$ for each $t > 0$.  In addition, for each $(\tau,Z) \in \A$ with $J(\tau,Z) < \infty$, ${\nu}_{1,t}$ has finite mass and $\limsup_{t\rightarrow \infty} {\nu}_{1,t}(\overline{\cal R}) \leq J(\tau,Z)/k_1$.  Observe that when $a$ is a sticky boundary, ${\mu}_{0,t}$ places a point mass at $a$ for those policies $(\tau,Z)$ that allow the process $X$ to stick at $a$ with positive probability.

Aside from the notation, the next two propositions and their proofs are the same as in Section 3 of \cite {helm:18}.  The two propositions focus on the relative compactness of the collection of $\mu_{0,t}$ measures and the associated convergence (or not) of the functionals with integrand $c_0$.
\end{rem}
 
\begin{prop}[Proposition~3.1 of \cite{helm:18}] \label{mu0-tightness}
Assume Conditions~\ref{diff-cnd} -- \ref{cost-cnds} are satisfied.  For $(\tau,Z) \in \A$, let $X$ denote the resulting inventory process satisfying \eqref{controlled-dyn}.  Let $\{t_i: i \in \NN\}$ be a sequence such that $\lim_{i\rightarrow \infty} t_i = \infty$ and for each $i$, define $\mu_{0,t_i}$ by \eqref{mus-t-def}.  If  $J(\tau,Z) < \infty$, then $\{\mu_{0,t_i}: i \in \NN\}$ is tight.
\end{prop}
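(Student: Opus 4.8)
The plan is to establish tightness of $\{\mu_{0,t_i}\}$ on $\E$ by constructing a suitable \emph{Lyapunov (moment) function} $\Phi$ on $\E$ whose image under the generator $A$ dominates a function that goes to infinity at any natural boundary, and then using the fact that the average expected cost controls the average expected value of $A\Phi$ along the trajectory. Concretely, the measure $\mu_{0,t}$ is a probability measure on $\E$ (Remark~\ref{masses-observation}), so tightness is only an issue near the boundary points $a$ and $b$ that are \emph{not} included in $\E$, i.e.\ the natural boundaries. The idea is to find $\Phi\ge 0$ on $\E$ with $\Phi(x)\to\infty$ as $x$ approaches any natural boundary, and such that $A\Phi \le C_1 + C_2\, c_0$ on $\I$ for constants $C_1,C_2$; the function $\zeta$ of \eqref{g0-fn} (which already satisfies $A\zeta = -1$ and blows up precisely at natural boundaries) together with $g_0$ (which satisfies $Ag_0 = -c_0$) are the natural building blocks. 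Combining these as $\Phi = \alpha g_0 + \beta \zeta + (\text{const})$ with an appropriate sign adjustment, or more directly using the representation of the occupation measure via Dynkin's formula for the controlled process, one expects $A\Phi \ge -(1 + c_0)$ up to absorbing the jump contributions.

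The key steps, in order, would be: (1) Recall from \eqref{mus-t-def} and \eqref{e:cost-mu-nu-integral} that $\int c_0\, d\mu_{0,t} \le t^{-1}\EE[\int_0^t c_0(X(s))ds + \sum_k I_{\{\tau_k\le t\}} c_1(\cdots)] = $ the truncated average cost, hence $\limsup_t \int c_0 \, d\mu_{0,t} \le J(\tau,Z) < \infty$. (2) Show that $c_0$ (extended to $\ovl\E$) is \emph{inf-compact} on $\E$ relative to the natural boundaries using \cndref{cost-cnds}(a): when $a,b$ are finite natural boundaries $c_0(a), c_0(b)$ exist in $\ovl{\RR^+}$, and when they are infinite we have $c_0(\pm\infty)=\infty$; the only worry is a finite natural boundary with $c_0$ finite there. (3) In the good case where $c_0$ blows up at every natural boundary, step (1) plus a Chebyshev/Markov argument immediately gives tightness: $\mu_{0,t}(\{c_0 > R\}) \le R^{-1}\int c_0\,d\mu_{0,t}$, uniformly bounded, so the sets $\{c_0 \le R\}$ are the required compacts. (4) In the remaining case (a finite natural boundary, say $b$, with $c_0(b)<\infty$), one must instead invoke the auxiliary integrability \eqref{c0-M-integrable}, the condition $M[y,b)<\infty$ from \cndref{diff-cnd}(b), and the blow-up of $\zeta$ (equivalently of the scale function) at $b$: here one uses $\zeta$ itself as the moment function, noting $A\zeta = -1$, and a Dynkin-type identity $t^{-1}\EE[\zeta(X(t)) - \zeta(x_0)] = -\mu_{0,t}(\E) + t^{-1}\EE[\sum_k I_{\{\tau_k\le t\}} B\zeta(X(\tau_k-),X(\tau_k))]$, rearranged to bound $\int \zeta\, d\mu_{0,t}$ in terms of the expected number of orders (controlled by $J(\tau,Z)/k_1$ as in Remark~\ref{masses-observation}) and the cost. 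Since $\zeta\to\infty$ at natural boundaries, this again yields tightness via Chebyshev.

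I expect the main obstacle to be step (4) — handling a \emph{finite} natural boundary at which the cost rate $c_0$ stays bounded. Tightness there cannot come from the cost functional alone, so one needs the Dynkin/renewal identity for $\zeta$ and careful control of the jump (ordering) terms: each order $(X(\tau_k-),X(\tau_k))$ contributes $B\zeta(X(\tau_k-),X(\tau_k)) = \zeta(X(\tau_k)) - \zeta(X(\tau_k-))$, and since the post-order level $X(\tau_k)$ could in principle be near $b$ one must argue these contributions are not too large on average — this is where \cndref{Q-cnds}(a,ii) (support in $(y,z]$, so nominal levels $Z_k\in\E$ bound $X(\tau_k)$) and the finiteness of $\limsup_t \nu_{1,t}(\ovl{\cal R})$ enter. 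Since the excerpt explicitly says this proposition and its proof are "the same as in Section~3 of \cite{helm:18}," I would in practice cite that argument verbatim after setting up the identities above; the content is the moment-function/Lyapunov estimate combined with the cost bound, and the only model-specific adjustment is carrying the $\wdh{\,\cdot\,}$-averaged jump operator through \eqref{e:mu-nu-integral}.
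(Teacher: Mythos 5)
Your steps (1)--(3) are exactly the argument the paper (via the cited proof in \cite{helm:18}) uses: the nonnegativity of the ordering cost gives $\sup_i \int c_0\, d\mu_{0,t_i} \leq J(\tau,Z)+1 < \infty$ along the sequence, and Markov's inequality applied to the sublevel sets $\{c_0 \leq R\}$ handles any boundary at which $c_0$ blows up. The gap is in how you frame step (4). Tightness here is tightness of the $\mu_{0,t_i}$ viewed as measures on $\overline{\E}$, the closure of $\E$ in $\RR$ --- not on $\E$ itself. This is forced by the way the result is used: \propref{J-finite} integrates the weak limit $\mu_0$ over $\overline{\E}$, and the discussion following it explicitly contemplates $\mu_0$ charging a boundary point unless $c_0$ is infinite there. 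Since $\overline{\E}$ contains every finite natural boundary point and excludes only $\pm\infty$, the set $\overline{\E}$ fails to be compact only at an infinite endpoint, where \cndref{cost-cnds} forces $c_0(\pm\infty)=\infty$. So your step (3) already completes the proof; the case you single out as ``the main obstacle'' --- a finite natural boundary with $c_0$ bounded --- simply does not obstruct tightness on $\overline{\E}$, and no Lyapunov/Dynkin argument is needed.

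Moreover, the argument you sketch for step (4) would not succeed. The identity you write controls $t^{-1}\EE[\zeta(X(t))]$ (the terminal value), not the occupation-measure integral $\int \zeta\, d\mu_{0,t} = t^{-1}\EE\bigl[\int_0^t \zeta(X(s))\,ds\bigr]$; bounding the latter would require a function $\Phi$ with $A\Phi \leq -|\zeta| + C$, which is a genuinely stronger moment condition that is not available here. In fact a uniform bound on $\int |\zeta|\, d\mu_{0,t}$ is false in general: if $a$ is a finite natural (hence attracting) boundary with $c_0(a)<\infty$, a policy that orders extremely rarely has finite long-term average cost while its occupation measures concentrate at $a$, where $|\zeta|=\infty$. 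Such measures are not tight on $\E$ but are trivially tight on $\overline{\E}$ --- which confirms both that the proposition must be read on $\overline{\E}$ and that the extra estimate you were trying to prove cannot hold.
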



\begin{prop}[Proposition~3.2 of \cite{helm:18}] \label{J-finite}
Assume Conditions~\ref{diff-cnd} -- \ref{cost-cnds} hold true.  Let $(\tau,Z) \in {\cal A}$ with $J(\tau,Z) < \infty$, $X$ satisfy \eqref{controlled-dyn}, and define $\mu_{0,t}$ by \eqref{mus-t-def} for each $t > 0$.  Then for each $\mu_0$ attained as a weak limit of some sequence $\{\mu_{0,t_j}\}$ as $t_j \rightarrow \infty$, 
$$\int_{\overline{\cal E}} c_0(x)\, \mu_0(dx) \leq J(\tau,Z) < \infty.$$
\end{prop}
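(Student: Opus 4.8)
The plan is to combine the cost identity \eqref{e:cost-mu-nu-integral} with a truncation argument exploiting the lower semicontinuity of $c_0$ on the compact set $\ovl{\cal E}$; this mirrors the argument for Proposition~3.2 of the non-deficient-supply paper, the only adjustments being notational.

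First I would extract from \eqref{e:cost-mu-nu-integral} the identity
$$\int_{\ovl{\cal E}} c_0(x)\, \mu_{0,t}(dx) = t^{-1}\EE\Big[\int_0^t c_0(X(s))\,ds + \sum_{k=1}^\infty I_{\{\tau_k\le t\}} c_1(X(\tau_k-),X(\tau_k))\Big] - \int \wdh c_1(y,z)\, \nu_{1,t}(dy\times dz),$$
valid for every $t>0$. Since $c_1 \geq k_1 > 0$ by \cndref{cost-cnds}(b), the integrand $\wdh c_1$ is nonnegative, so the subtracted term is $\geq 0$ and hence $\int c_0\, d\mu_{0,t}$ is bounded above by the bracketed expectation divided by $t$. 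Taking $\limsup$ along the subsequence $\{t_j\}$ for which $\mu_{0,t_j}\Rightarrow\mu_0$, the definition of $J(\tau,Z)$ in \eqref{lta-obj-fn} yields
$$\limsup_{j\to\infty}\int_{\ovl{\cal E}} c_0(x)\,\mu_{0,t_j}(dx) \leq J(\tau,Z) < \infty.$$

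Second I would push this bound to the limit measure $\mu_0$. Recall $\ovl{\cal E}$ is a compact subinterval of $\RR$ (since $\pm\infty\notin\ovl{\cal E}$), and by \cndref{cost-cnds}(a) the function $c_0$ is continuous on $\I$ with limits existing in $\overline{\RR^+}$ at the finite boundary points; hence the extension of $c_0$ to $\ovl{\cal E}$ is lower semicontinuous (indeed continuous as a $[0,\infty]$-valued map). For each $N\in\NN$ the truncation $c_0\wedge N$ is bounded and continuous on $\ovl{\cal E}$, so weak convergence gives
$$\int_{\ovl{\cal E}} (c_0\wedge N)\, d\mu_0 = \lim_{j\to\infty}\int_{\ovl{\cal E}} (c_0\wedge N)\, d\mu_{0,t_j} \leq \limsup_{j\to\infty}\int_{\ovl{\cal E}} c_0\, d\mu_{0,t_j} \leq J(\tau,Z).$$
Letting $N\to\infty$ and applying the monotone convergence theorem on the left produces $\int_{\ovl{\cal E}} c_0\, d\mu_0 \leq J(\tau,Z)$, which is the assertion.

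The step requiring genuine care is this last passage to the limit, because $c_0$ need not be bounded and may even equal $+\infty$ at a natural boundary point that belongs to $\ovl{\cal E}$ only as a limit point; the Portmanteau theorem therefore cannot be applied to $c_0$ directly, and this is precisely why the inequality is stated over $\ovl{\cal E}$ rather than $\cal E$ and why the truncation-plus-monotone-convergence device is needed. By contrast, relative compactness is not an issue here: $\ovl{\cal E}$ being compact, every sequence of the probability measures $\mu_{0,t}$ has weakly convergent subsequences (cf.\ \remref{masses-observation} and \propref{mu0-tightness}), so the limit $\mu_0$ posited in the statement always exists.
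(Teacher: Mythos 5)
Your proof is correct and follows the same route the paper intends: the paper itself gives no independent argument here but defers to Proposition~3.2 of \cite{helm:18}, stating that ``aside from the notation'' the proof is unchanged, and that proof is exactly your two-step scheme --- drop the nonnegative ordering-cost term from the identity \eqref{e:cost-mu-nu-integral} to bound $\int c_0\, d\mu_{0,t_j}$ by the running average cost, then transfer the bound to the weak limit via truncation $c_0\wedge N$ and monotone convergence.

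One factual claim in your write-up is wrong, though it does not break this particular proof: $\ovl{\cal E}$ is \emph{not} compact in general. The paper defines $\ovl{\cal E}$ as the closure of ${\cal E}$ in $\RR$ with $\pm\infty\notin\ovl{\cal E}$, so when a natural boundary is infinite (e.g.\ the drifted Brownian motion model with ${\cal I}=\RR$) one has $\ovl{\cal E}=\RR$, which is unbounded. This is precisely why \propref{mu0-tightness} is a nontrivial result requiring $J(\tau,Z)<\infty$ and $c_0(\pm\infty)=\infty$; your closing remark that ``relative compactness is not an issue here'' because $\ovl{\cal E}$ is compact is therefore incorrect as a justification for the existence of weak limits. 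Your main argument survives because the proposition \emph{assumes} a weak limit $\mu_0$ is given, and neither the continuity of $c_0\wedge N$ on $\ovl{\cal E}$ nor the Portmanteau/monotone-convergence step actually uses compactness --- only boundedness and continuity of the truncated test function.
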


We note that $c_0$ being infinite at a boundary implies that $\mu_0$ cannot assign any positive mass at this point.  In particular, for models in which $a$ is a sticky boundary and $c_0(a) = \infty$, any policy which allows $X$ to stick at $a$ on a set of positive probability incurs an infinite average expected cost for each $t$ and thus has $J(\tau,Z) = \infty$.  The requirement that $J(\tau,Z) < \infty$ therefore eliminates such $(\tau,Z)$ from consideration.

\section{The Auxiliary Function ${U}_0$}\label{sect-main result} 
To prove optimality of an $(s,S)$-policy for inventory models with random yield, we have to further adapt some of the concepts introduced in \cite {helm:18} to the case under consideration. In particular, we (slightly) modify the function $G_0=g_0 - F_0^*\zeta$ introduced in Section 4 of that paper. To this end, recall ${H}_0^*$ is the infimum of the function $H_0$ and \cndref{cost-cnds} requires continuity of $c_0$ at the boundary, even for finite, natural boundaries; $c_0$ may take value $\infty$ at the boundaries.
Define the auxiliary function ${U}_0$ on $\mathcal{E}$ by
\begin{equation} \label{G0-def}
U_0 := g_0 - H_0^* \zeta,
\end{equation}
and observe that the function $U_0$ differs from the function $G_0$ only as far as the constant $F_0^*$  is concerned; this constant is replaced by $H_0^*$.
Hence, the (new auxiliary) function ${U}_0$ inherits essential properties of the function  ${G}_0$.
Specifically, it is an element of $C(\mathcal{E})\cap C^2(\I)$, and it also extends uniquely to $\overline{\cal E}$ due to the existence of $(y_1,z_1)$ and $(y_2,z_2)$ in \cndref{extra-cnd} or $c_0$ being infinite at the boundaries.  
This observation follows immediately when $a$ is attainable and when $b$ is an entrance boundary since $\zeta$ is finite in these cases.  When $a$ or $b$ are natural boundaries, \lemref{g0-at-a} combined with \cndref{extra-cnd} shows that
\begin{equation} \label{nat-bdry-limits}
\lim_{x\rightarrow a} {U}_0(x) = \lim_{x\rightarrow a} (g_0(x) - {H}_0^* \zeta(x)) = \lim_{x\rightarrow a} \left(\mbox{$\frac{g_0(x)}{\zeta(x)}$} - {H}_0^*\right) \zeta(x) = -\infty
\end{equation}
and similarly $\lim_{x\rightarrow b} {U}_0(x) = \infty$.  
\color{black}
\begin{rem}
The function $U_0$ provides the following interpretation of the numerator of the function $H_0$.  Let $(y,z) \in \mathcal{R}$,  then
\begin{align*}
\hat{c}_1(y,z) + \hat{BU_0}(y,z) &= \hat{c}_1(y,z) + \hat{Bg_0}(y,z) - H_0^* \hat{B\zeta}(y,z) \\
&= \left(\frac{\hat{c}_1(y,z) + \hat{Bg_0}(y,z)}{\hat{B\zeta}(y,z)} - H_0^*\right) \hat{B\zeta}(y,z) = (H_0(y,z) - H_0^*) \hat{B\zeta}(y,z).
\end{align*}
Notice the relation $H_0^* \leq H_0(y,z)$ holds for all $(y,z) \in \mathcal{R}$. Thus, the function $\hat{c}_1(y,z) + \hat{BU_0}(y,z)$ gives the {\em increase in cost over a cycle}\/ incurred by using the nominal $(y,z)$-ordering policy rather than an optimal nominal ordering policy.
\end{rem}

Like the function $G_0$, the function $U_0$ also satisfies an (important)  system of relations.

\begin{prop} \label{qvi-ish}
Assume Conditions \ref{diff-cnd} -- \ref{cost-cnds} and \ref{extra-cnd} hold true.  Let $(y_0^*,z_0^*)\in {\cal R}$ be given by \thmref{F-optimizers} and let ${U}_0$ be as in \eqref{G0-def}.  Then ${U}_0$ is a solution of the system 
$$\left\{\begin{array}{rcll}
Af(x) + c_0(x) - {H}_0^*&=& 0, & \quad x\in \mathcal{I}, \\
\hat{Bf}(y,z) + \hat{c}_1(y,z) &\geq& 0,   & \quad (y,z) \in \overline{\mathcal{R}} \\
f(x_0) &=& 0,  & \\
\hat{Bf}(\yzstar,\zzstar) + \hat{c}_1(\yzstar,\zzstar) &=& 0. &
\end{array} \right.$$
Moreover, the first relation extends by continuity to $\overline{\cal E}$.
\end{prop}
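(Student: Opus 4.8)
The plan is to verify each of the four relations in turn, drawing on the definition $U_0 = g_0 - H_0^*\zeta$ and the properties of $g_0$ and $\zeta$ recorded in \eqref{g0-fn}, \eqref{inhom-eq} and \lemref{g0-at-a}. For the first relation, I would simply apply the linear operator $A$ to $U_0$ and use that $A g_0 = -c_0$ and $A\zeta = -1$ on $\I$ from \eqref{inhom-eq}; this gives $AU_0 = -c_0 + H_0^*$, i.e. $AU_0 + c_0 - H_0^* = 0$ on $\I$. The normalization $U_0(x_0) = 0$ is immediate since both $g_0$ and $\zeta$ vanish at $x_0$ by \eqref{g0-fn}, so that third relation needs no work.

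For the fourth (equality) relation, I would use the identity derived in the Remark preceding the proposition, namely $\hat c_1(y,z) + \widehat{BU_0}(y,z) = (H_0(y,z) - H_0^*)\,\widehat{B\zeta}(y,z)$ for $(y,z)\in\mathcal R$. Evaluating at the optimizer $(\yzstar,\zzstar)$ furnished by \thmref{F-optimizers}, we have $H_0(\yzstar,\zzstar) = H_0^*$, so the right-hand side is $0$; since $(\yzstar,\zzstar)\in\mathcal R$ we also know $\widehat{B\zeta}(\yzstar,\zzstar)$ is finite (and positive), so indeed $\hat c_1(\yzstar,\zzstar) + \widehat{BU_0}(\yzstar,\zzstar) = 0$. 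The same identity handles the interior of the inequality relation: for $(y,z)\in\mathcal R$, $(H_0(y,z) - H_0^*)\widehat{B\zeta}(y,z) \ge 0$ because $H_0(y,z) \ge H_0^*$ by definition of the infimum and $\widehat{B\zeta}(y,z) \ge 0$; hence $\hat c_1(y,z) + \widehat{BU_0}(y,z) \ge 0$ on $\mathcal R$. It remains to extend this inequality to the diagonal part $y = z$ of $\overline{\mathcal R}$, where $\widehat{B\zeta}(y,y) = 0$ and $\widehat{BU_0}(y,y) = 0$ (by \cndref{Q-cnds}(a,i), $Q(\cdot;y,y) = \delta_{\{y\}}$, so $\widehat{Bf}(y,y) = Bf(y,y) = 0$ for any $f$), while $\hat c_1(y,y) = c_1(y,y) \ge k_1 > 0$ by \cndref{cost-cnds}(b); thus the inequality holds trivially there as well.

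Finally, for the claim that the first relation extends by continuity to $\overline{\mathcal E}$, I would invoke \cndref{cost-cnds}(a), which guarantees that $c_0$ has well-defined limits $c_0(a)$, $c_0(b)$ in $\overline{\R^+}$ at the boundaries, together with the fact, established in the discussion following \eqref{nat-bdry-limits}, that $U_0$ itself extends continuously to $\overline{\mathcal E}$ (taking the value $-\infty$ at a natural boundary $a$ and $+\infty$ at a natural $b$); one then interprets $AU_0 = -c_0 + H_0^*$ as an identity of extended-real-valued functions, which is consistent since $AU_0 = -c_0 + H_0^*$ throughout $\I$ and both sides have limits at the endpoints. The only point requiring any care is the bookkeeping at natural boundaries where several quantities become infinite; but since $A$ acts only in the interior and we are merely asserting the already-proven interior identity has boundary limits, this is routine. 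I do not expect a serious obstacle: every step reduces to the defining properties of $g_0$, $\zeta$, the optimality of $(\yzstar,\zzstar)$, and \cndref{Q-cnds}(a,i) for the $\hat{\ }$-operation on the diagonal; the mildest subtlety is confirming $\widehat{B\zeta}(\yzstar,\zzstar)$ is genuinely finite and positive so that dividing/multiplying in the Remark's identity is legitimate, which follows because $(\yzstar,\zzstar)\in\mathcal R$ (not on the diagonal) and $\zeta$ is finite and strictly monotone off $x_0$ in the interior.
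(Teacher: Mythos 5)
Your proof is correct and is precisely the ``straightforward'' argument the paper has in mind (the paper omits the proof, leaving it to the reader): the first and third relations follow from \eqref{inhom-eq} and \eqref{g0-fn}, the second and fourth from the identity $\hat{c}_1 + \hat{BU_0} = (H_0 - H_0^*)\hat{B\zeta}$ in the remark together with $H_0 \geq H_0^*$ and optimality of $(\yzstar,\zzstar)$, and the diagonal case from \cndref{Q-cnds}(a,i) and $c_1 \geq k_1$. Your care over the finiteness and positivity of $\hat{B\zeta}(\yzstar,\zzstar)$ and the boundary extension via \cndref{cost-cnds}(a) is exactly the right bookkeeping.
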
 

The proof is straightforward so is left to the reader.  With the appropriate use of the $\hat{~}$-operation in \eqref{hat-def}, the arguments of the proof of the following proposition are identical to those of Proposition 4.2 in \cite{helm:18} for models with non-deficient supply.  Similarly, Remark~4.2 of our 2018 paper remains valid, explaining the reason that the definitions of $g_0$ and $\zeta$ exclude the solutions to the homogeneous equations in \eqref{inhom-eq}.

\begin{prop} \label{sS-in-Q2}
Assume Conditions \ref{diff-cnd} -- \ref{cost-cnds} and \ref{extra-cnd}.  Let $x_0 \in \mathcal{I}$ be fixed.  For $a \leq y < z < b$, let $(\tau,Z)$ be the $(y,z)$-ordering policy defined by \eqref{sS-tau-def} and $X$ satisfy \eqref{controlled-dyn}.  Define the process $\wdt{M}$ by  
$$\wdt{M}(t) := \int_0^t \sigma(X(s)) {U}_0'(X(s))\, dW(s), \quad t \geq 0.$$
Then there exists a localizing sequence $\{\beta_n: n\in \NN\}$ of stopping times such that for each $n$, $\wdt{M}(\cdot \wedge \beta_n)$ is a martingale and the following transversality condition holds:
\begin{equation} \label{G0-transv-cnd}
\lim_{t\rightarrow \infty} \lim_{n\rightarrow \infty} \mbox{$\frac{1}{t}$} \EE[{U}_0(X(t\wedge \beta_n))] = 0.
\end{equation}
In addition, for a given $(y,z)$-policy, where $z$ denotes the {\em nominal} upper stock-level, defining $\mu_0^{(y,z)}$ to be the stationary measure of the controlled state process $X$ and $\mu_1^{(y,z)}$ to place point mass $\hat\kappa=\frac{1}{\hat{B\zeta}(y,z)\rule{0pt}{10pt}}$ (the long-run frequency of orders) on $\{(y,z)\}$, we have
$$ \int_{\cal E} A{U}_0(x)\, \mu_0^{(y,z)}(dx) + \hat{BU_0}(y,z)\, \hat\kappa = 0.$$
\end{prop}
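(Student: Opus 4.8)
The plan is to prove \propref{sS-in-Q2} in two stages: first the existence of a localizing sequence making $\wdt{M}(\cdot\wedge\beta_n)$ a martingale together with the transversality condition \eqref{G0-transv-cnd}, and then the stationary-measure identity. For the first stage I would follow the template of Proposition~4.2 of \cite{helm:18}, adapted via the $\hat{~}$-operation. Since $U_0 \in C^2(\I)$, an Ito/Dynkin argument applied between the ordering times to the process $X$ under the $(y,z)$-policy gives, on any stochastic interval with no jump, $dU_0(X(t)) = AU_0(X(t))\,dt + \sigma(X(t))U_0'(X(t))\,dW(t)$, and by \propref{qvi-ish} the drift equals $H_0^* - c_0(X(t))$. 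The localizing sequence $\{\beta_n\}$ is chosen in the usual way, e.g.\ $\beta_n = \inf\{t: |X(t)|\vee|U_0(X(t))|\vee\int_0^t \sigma^2(X(s))(U_0'(X(s)))^2\,ds \geq n\}$ intersected with exits from compact subsets of $\I$ between jumps, so that the stochastic integral is a genuine $L^2$-martingale on $[0,t\wedge\beta_n]$. The transversality condition \eqref{G0-transv-cnd} then follows from the growth control on $U_0$ near the boundaries --- \eqref{nat-bdry-limits} shows $U_0$ is finite-valued on $\E$ and the renewal/stationarity structure of the $(y,z)$-policy (Theorem~2.1 of \cite{sigm:93} and the density $\pi$ of the stationary law) bounds $\EE[U_0(X(t\wedge\beta_n))]$ uniformly enough that dividing by $t$ and sending $n\to\infty$ then $t\to\infty$ kills it; this is exactly the content of Remark~4.2 of \cite{helm:18} explaining why the linear-in-$S$ terms were excluded from the definitions of $g_0,\zeta$.

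For the second stage I would combine the first stage with the dynamic programming / adjoint identity. Apply the optional sampling to the decomposition of $U_0(X(t\wedge\beta_n))$: between jumps $U_0(X)$ accumulates $\int (AU_0)(X(s))\,ds$ plus the martingale increment, and at each ordering time $\tau_k\leq t$ it jumps by $BU_0(X(\tau_k-),X(\tau_k))$. Taking expectations, using the martingale property on $[0,t\wedge\beta_n]$, and dividing by $t$,
\begin{equation} \label{eq-adjoint-plan}
\mbox{$\frac{1}{t}$}\EE[U_0(X(t\wedge\beta_n))] - \mbox{$\frac{1}{t}$} U_0(x_0) = \int AU_0(x)\,\mu_{0,t}^{n}(dx) + \mbox{$\frac{1}{t}$}\EE\Big[\sum_{k} I_{\{\tau_k\leq t\wedge\beta_n\}} BU_0(X(\tau_k-),X(\tau_k))\Big],
\end{equation}
where $\mu_{0,t}^n$ is the (localized) average occupation measure. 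Using \eqref{e:mu-nu-integral} the jump term equals $\int \hat{BU_0}(y,z)\,\nu_{1,t}^n(dy\times dz)$, which for the $(y,z)$-policy is $\hat\kappa_t^n\,\hat{BU_0}(y,z)$ where $\hat\kappa_t^n$ is the localized order frequency. Now let $n\to\infty$ and then $t\to\infty$: the left side vanishes by \eqref{G0-transv-cnd} (and $U_0(x_0)=0$), $\mu_{0,t}^n \Rightarrow \mu_0^{(y,z)}$ and $\hat\kappa_t^n \to \hat\kappa = 1/\hat{B\zeta}(y,z)$ by the renewal-reward convergence underlying \eqref{eq-c0-integral-slln}, and since $AU_0 = H_0^* - c_0$ is continuous on $\overline{\E}$ (the last sentence of \propref{qvi-ish}) the occupation integral converges to $\int_\E AU_0(x)\,\mu_0^{(y,z)}(dx)$. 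This yields the claimed identity $\int_\E AU_0\,d\mu_0^{(y,z)} + \hat{BU_0}(y,z)\,\hat\kappa = 0$.

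The main obstacle I anticipate is the interchange of limits and the uniform integrability needed to pass \eqref{eq-adjoint-plan} to the limit: $U_0$ is unbounded near natural boundaries, $AU_0 = H_0^*-c_0$ may be unbounded (indeed $c_0(\pm\infty)=\infty$), and $\mu_0^{(y,z)}$ could a priori put mass near the boundaries. Controlling these requires the finiteness of the $(y,z)$-policy's cost --- which holds because $H_0(y,z)<\infty$ for $(y,z)\in\R$ by continuity of $\hat{Bg_0},\hat{B\zeta},\hat{c}_1$ and \cndref{cost-cnds}(a)'s integrability condition \eqref{c0-M-integrable} --- together with the explicit form of $\pi$ and $\hat\kappa$ from the generalization of Proposition~3.1 of \cite{helm:15b}, so that $\int c_0\,d\mu_0^{(y,z)}<\infty$ and hence $\int|AU_0|\,d\mu_0^{(y,z)}<\infty$. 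This is precisely where the transversality condition and the careful exclusion of fast-growing homogeneous solutions from $g_0,\zeta$ do the heavy lifting, and it is the step I would write out most carefully; everything else is bookkeeping parallel to \cite{helm:18}.
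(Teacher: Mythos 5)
Your proposal is correct and follows essentially the same route as the paper, which simply notes that the argument is identical to that of Proposition~4.2 of \cite{helm:18} with the $\hat{~}$-operation of \eqref{hat-def} inserted at the jump terms: It\^{o}/Dynkin between ordering times with a localizing sequence, transversality from the growth control on $U_0$ built into the definitions of $g_0$ and $\zeta$ (Remark~4.2 of \cite{helm:18}), and the renewal-reward passage $\mu_{0,t}\Rightarrow\mu_0^{(y,z)}$, $\nu_{1,t}\to\hat\kappa\,\delta_{\{(y,z)\}}$ to obtain the adjoint identity. The integrability points you flag (finiteness of $\int c_0\,d\mu_0^{(y,z)}$ via \eqref{c0-M-integrable} and the exclusion of the $K(S(x)-S(x_0))$ summands) are exactly the ones the paper relies on.
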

\section{Policy Class ${\cal A}_0$ and Optimality} \label{sect:optim}

We prove optimality of an $(s,S)$-type policy in the class of admissible policies ${\cal A}$ for models with random yield very similarly as in Section~5 of \cite{helm:18} for models with non-deficient deliveries.  However, Proposition~5.3 and Corollary~5.6 of that paper require extensive modifications to apply to models with deficient supply.  These results and their proofs are carefully presented in this section.

\color{black}
Again, for models having a reflecting boundary point $a$, we are only able to prove optimality of a $(\yzstar,\zzstar)$-ordering policy within a slightly smaller class of admissible policies than the class ${\cal A}$ .  (Note there is no restriction on the class $\A$ when $a$ is not a reflecting boundary.)

\begin{defn} \label{admissible-class-A0-defn} 
For models in which $a$ is a reflecting boundary point, the class $\A_0 \subset \A$ consists of those policies $(\tau,Z)$ for which the transversality condition on the local-time process $L_a$ of the inventory process $X$,
\begin{equation} \label{no-local}
\lim_{t\rightarrow \infty} t^{-1} \EE[L_a(t)] = 0
\end{equation}
holds. 
\end{defn} 

The definition of an appropriate class of test functions ${\cal D}$ is as in \cite{helm:18}.
\begin{defn} \label{class-D-def}
A function $f$ is in ${\cal D}$ provided it satisfies
\begin{itemize}
\item[(a)] $f\in C(\overline{\cal E}) \cap C^2(\I)$ and there exists $L_f < \infty$ such that 
\begin{itemize}
\item[(i)] $|f| \leq L_f$;
\item[(ii)] $(\sigma f')^2 \leq L_f(1+c_0)$;
\item[(iii)] $|Af| \leq L_f$;
\end{itemize}
\item[(b)] 
\begin{itemize}
\item[(i)] for all models, at each boundary where $c_0$ is finite, $Af$ extends continuously to the boundary with a finite value;
\item[(ii)] when $a$ is a reflecting boundary, $|f'(a)| < \infty$; and 
\item[(iii)] when $a$ is a sticky boundary and $c_0(a) < \infty$, $\sigma f'$ extends continuously at $a$ to a finite value. 
\end{itemize} 
\end{itemize}
\end{defn}

Using the class ${\cal D}$ we have the following version of the limiting adjoint equation for inventory models with random supply.  

\begin{prop} \label{bdd-adjoint}
Assume Conditions \ref{diff-cnd} -- \ref{cost-cnds}.  Let $(\tau,Z) \in {\cal A}_0$ with $J(\tau,Z) < \infty$ and let $X$ satisfy \eqref{controlled-dyn}.  For $t > 0$, define $\mu_{0,t}$, $\mu_{1,t}$ and $\nu_{1,t}$ by \eqref{mus-t-def} and let $\mu_0$ be such that $\mu_{0,t_j} \Rightarrow \mu_0$ as $j\rightarrow \infty$ for some sequence $\{t_j: j \in \NN\}$ with $\lim_{j\rightarrow \infty} t_j = \infty$.  Then the limiting adjoint relation
\begin{equation} \label{bar}
\forall f \in {\cal D}, \quad \int_{\overline{\cal E}} Af(x)\, {\mu}_0(dx) + \lim_{j\rightarrow \infty} \int_{\overline{\cal R}} \hat{Bf}(y,z)\, {\nu}_{1,t_j}(dy\times dz) = 0
\end{equation}
holds.
\end{prop}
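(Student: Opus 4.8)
The plan is to derive \eqref{bar} from Dynkin's formula applied to $f(X(\cdot))$ for $f\in{\cal D}$, after dividing by $t$ and letting $t\to\infty$ along $\{t_j\}$, exactly as for the non-deficient model in \cite{helm:18}; the only new ingredient is that the order times now contribute $\wdh{Bf}$ in place of $Bf$, a replacement supplied by the identity \eqref{e:mu-nu-integral}.

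Fix $f\in{\cal D}$ and let $\{\beta_n\}$ be a localizing sequence of stopping times increasing to $\infty$ (say, exit times of $X$ from compact subsets of $\E$, truncated at $n$). Using the construction of the controlled jump-diffusion $X$ in \cite{helm:19} and its boundary behavior at $a$ as in \cite{helm:18}, the generalized It\^o formula on $[0,t\wedge\beta_n]$ gives
\begin{align*}
f(X(t\wedge\beta_n)) ={}& f(x_0) + \int_0^{t\wedge\beta_n} Af(X(s))\,ds + \int_0^{t\wedge\beta_n} \sigma(X(s))f'(X(s))\,dW(s) \\
&{}+ \sum_{k:\,\tau_k\le t\wedge\beta_n} Bf(X(\tau_k-),X(\tau_k)) + R_n,
\end{align*}
where $R_n$ is a local-time term at $a$, present only when $a$ is a reflecting boundary and finite by \defref{class-D-def}(b)(ii); when $a$ is sticky the corresponding boundary contribution is absorbed into the $Af$-integral through the convention for $A$ at $a$, which is admissible because \defref{class-D-def}(b)(i),(iii) hold. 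On the stopped interval $\sigma f'(X(\cdot))$ is bounded, so the stochastic integral is a genuine martingale of zero mean. Taking expectations and letting $n\to\infty$ — using $|f|\le L_f$, $|Af|\le L_f$, $|Bf|\le 2L_f$, $\EE[\#\{k:\tau_k\le t\}] = t\,\nu_{1,t}(\ovl{\cal R})<\infty$ by \remref{masses-observation} (this is where $J(\tau,Z)<\infty$ enters), and $\EE[L_a(t)]<\infty$, together with dominated and monotone convergence — yields Dynkin's formula for $X$ on $[0,t]$ with no stochastic integral. Dividing by $t$, recognizing the measures of \eqref{mus-t-def}, and rewriting the order term via \eqref{e:mu-nu-integral},
\begin{align*}
\mbox{$\frac1t$}\,\EE[f(X(t))] - \mbox{$\frac1t$}\,f(x_0) ={}& \int_{\ovl{\cal E}} Af\,d\mu_{0,t} + \int_{\ovl{\cal R}} \wdh{Bf}\,d\nu_{1,t} + f'(a)\,\mbox{$\frac1t$}\,\EE[L_a(t)].
\end{align*}

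Now set $t=t_j\to\infty$. The left-hand side vanishes since $|f|\le L_f$; the local-time term vanishes because $(\tau,Z)\in{\cal A}_0$ forces $t^{-1}\EE[L_a(t)]\to 0$ and $|f'(a)|<\infty$. For the occupation integral, $Af$ is bounded by $L_f$ and, by \defref{class-D-def}(b)(i), continuous on $\ovl{\cal E}$ at each boundary point where $c_0$ is finite, while at a boundary point with $c_0=\infty$ the measure $\mu_0$ carries no mass (as noted after \propref{J-finite}); thus the discontinuity set of $Af$ is $\mu_0$-null and $\int Af\,d\mu_{0,t_j}\to\int_{\ovl{\cal E}} Af\,d\mu_0$ by the extended continuous mapping theorem for $\mu_{0,t_j}\Rightarrow\mu_0$. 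Consequently $\int\wdh{Bf}\,d\nu_{1,t_j}$ converges, necessarily to $-\int_{\ovl{\cal E}} Af\,d\mu_0$, which establishes both the existence of the limit in \eqref{bar} and the asserted identity.

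The step I expect to be the main obstacle is the rigorous It\^o/Dynkin expansion together with the martingale reduction: one must know that the controlled jump-diffusion of \cite{helm:19} admits this decomposition with the correct boundary term at a reflecting or sticky point $a$, and that the growth restriction \defref{class-D-def}(a)(ii) combined with $\EE\int_0^t c_0(X(s))\,ds<\infty$ (finite since $J(\tau,Z)<\infty$) legitimizes discarding the stochastic integral — this is precisely where the conditions defining ${\cal D}$ and ${\cal A}_0$ do their work. The weak-convergence passage for the $Af$-integral is a secondary point, settled by the boundary continuity/mass observations above; note too that $\{\nu_{1,t_j}\}$ itself need not converge, which is why \eqref{bar} is phrased with an explicit limit.
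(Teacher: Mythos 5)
Your proposal is correct and follows essentially the same route as the paper: the paper's proof simply invokes the Dynkin-formula argument of Proposition~5.1 in \cite{helm:18} to obtain the adjoint relation with $Bf$ integrated against $\mu_{1,t_j}$, and then converts to $\wdh{Bf}$ against $\nu_{1,t_j}$ via \eqref{e:mu-nu-integral} — precisely the two steps you carry out, with the localization, local-time, and boundary-mass considerations you describe being the content of the cited 2018 proof.
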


\begin{proof} Using the same arguments as those in the proof of Proposition 5.1 in \cite{helm:18}, we can derive 
\begin{displaymath}
 \int_{\overline{\cal E}} Af(x)\, {\mu}_0(dx) + \lim_{j\rightarrow \infty} \int_{\overline{\cal R}} {Bf}(y,v)\, \mu_{1,t_j}(dy\times dv) = 0.
\end{displaymath} 
Then \eqref{bar} follows from \eqref{e:mu-nu-integral}. 
\end{proof}

Using a similar proof as in Corollary~5.1 of \cite{helm:18} along with \propref{bdd-adjoint}, the existence of an optimal $(\yzstar,\zzstar)$ policy is obtained when $U_0 \in {\cal D}$.

\begin{cor}
Assume Conditions \ref{diff-cnd} -- \ref{cost-cnds} and \ref{extra-cnd}.  Suppose $U_0 \in {\cal D}$.  Then for every $(\tau,Z) \in {\cal A}_0$, $J(\tau,Z) \geq H_0^*$ and the $(\yzstar,\zzstar)$-ordering policy is optimal in the class ${\cal A}_0$, in which $(\yzstar,\zzstar)$ is given by \thmref{F-optimizers}.
\end{cor}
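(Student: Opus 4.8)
The plan is to split the statement into a lower bound --- $J(\tau,Z)\ge H_0^*$ for every $(\tau,Z)\in{\cal A}_0$ --- and the assertion that the $(\yzstar,\zzstar)$-policy attains it. The second half I would dispatch quickly from earlier material: by the renewal-reward identity \eqref{eq-sS-cost} the nominal $(\yzstar,\zzstar)$-ordering policy of \eqref{sS-tau-def} has long-term average cost $H_0(\yzstar,\zzstar)$, which equals $H_0^*$ by \thmref{F-optimizers}; it is admissible since $(\yzstar,\zzstar)\in{\cal R}$ makes this cost finite, and it lies in ${\cal A}_0$ because, when $a$ is reflecting, the controlled process is ordered up whenever it reaches the level $\yzstar\in{\cal E}$ and so does not accumulate local time at $a$, giving \eqref{no-local}. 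Everything then reduces to the lower bound, and the role of the hypothesis $U_0\in{\cal D}$ is exactly to make $U_0$ an admissible test function in the limiting adjoint relation \propref{bdd-adjoint}.

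For the lower bound I would fix $(\tau,Z)\in{\cal A}_0$; admissibility gives $J:=J(\tau,Z)<\infty$, and \eqref{e:cost-mu-nu-integral} gives $t^{-1}\EE[\,\cdots\,]=\int c_0\,d\mu_{0,t}+\int\hat c_1\,d\nu_{1,t}$ for every $t$. I would pick $s_i\to\infty$ along which this quantity converges to $J$; by \propref{mu0-tightness} the family $\{\mu_{0,s_i}\}$ is tight, so after passing to a subsequence $\{t_j\}$ I may also assume $\mu_{0,t_j}\Rightarrow\mu_0$ with $\mu_0$ a probability measure on $\overline{\cal E}$ (tightness prevents escape of mass), and by \propref{J-finite} $\int c_0\,d\mu_0\le J<\infty$; along $\{t_j\}$ one still has $\int c_0\,d\mu_{0,t_j}+\int\hat c_1\,d\nu_{1,t_j}\to J$.

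The core step is then to apply \propref{bdd-adjoint} with $f=U_0\in{\cal D}$ and this sequence, obtaining $\int_{\overline{\cal E}}AU_0\,d\mu_0+\lim_j\int_{\overline{\cal R}}\hat{BU_0}\,d\nu_{1,t_j}=0$. Substituting $AU_0=H_0^*-c_0$ from \propref{qvi-ish} and using that $\mu_0$ is a probability measure gives $\lim_j\int\hat{BU_0}\,d\nu_{1,t_j}=\int c_0\,d\mu_0-H_0^*$. Next, from the inequality $\hat{BU_0}+\hat c_1\ge0$ on $\overline{\cal R}$ in \propref{qvi-ish} I get $\int\hat{BU_0}\,d\nu_{1,t_j}\ge-\int\hat c_1\,d\nu_{1,t_j}$, hence in the limit $\int c_0\,d\mu_0-H_0^*\ge-\limsup_j\int\hat c_1\,d\nu_{1,t_j}$, that is, $H_0^*\le\int c_0\,d\mu_0+\limsup_j\int\hat c_1\,d\nu_{1,t_j}$. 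Finally, writing $\int\hat c_1\,d\nu_{1,t_j}=\bigl(\int c_0\,d\mu_{0,t_j}+\int\hat c_1\,d\nu_{1,t_j}\bigr)-\int c_0\,d\mu_{0,t_j}$, using that the bracket tends to $J$ and that $\int c_0\,d\mu_0\le\liminf_j\int c_0\,d\mu_{0,t_j}$ by lower semicontinuity of the nonnegative function $c_0$ (Portmanteau, as in the proof of \propref{J-finite}), I obtain $\limsup_j\int\hat c_1\,d\nu_{1,t_j}\le J-\int c_0\,d\mu_0$, and combining the last two displays yields $H_0^*\le J=J(\tau,Z)$.

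The main obstacle will be the treatment of the ordering measures $\{\nu_{1,t_j}\}$: these have uniformly bounded mass but need not converge weakly, since mass can drift off toward a natural boundary, so --- in contrast to $\mu_0$ --- one cannot simply pass to a weak limit $\nu_1$ and invoke lower semicontinuity of $\hat c_1$ (\propref{c1hat-lsc}). The way around this, as above, is to use only the \emph{exact} limit $\lim_j\int\hat{BU_0}\,d\nu_{1,t_j}$ furnished by the adjoint relation together with the pointwise bound $\hat{BU_0}\ge-\hat c_1$; this is precisely the point where $U_0\in{\cal D}$ is essential, since \propref{bdd-adjoint} is available only for test functions in ${\cal D}$. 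A secondary matter to keep straight is that $\mu_0$ must be a genuine probability measure (ensured by the tightness in \propref{mu0-tightness}) and that $\int c_0\,d\mu_0<\infty$ (from \propref{J-finite}), so that the identity $\int(H_0^*-c_0)\,d\mu_0=H_0^*-\int c_0\,d\mu_0$ and the ensuing rearrangements are legitimate.
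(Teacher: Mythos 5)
Your proof is correct and follows essentially the route the paper intends (it only cites Corollary~5.1 of \cite{helm:18} together with \propref{bdd-adjoint}): apply the limiting adjoint relation directly with $f=U_0\in{\cal D}$, substitute $AU_0=H_0^*-c_0$ and the inequality $\hat{BU_0}+\hat c_1\ge 0$ from \propref{qvi-ish}, and use tightness of $\{\mu_{0,t}\}$ plus $\int c_0\,d\mu_0\le J(\tau,Z)<\infty$ to justify the rearrangements. This is exactly the $n$-free special case of the machinery used to prove \thmref{G0-feasible}, and your handling of the non-tight ordering measures $\{\nu_{1,t_j}\}$ via the exact limit from the adjoint relation rather than a weak limit is the right way to avoid the loss of mass at natural boundaries.
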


Unfortunately, it is frequently the case that $U_0 \notin \mathcal{D}$ so it is necessary to approximate $U_0$ by functions in $\mathcal{D}$ and pass to a limit.  Recall from \eqref{nat-bdry-limits} that when $a$ is a natural boundary, $U_0(a) := \lim_{x\rightarrow a} U_0(x) = -\infty$ and similarly, $U_0(b):= \lim_{x\rightarrow b} U_0(x) = \infty$ when $b$ is natural.
To proceed, we impose the following set of conditions.  

\begin{cnd} \label{AG0-unif-int}
Let $U_0$ be as defined in \eqref{G0-def}.   
\begin{itemize}
\item[(a)] There exists some $L < \infty$ and some $y_1 > a$ such that 
\begin{itemize}
\item[(i)] for models having $c_0(a) = \infty$,
$$\frac{c_0(x)}{(1 + |{U}_0(x)|)^2} + \frac{(\sigma(x) {U}_0'(x))^2}{(1 + |{U}_0(x)|)^3} 
\leq L, \qquad a < x < y_1;$$
\item[(ii)] for models in which $c_0(a) < \infty$, there is some $\epsilon \in (0,1)$ such that 
$$\frac{(\sigma(x) {U}_0'(x))^2}{(1 + |{U}_0(x)|)^{2+\epsilon}} \leq L, \qquad a \leq x < y_1.$$
\end{itemize}

\item[(b)] There exists some $L < \infty$ and some $z_1 < b$ such that 
\begin{itemize}
\item[(i)] for models having $c_0(b) = \infty$,
$$\frac{c_0(x)}{(1 + |{U}_0(x)|)^2} 
+ \frac{(\sigma(x){U}_0'(x))^2}{(1+|{U}_0(x)|)(1+c_0(x))} \leq L, \qquad z_1 < x < b;$$
\item[(ii)] for models in which $c_0(b) < \infty$, there is some $\epsilon \in (0,1)$ such that 
$$\frac{(\sigma(x) {U}_0'(x))^2}{(1 + |{U}_0(x)|)^{2+\epsilon}} + \frac{(\sigma(x){U}_0'(x))^2}{(1+|{U}_0(x)|)(1+c_0(x))} \leq L, \qquad z_1 < x \leq b.$$
\end{itemize} 
\item[(c)] \begin{itemize}
\item[(i)] When ${U}_0(a) > -\infty$, or when $a$ is a sticky boundary with $c_0(a) < \infty$,  $\displaystyle\lim_{x\rightarrow a} \sigma(x) {U}_0'(x)$ exists and is finite; 
\item[(ii)] when $a$ is a reflecting boundary, ${U}_0'(a)$ exists and is finite; and 
\item[(iii)] when ${U}_0(b) < \infty$,  $\displaystyle\lim_{x\rightarrow b} \sigma(x) {U}_0'(x)$ exists and is finite.
\end{itemize}
\end{itemize}
\end{cnd}

First note that the bound in \cndref{AG0-unif-int}(b,i) at the boundary $b$ is more restrictive than the similar bound in \cndref{AG0-unif-int}(a,i) at $a$ since
\begin{equation} \label{bi-implies-ai}
\frac{(\sigma(x) U_0'(x))^2}{(1 + |U_0(x)|)^3} = \frac{(\sigma(x)U_0'(x))^2}{(1+|U_0(x)|)(1+c_0(x))} \cdot \frac{1+c_0(x)}{(1+|U_0(x)|)^2} \leq L(1+L).
\end{equation}
The need for tighter restrictions at the boundary $b$ than at $a$ is not unexpected since there is no way to control the process from diffusing upwards whereas ordering can prevent the process from diffusing towards $a$.

The reason for having two different conditions in \cndref{AG0-unif-int}(a,b) based on whether $c_0$ at the boundary is finite or infinite is that any limiting measure $\mu_0$ of the collection $\{\mu_{0,t}\}$ arising from an admissible policy $(\tau,Z)$ having finite cost $J(\tau,Z)$ must place no $\mu_0$-mass at a boundary where $c_0$ is infinite.  A weak limit $\mu_0$ may have positive mass at a boundary when $c_0$ is finite.  Also notice the subtle assumption in \cndref{AG0-unif-int}(a,ii) and (b,ii) that the bounds extend to the boundary whereas there is no assumption needed at the boundary in \cndref{AG0-unif-int}(a,i) and (b,i).  

A sequence of functions $U_n \in {\cal D}$ which will approximate the auxilliary function $U_0$ will be defined using the function $h(x) = (-\frac{1}{8} x^4 + \frac{3}{4} x^2 + \frac{3}{8})I_{[-1,1]}(x) + |x|\, I_{[-1,1]^c}(x)$ defined in Section 5 of \cite{helm:18}. While the formal definitions of $U_n$ and $G_n$ are similar, there are striking differences between these two approximations when analyzing integrals of the form $\int_{\overline{\cal R}} \hat{BU_n}(y,z) \, {\nu}_{1,t_j}(dy\times dz)$ and $\int_{\overline{\cal R}} BG_n(y,z) \, \mu_{1,t_j}(dy\times dz)$, see the proof of Proposition \ref{BGn-c1-rel} below.
\color{black}

In the next lemma, we define  the sequence of functions $\{{U_n}: n\in \NN\} \subset {\cal D}$ which approximate $U_0$ and in the lemma following that one we examine the convergence of $A{U_n}$ and ${B{U_n}}$. 
\color{black}
\begin{lem}  \label{lem:G0-approx}
Assume Conditions \ref{diff-cnd} -- \ref{cost-cnds}, \ref{extra-cnd} and \ref{AG0-unif-int} with $U_0$ defined by \eqref{G0-def}.  For each $n \in \NN$, define the function $U_n$ by 
\begin{equation} \label{Gn-def}
U_n = \frac{U_0}{1+\frac{1}{n} h(U_0)}.
\end{equation}
Then $U_n \in {\cal D}$ and \begin{align*} 
 & \lim_{n\rightarrow \infty} AU_n(x) = AU_0(x), \quad \forall x \in {\cal I}, \\
& \lim_{n\rightarrow \infty} \wdh{BU_n}(y,z) =  \wdh{BU_0}(y,z), \quad \forall (y,z) \in \overline{\cal R}.
\end{align*}  Moreover, at each boundary where $c_0$ is finite, $\lim_{n\rightarrow \infty} AU_n \geq AU_0$.
\end{lem}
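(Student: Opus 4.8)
The plan is to treat $U_n$ as a composition: with $\phi_n(u):=u/\bigl(1+\tfrac1n h(u)\bigr)$ one has $U_n=\phi_n\circ U_0$, so the whole lemma reduces to elementary estimates on the one-variable functions $\phi_n$ combined with the boundary bounds of \cndref{AG0-unif-int}. First I would record the properties of $\phi_n$ that I need. Using only that $h$ is even and $C^2$, with $h\ge\tfrac38$, $|h'|\le1$, $h(u)=|u|$ for $|u|\ge1$, $uh'(u)\ge0$, and $h(u)-uh'(u)\ge0$ (it equals $\tfrac38(u^2-1)^2$ on $[-1,1]$ and $0$ for $|u|\ge1$), one checks: $\phi_n\in C^2(\R)$ with $\phi_n(0)=0$, $|\phi_n(u)|\le\min\{|u|,n\}$, $0<\phi_n'\le1$; for $|u|\ge1$, $\phi_n'(u)=n^2/(n+|u|)^2$ and $\phi_n''(u)=-2\,\mathrm{sgn}(u)\,n^2/(n+|u|)^3$; $\phi_n(u)\to u$, $\phi_n'(u)\to1$, $\phi_n''(u)\to0$ pointwise as $n\to\infty$; and $\phi_n'(\pm\infty)=\phi_n''(\pm\infty)=0$, $\phi_n(\pm\infty)=\pm n$. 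The second ingredient is the chain rule $U_n'=\phi_n'(U_0)U_0'$ and
$$AU_n=\phi_n'(U_0)\,AU_0+\tfrac12\,\phi_n''(U_0)\,(\sigma U_0')^2=\phi_n'(U_0)\,(H_0^*-c_0)+\tfrac12\,\phi_n''(U_0)\,(\sigma U_0')^2,$$
where $AU_0=H_0^*-c_0$ comes from \propref{qvi-ish}; in particular $(\sigma U_n')^2=(\phi_n'(U_0))^2(\sigma U_0')^2\le(\sigma U_0')^2$.

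Next I would verify $U_n\in{\cal D}$. That $U_n\in C(\overline{\cal E})\cap C^2(\I)$ and $|U_n|\le n$ is immediate (note $\phi_n$ extends continuously to $\pm\infty$). For \defref{class-D-def}(a,ii)--(a,iii) I would split $\I$ into a compact interval $[y_1,z_1]$ --- where $\sigma,U_0,U_0',c_0$ are bounded, so both $(\sigma U_n')^2$ and $|AU_n|$ are bounded by constants --- and the boundary neighbourhoods $(a,y_1)$, $(z_1,b)$. On the latter $|U_0|$ is large, so the explicit forms of $\phi_n',\phi_n''$ apply, and \cndref{AG0-unif-int}(a)--(b) provide the polynomial bounds $c_0\le L(1+|U_0|)^2$ (where the boundary value of $c_0$ is $\infty$) and $(\sigma U_0')^2\le L'(1+|U_0|)^3$ (invoking \eqref{bi-implies-ai} at $b$, and $(1+|U_0|)^{2+\epsilon}\le(1+|U_0|)^3$ in the finite-$c_0$ cases). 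Since $n^2/(n+t)^2\le n^2/(1+t)^2$ and $\sup_{t\ge0}n^4(1+t)^3/(n+t)^4<\infty$ for $n\ge1$, these yield $(\sigma U_n')^2\le C_n\le C_n(1+c_0)$, $\phi_n'(U_0)c_0\le n^2L$ and $\tfrac12|\phi_n''(U_0)|(\sigma U_0')^2\le n^2L'$, hence $|AU_n|\le L_{U_n}$ with an $n$-dependent constant. For \defref{class-D-def}(b), at a boundary where $c_0$ is finite $\phi_n'(U_0)$ and $\phi_n''(U_0)$ extend continuously (with value $0$ when $|U_0|\to\infty$ there), and \cndref{AG0-unif-int}(c) supplies the finite limits of $\sigma U_0'$ (or of $U_0'$ at a reflecting $a$) needed so that $AU_n$, $U_n'(a)$ and $\sigma U_n'$ have the required finite boundary behaviour.

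The two pointwise limits are then routine. For fixed $x\in\I$ every quantity in the chain-rule identity is finite and $\phi_n'(U_0(x))\to1$, $\phi_n''(U_0(x))\to0$, so $AU_n(x)\to AU_0(x)$. For fixed $(y,z)\in\overline{\cal R}$, $Q(\cdot;y,z)$ is carried by the compact set $[y,z]\subset{\cal E}$ on which $U_0$ is bounded, so $|U_n(v)-U_n(y)|\le2\|U_0\|_{[y,z]}$; since $\phi_n(u)\to u$ pointwise, dominated convergence gives $\wdh{BU_n}(y,z)=\int\bigl(U_n(v)-U_n(y)\bigr)Q(dv;y,z)\to\wdh{BU_0}(y,z)$.

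Finally, the boundary inequality, which I expect to be the most delicate point. Fix $p\in\{a,b\}$ with $c_0(p)<\infty$. If $p$ is attainable (or $p=b$ is an entrance boundary) then $U_0(p)$ is finite and, by \cndref{AG0-unif-int}(c), $\sigma U_0'$ has a finite limit at $p$; letting $x\to p$ and then $n\to\infty$ in the chain-rule identity gives $\lim_n AU_n(p)=AU_0(p)$. If $p$ is a natural boundary with $c_0(p)<\infty$, then $|U_0|\to\infty$ at $p$, so $\phi_n'(U_0)\to0$, and $\phi_n''(U_0)(\sigma U_0')^2\to0$ (since $|\phi_n''(U_0)|(\sigma U_0')^2\le2n^2L(1+|U_0|)^{\epsilon-1}$ by \cndref{AG0-unif-int}(a,ii)/(b,ii) together with $(n+|U_0|)^3\ge(1+|U_0|)^3$), so the continuous extension of $AU_n$ to $p$ equals $0$ for every $n$; meanwhile $AU_0(p)=H_0^*-c_0(p)<0$ because \cndref{extra-cnd} provides $(y_i,z_i)$ with $H_0^*\le H_0(y_i,z_i)<c_0(p)$. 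Hence $\lim_n AU_n(p)=0\ge AU_0(p)$, as claimed. The main obstacle is the bookkeeping in the $U_n\in{\cal D}$ step --- matching the $(1+|U_0|)$-growth of $c_0$ and $(\sigma U_0')^2$ from \cndref{AG0-unif-int} against the $n^2/(n+|U_0|)^2$ and $n^2/(n+|U_0|)^3$ decay of $\phi_n'$ and $\phi_n''$ --- together with the observation that truncation forces $AU_n$ to vanish at a natural boundary with finite holding cost, which is exactly why only the inequality (and \cndref{extra-cnd}) can be asserted there.
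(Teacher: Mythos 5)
Your proof is correct and follows essentially the same route as the paper: the paper's own proof simply defers the facts that $U_n \in {\cal D}$ and $AU_n \to AU_0$ to Lemmas 5.1--5.2 of \cite{helm:18} (which use exactly your composition-with-$\phi_n$ and chain-rule argument, with \cndref{AG0-unif-int} supplying the boundary bounds), and handles $\wdh{BU_n} \to \wdh{BU_0}$ by the same pointwise-convergence-plus-bounded-convergence step you give, using that $Q(\cdot\,;y,z)$ is carried by $[y,z]$ where $U_0$ is bounded. Your write-up just supplies in full the details the paper cites, including the correct explanation of why only the inequality $\lim_n AU_n \geq AU_0$ holds at a natural boundary with finite $c_0$.
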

\begin{proof} 
That $U_n \in {\cal D}$ and the convergence of $AU_{n}$ can be proven using similar arguments as those in the proofs of 
  Lemmas 5.1 and 5.2 of \cite{helm:18}. Similarly, we can show that $\lim_{n\rightarrow \infty}{BU_n}(y,v) =  {BU_0}(y,v)$ for all $(y, v) \in \ovl{\mathcal R}$. This, together with the bounded convergence theorem, implies the desired convergence of $ \wdh{BU_n}(y,z)$ to $\wdh{B U_{0}}(y,z)$. 
\end{proof}


The following proposition gives the first important result involving $AU_n$ and $c_0$.

\begin{prop} \label{AGn-c0-rel}
Assume Conditions \ref{diff-cnd} -- \ref{cost-cnds}, \ref{extra-cnd} and \ref{AG0-unif-int} hold.  Let $(\tau,Z) \in {\cal A}_0$ with $J(\tau,Z) < \infty$, $X$ satisfy \eqref{controlled-dyn}, $ {\mu}_{0,t}$ be defined by \eqref{mus-t-def} and let $ {\mu}_0$ be any weak limit of $\{{\mu}_{0,t}\}$ as $t\rightarrow \infty$.  Define $U_n$ by \eqref{Gn-def}.  Then 
$$\liminf_{n\rightarrow \infty} \int_{\overline{\cal E}} (AU_n(x) + c_0(x))\, {\mu}_0(dx) \geq \int_{\overline{\cal E}} (AU_0(x) + c_0(x))\, {\mu}_0(dx) \geq H_0^*.$$
\end{prop}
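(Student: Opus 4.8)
The plan is to prove the two inequalities separately, with the first (the $\liminf$ comparison) being the substantive one and the second ($\int (AU_0 + c_0)\, d\mu_0 \geq H_0^*$) following quickly from \propref{qvi-ish}.

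\textbf{Second inequality first.} By \propref{qvi-ish}, $U_0$ satisfies $AU_0(x) + c_0(x) - H_0^* = 0$ for all $x \in \I$, and this relation extends by continuity to $\overline{\cal E}$. Since $\mu_0$ is a probability measure on $\overline{\cal E}$ (see \remref{masses-observation} and \propref{mu0-tightness}), integrating gives $\int_{\overline{\cal E}} (AU_0(x) + c_0(x))\, \mu_0(dx) = H_0^* \int_{\overline{\cal E}} \mu_0(dx) = H_0^*$. (In fact equality holds, but the inequality is all that is claimed.) One should note here that by \propref{J-finite}, $\mu_0$ places no mass at a boundary where $c_0 = \infty$, so the integrand is well-defined $\mu_0$-a.e.\ and the continuous extension of $AU_0 + c_0$ is used only at finite boundaries.

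\textbf{First inequality.} Here I would split $\overline{\cal E}$ into a compact ``interior'' part $[y_1, z_1]$ (with $y_1, z_1$ from \cndref{AG0-unif-int}) and the two boundary neighborhoods $[a, y_1)$ (or $(a,y_1)$) and $(z_1, b]$ (or $(z_1,b)$). On the compact set $[y_1,z_1]$, \lemref{lem:G0-approx} gives $AU_n \to AU_0$ pointwise; combined with the uniform bound $|AU_n| \leq L_{U_n}$ — which needs to be made uniform in $n$, and this follows from the explicit form \eqref{Gn-def} together with the local boundedness of $c_0$ and $\sigma U_0'$ on $[y_1,z_1]$ — the bounded convergence theorem yields $\int_{[y_1,z_1]} AU_n\, d\mu_0 \to \int_{[y_1,z_1]} AU_0\, d\mu_0$. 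On the boundary neighborhoods, the point is that \cndref{AG0-unif-int}(a,b) are precisely the conditions guaranteeing that $\{AU_n + c_0\}$ restricted to these neighborhoods is uniformly integrable with respect to $\mu_0$ (using that $\mu_0$ integrates $c_0$ by \propref{J-finite}); together with the pointwise convergence $AU_n + c_0 \to AU_0 + c_0$ on $\I$ from \lemref{lem:G0-approx}, and the statement that $\lim_n AU_n \geq AU_0$ at finite boundaries, a Fatou-type / Vitali-convergence argument gives $\liminf_n \int (AU_n + c_0)\, d\mu_0 \geq \int (AU_0 + c_0)\, d\mu_0$. Adding $c_0 \geq 0$ back on the interior piece (where it causes no trouble) and combining the three regions gives the first inequality. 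This parallels the proof of the corresponding statement (Corollary~5.6 / Proposition~5.3) in \cite{helm:18}, and in fact since $U_n$ and $AU_n$ here agree with $G_n$ and $AG_n$ there up to the constant $H_0^*$ versus $F_0^*$, the argument transfers essentially verbatim once \cndref{AG0-unif-int} is invoked in place of its analogue.

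\textbf{Main obstacle.} The delicate point is the uniform-integrability / uniform-bound bookkeeping near the boundaries: one must verify that the quotient construction \eqref{Gn-def} does not spoil the bounds in \cndref{AG0-unif-int}, i.e.\ that $(\sigma U_n')^2$ and $AU_n$ are controlled by $(1+|U_0|)$-powers and $c_0$ uniformly in $n$, and that the two different forms of the condition (for $c_0$ finite vs.\ infinite at the boundary) each produce the uniform integrability needed. The case analysis — $a$ natural vs.\ attainable vs.\ reflecting vs.\ sticky, and $b$ natural vs.\ entrance, with $c_0$ finite or infinite in each — is where the bulk of the care goes, but each case is handled by the corresponding clause of \cndref{AG0-unif-int} exactly as in \cite{helm:18}.
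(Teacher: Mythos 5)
Your proposal is correct and follows essentially the same route as the paper, which itself omits the proof and merely notes that it uses \cndref{AG0-unif-int} and mirrors Proposition~5.2 of \cite{helm:18}. Your two-step structure --- the identity $AU_0+c_0\equiv H_0^*$ from \propref{qvi-ish} integrated against the probability measure $\mu_0$ (which places no mass where $c_0=\infty$), plus pointwise convergence of $AU_n$ from \lemref{lem:G0-approx} combined with the boundary bounds of \cndref{AG0-unif-int} in a Fatou/uniform-integrability argument --- is exactly what that reference supplies.
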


The proof uses \cndref{AG0-unif-int} and is again very similar to the proof of Proposition~5.2 for non-deficient supply models in \cite{helm:18}.  It is therefore left to the reader. 

We next establish a similar result involving $\wdh{BU_n}$ and $\wdh{c_1}$, though the lack of tightness of $\{ {\nu}_{1,t}\}$ means that the result cannot be expressed in terms of a limiting measure.

\begin{prop} \label{BGn-c1-rel}
Assume Conditions \ref{diff-cnd} - \ref{cost-cnds}, \ref{extra-cnd} and \ref{AG0-unif-int} hold.  Let $(\tau,Z) \in {\cal A}_0$ with $J(\tau,Z) < \infty$ and $X$ satisfy \eqref{controlled-dyn}.  Let $\{t_j:j\in \NN\}$ be a sequence such that $\lim_{j\rightarrow \infty} t_j= \infty$ and 
$$J(\tau,Z) = \lim_{j\rightarrow \infty} \mbox{$\frac{1}{t_j}$}\EE\Bigg[\int_0^{t_j} c_0(X(s))\, ds + \sum_{k=1}^\infty I_{\{\tau_k\leq t_j\}} c_1(X(\tau_k-),X(\tau_k))\Bigg].$$ 
For each $j$, define $\nu_{1,t_j}$ by \eqref{mus-t-def} and, with $U_0$ given in \eqref{G0-def}, define $U_n$ by \eqref{Gn-def}.  Then 
\begin{equation}  \label{Fatou-bdd} 
\liminf_{n\rightarrow \infty} \liminf_{j\rightarrow \infty} \int_{\overline{\cal R}} (\wdh{BU_n}(y,z) + \wdh{c_1}(y,z))\,  \nu_{1,t_j}(dy\times dz) \geq 0.
\end{equation}
\end{prop}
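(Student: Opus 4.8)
The goal \eqref{Fatou-bdd} is a Fatou-type lower bound. The plan is to first show that the integrand $\wdh{BU_n}(y,z)+\wdh{c}_1(y,z)$ is, for each fixed $n$, bounded below by a quantity that we can control, and then to pass to the limit in $j$ and afterwards in $n$. Recall from \propref{qvi-ish} the key inequality $\wdh{BU_0}(y,z)+\wdh{c}_1(y,z)\geq 0$ on $\ovl{\cal R}$, which is the analogue of the quasi-variational inequality. The obstruction is that $U_n\neq U_0$, so this nonnegativity is only asymptotically available through \lemref{lem:G0-approx}, which gives $\wdh{BU_n}(y,z)\to\wdh{BU_0}(y,z)$ pointwise but not uniformly, and the measures $\nu_{1,t_j}$ do \emph{not} form a tight family (only $\limsup_j\nu_{1,t_j}(\ovl{\cal R})\leq J(\tau,Z)/k_1<\infty$ by \remref{masses-observation}). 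So a naive interchange of limits and integral is not licensed.

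The route I would take is the following. First, split $\ovl{\cal R}$ into a large compact piece $K$ and its complement. On the complement — which is a neighborhood of the outer boundary of ${\cal R}$, i.e.\ pairs $(y,z)$ with $y$ near $a$ or $z$ near $b$ or $z-y$ near $0$ — I would use the growth/integrability structure built into \cndref{AG0-unif-int}, exactly as in the treatment of $G_n$ in \cite{helm:18}, to show that the contribution of $\int_{\ovl{\cal R}\setminus K}\wdh{BU_n}(y,z)\,\nu_{1,t_j}(dy\times dz)$ is, uniformly in $j$ and $n$, bounded below by $-\e$ once $K$ is taken large enough; here the bound $\wdh{c}_1\geq k_1>0$ and the finite-mass estimate on $\nu_{1,t_j}$ absorb the diagonal contribution, while the boundary pieces are handled by the conditions on $(\sigma U_0')^2/(1+|U_0|)^{\cdots}$ together with $\sum_k I_{\{\tau_k\leq t_j\}}$ having controlled expectation. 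On the compact set $K$, the convergence $\wdh{BU_n}\to\wdh{BU_0}$ from \lemref{lem:G0-approx} is uniform (continuous functions on a compact set, monotone-ish approximation), so
\[
\int_K(\wdh{BU_n}+\wdh{c}_1)\,\nu_{1,t_j}(dy\times dz)\geq \int_K(\wdh{BU_0}+\wdh{c}_1)\,\nu_{1,t_j}(dy\times dz)-\rho_n\,\nu_{1,t_j}(\ovl{\cal R}),
\]
with $\rho_n\to 0$, and by \propref{qvi-ish} the integrand $\wdh{BU_0}+\wdh{c}_1$ is $\geq 0$ on all of $\ovl{\cal R}$, in particular on $K$, so the first term on the right is nonnegative.

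Putting the two pieces together, for each fixed large $K$ and each $n$,
\[
\int_{\ovl{\cal R}}(\wdh{BU_n}+\wdh{c}_1)\,\nu_{1,t_j}(dy\times dz)\geq -\e-\rho_n\,\nu_{1,t_j}(\ovl{\cal R}).
\]
Taking $\liminf_{j\to\infty}$ and using $\limsup_j\nu_{1,t_j}(\ovl{\cal R})\leq J(\tau,Z)/k_1<\infty$ gives a lower bound of $-\e-\rho_n J(\tau,Z)/k_1$; then $\liminf_{n\to\infty}$ kills $\rho_n$, leaving $-\e$; finally letting $\e\downarrow 0$ (equivalently $K\uparrow\ovl{\cal R}$) yields \eqref{Fatou-bdd}. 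The main obstacle, and where the genuine work lies, is the uniform-in-$(j,n)$ boundary estimate on $\int_{\ovl{\cal R}\setminus K}\wdh{BU_n}\,d\nu_{1,t_j}$: because $\nu_{1,t_j}$ is a \emph{nominal} ordering measure on the (state,action) space and $\wdh{BU_n}$ is $U_n$ averaged against $Q(\cdot\,;y,z)$, one must use the support property \cndref{Q-cnds}(a,ii) together with the Assured Supply Commitment \cndref{Q-cnds}(c) (via \lemref{lem-non-compact}) to convert the bound into one on the stock-level measure $\mu_{1,t_j}$ and then exploit the growth conditions of \cndref{AG0-unif-int} on $U_0$ near $a$ and $b$ — this is precisely the point flagged in the text as differing strikingly from the $G_n$ analysis in \cite{helm:18}.
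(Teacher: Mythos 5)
Your overall architecture (split $\ovl{\cal R}$ into a compact core $K$ and a boundary collar, use $\wdh{BU_0}+\wdh{c}_1\geq 0$ from \propref{qvi-ish} together with uniform convergence $\wdh{BU_n}\to\wdh{BU_0}$ on the core, and control the collar separately) is reasonable, and the compact-core step is correct: on a set bounded away from $a$ and $b$, where $|U_0|\leq C$, one has $|BU_n(y,v)-BU_0(y,v)|\leq 2C^2/n$, so your displayed inequality holds with $\rho_n=O(1/n)$ and the mass bound of \remref{masses-observation} finishes that piece. The gap is that the collar estimate --- which is the entire substance of the proposition --- is asserted rather than proved, and the tool you cite for it is the wrong one. \cndref{AG0-unif-int} bounds $(\sigma U_0')^2$ and $c_0$ against powers of $1+|U_0|$; in the paper it is used only for the occupation-measure term in \propref{AGn-c0-rel} and to place $U_n$ in ${\cal D}$. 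It gives no control over $BU_n(y,v)=U_n(v)-U_n(y)$ integrated against $Q(\cdot\,;y,z)$ for $(y,z)$ near the boundary of $\ovl{\cal R}$, which is what your collar bound requires.

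What actually makes the collar work is an algebraic decomposition $c_1+BU_n\geq R_{n,1}+R_{n,2}$ (\lemref{lem-Rn1+Rn2}) followed by a region-by-region analysis (\lemref{lem-Rn2} and \lemref{lem-Rn1}): near the corners $(a,a)$ and $(b,b)$ one uses \eqref{double-g0-zeta-diff-at-a} with \cndref{extra-cnd} to get $BU_0=Bg_0-H_0^*B\zeta\geq 0$ \emph{pointwise}, hence $R_{n,1}\geq 0$ there; along the left edge one uses $U_0(a)=-\infty$; and along the top edge the point is genuinely delicate, because $BU_0(y,v)+c_1(y,v)$ need not be pointwise nonnegative for $v$ in the middle of $(y,z]$ (only its $Q$-average is nonnegative), while the truncation caps $U_n$ at a level of order $n$. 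Only the Assured Supply Commitment, via the lower bound $\inf_y Q((z_1,b);y,z)\geq \delta/2$, guarantees that for $n$ sufficiently large the capped positive contribution from $v$ near $b$ outweighs the bounded negative contribution from the middle. Your closing sentence gestures at this but supplies no quantitative balancing, and the bound you claim to hold ``uniformly in $j$ and $n$'' is not what such an argument yields: one obtains a bound of the form $-\e_n$ with $\e_n\to 0$, uniform in $j$ but valid only for $n$ beyond a threshold depending on $\delta$ --- which suffices for the double $\liminf$ but is a different statement. As written, the hard half of the proof is missing.
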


The proof of this  proposition is very long and technical. In a nutshell, the desired assertion \eqref{Fatou-bdd} follows from the progression of   Lemmas \ref{lem-U0-bdd}, \ref{lem-Rn1+Rn2}, \ref{lem-Rn2}, and \ref{lem-Rn1}. Let us briefly describe the idea here. First we observe in Lemma  \ref{lem-U0-bdd} that \eqref{Fatou-bdd} holds true if the function $U_{0}$ is uniformly bounded. Consequently, we only need to focus on the case when $U_{0}$ is unbounded, which, necessarily implies that  either $U_{0}(a) =-\infty$ or $U_{0}(b) =\infty$. We present only the case when $U_{0}(a) =-\infty$ and $U_{0}(b) =\infty$; the other cases (either  $U_{0}(a) >-\infty$ and $U_{0}(b) =\infty$, or  $U_{0}(a) =-\infty$ and $U_{0}(b) < \infty$) follow from similar arguments and are left to the reader.  Lemma \ref{lem-Rn1+Rn2} observes that the integrand $ \wdh{BU_n}(y,z) + \wdh{c_1}(y,z)$ of \eqref{Fatou-bdd} is bounded below by  the sum of two terms $\wdh R_{n,1} $ and $\wdh R_{n,2}$. Then we show in Lemmas  \ref{lem-Rn2} and \ref{lem-Rn1} that the double limits inferior involving  $\wdh R_{n,2} $ and $\wdh R_{n,1}$, respectively, are nonnegative, thus establishing \eqref{Fatou-bdd}. 

The analysis of each double limit inferior follows similar lines of reasoning, though significantly more effort is required for the term involving $\wdh R_{n,1} $. First $\ovl{\cal R}$ is partitioned into appropriate  subsets in the proofs of Lemmas  \ref{lem-Rn2} and \ref{lem-Rn1}. Detailed analyses reveal that the inner integrand  $\wdh R_{n,1} $  or $\wdh R_{n,2}$  is bounded below over these subsets of $\ovl{\mathcal R}$;  and taking limits leads to the desired result.  The limiting result for $\wdh R_{n,1}$ requires the ASC condition of \cndref{Q-cnds}(c) for the region $\Gamma_4$ in \figref{partition}.  For the subset $\Gamma_5$ of $\ovl{\mathcal R}$ in \figref{partition}, the analysis of the double limit inferior requires subtle  weak convergence analysis related to the measures $\{\nu_{1,t_j}\}$ as well.  

We now supply the details of the arguments.

\begin{lem}\label{lem-U0-bdd}
Let $U_0$ be defined by \eqref{G0-def}.  If $U_{0}$ is uniformly bounded, then \eqref{Fatou-bdd} holds.
\end{lem}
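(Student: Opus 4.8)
The plan is to reduce everything to elementary estimates by observing that, when $U_0$ is uniformly bounded, the approximants $U_n$ from \eqref{Gn-def} converge to $U_0$ \emph{uniformly} on $\overline{\cal E}$, not merely pointwise as in \lemref{lem:G0-approx}. Set $K:=\sup_{\overline{\cal E}}|U_0|<\infty$; since $h$ is continuous and nonnegative, $h_K:=\sup_{|u|\le K}h(u)<\infty$ and $1\le 1+\tfrac1n h(U_0)\le 1+h_K/n$, so from \eqref{Gn-def}
$$|U_n(x)-U_0(x)|=|U_0(x)|\,\frac{\tfrac1n h(U_0(x))}{1+\tfrac1n h(U_0(x))}\le \frac{Kh_K}{n},\qquad x\in\overline{\cal E}.$$
Hence $\|BU_n-BU_0\|_\infty\le 2\|U_n-U_0\|_\infty$ and, since each $Q(\cdot\,;y,z)$ is a probability measure, $\sup_{\overline{\cal R}}|\widehat{BU_n}-\widehat{BU_0}|\le 2Kh_K/n$.

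Next I would invoke \propref{qvi-ish}, which gives $\widehat{BU_0}(y,z)+\widehat{c}_1(y,z)\ge 0$ for every $(y,z)\in\overline{\cal R}$. Fix $n,j\in\NN$. Writing $\widehat{BU_n}+\widehat{c}_1=(\widehat{BU_n}-\widehat{BU_0})+(\widehat{BU_0}+\widehat{c}_1)$ and integrating against $\nu_{1,t_j}$, which is a finite nonnegative measure on $\overline{\cal R}$ by \remref{masses-observation}, the first summand is bounded hence $\nu_{1,t_j}$-integrable and the second is nonnegative, so the split is legitimate and
$$\int_{\overline{\cal R}}(\widehat{BU_n}+\widehat{c}_1)\,d\nu_{1,t_j}\;\ge\;\int_{\overline{\cal R}}(\widehat{BU_n}-\widehat{BU_0})\,d\nu_{1,t_j}\;\ge\;-\frac{2Kh_K}{n}\,\nu_{1,t_j}(\overline{\cal R}).$$

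Finally, taking $\liminf_{j\to\infty}$ and using $\limsup_{j}\nu_{1,t_j}(\overline{\cal R})\le J(\tau,Z)/k_1<\infty$ from \remref{masses-observation}, one gets $\liminf_{j\to\infty}\int_{\overline{\cal R}}(\widehat{BU_n}+\widehat{c}_1)\,d\nu_{1,t_j}\ge -\tfrac{2Kh_K}{n}\,J(\tau,Z)/k_1$; letting $n\to\infty$ yields \eqref{Fatou-bdd}. I do not expect any genuine obstacle here — this bounded case is separated out precisely because it is the easy one — the only points requiring care being the order of the iterated limits (inner over $j$, outer over $n$) and the fact that the integral split above is valid because $\widehat{BU_n}-\widehat{BU_0}$ is bounded while $\nu_{1,t_j}$ has finite total mass.
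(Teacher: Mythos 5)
Your proof is correct and follows essentially the same route as the paper: both decompose $\widehat{BU_n}+\widehat{c}_1$ as $(\widehat{BU_0}+\widehat{c}_1)+(\widehat{BU_n}-\widehat{BU_0})$, discard the first (nonnegative by \propref{qvi-ish}) summand, bound the second uniformly by an $O(1/n)$ quantity using the boundedness of $U_0$ and $h(U_0)$, and finish with the mass bound $\limsup_j\nu_{1,t_j}(\overline{\cal R})\le J(\tau,Z)/k_1$ from \remref{masses-observation}. The only (cosmetic) difference is that you bound $\|U_n-U_0\|_\infty$ first and then apply $B$, whereas the paper bounds $BU_n-BU_0$ directly; the resulting constants coincide since $h_K=K$ for $K\ge 1$.
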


\begin{proof}
Suppose $\sup_{x\in \I} |U_{0}(x)| \le K$ for some positive constant $K\geq 1$. Recall the nonnegativity of $\wdh{BU}_0 + \wdh{c}_1$ from \propref{qvi-ish}.  Then 
\begin{align} \label{e5.6}
\nonumber & \int_{\overline{\cal R}} (\wdh{BU_n}(y,z) + \wdh{c_1}(y,z))\,   \nu_{1,t_j}(dy\times dz) \\
\nonumber & \ \ =\int_{\overline{\cal R}} (\wdh{BU_0}(y,z) + \wdh{c_1}(y,z))\,   \nu_{1,t_j}(dy\times dz)  + \int_{\overline{\cal R}} (\wdh{BU_n}(y,z) -\wdh{BU_0}(y,z))\,   \nu_{1,t_j}(dy\times dz)  \\
  &\ \  \ge \int_{\overline{\cal R}} (\wdh{BU_n}(y,z) -\wdh{BU_0}(y,z))\,   \nu_{1,t_j}(dy\times dz).
\end{align}
Now using the definition of $U_{n}(\cdot)$,  for any $(y,v) \in \ovl{\mathcal R}$ 
\begin{align*} 
 | B U_{n}(y,v) - BU_{0}(y,v)| & = \bigg|\frac{U_{0}(v)}{1+ \frac1n h(U_{0}(v))} - \frac{U_{0}(y)}{1+ \frac1n h(U_{0}(y))} - U_{0}(v) + U_{0}(y) \bigg|  \\
  & = \bigg|\frac{U_{0}(y) h(U_{0}(y))}{n(1+ \frac1n h(U_{0}(y)))}- \frac{U_{0}(v) h(U_{0}(v))}{n(1+ \frac1n h(U_{0}(v)))}\bigg|\\
  & \le \mbox{$\frac{2K^{2}}{n}$}.
\end{align*} 
As a result, for any $(y,z) \in \ovl{\mathcal R}$, we have 
\begin{align*} 
 \wdh{BU_n}(y,z) -\wdh{BU_0}(y,z)) & = \int_{y}^{z}[B U_{n}(y,v) - BU_{0}(y,v)] Q(dv; y,z)   \\
  & \ge - \int_{y}^{z} \mbox{$\frac{2K^{2}}{n}$}\, Q(dv; y,z) = - \mbox{$\frac{2K^{2}}{n}$}. 
\end{align*} 
Employing this lower bound in \eqref{e5.6} gives 
\begin{displaymath}
 \int_{\overline{\cal R}} (\wdh{BU_n}(y,z) + \wdh{c_1}(y,z))\, \nu_{1,t_j}(dy\times dz)  \ge  \int_{\overline{\cal R}} - \mbox{$\frac{2K^{2}}{n}$} \, \nu_{1,t_j}(dy\times dz)  = - \mbox{$\frac{2K^{2}}{n}$}\, \nu_{1,t_j}(\ovl{\cal R}).
\end{displaymath} 
The bound on the asymptotic limit of $\nu_{1,t_j}(\ovl{\cal R})$ as $j\to \infty$ in \remref{masses-observation} implies that
\begin{displaymath}
\liminf_{j\to\infty} \int_{\overline{\cal R}} (\wdh{BU_n}(y,z) + \wdh{c_1}(y,z))\, \nu_{1,t_j}(dy\times dz)  \geq - \mbox{$\frac{2K^2\, J(\tau,Y)}{n\, k_1}$}.
\end{displaymath}
Now letting $n\to \infty$ yields \eqref{Fatou-bdd}.
\end{proof}

For the remaining lemmas, assume $U_0$ is unbounded with $U_0(a)=-\infty$ and $U_0(b)=\infty$.  

\begin{lem}\label{lem-Rn1+Rn2}
Let $U_0$ be defined by \eqref{G0-def} and $U_n$ by \eqref{Gn-def}.  Then
\begin{equation}
\label{eq:hat-c1+BGn-est}
\begin{aligned}
(\hat{B U_{n}} + \hat c_{1})(y,z) & \geq \int_{y}^{z} R_{n,1}(y,v)  Q(dv; y,z) + \int_{y}^{z} R_{n,2}(y,v)  Q(dv; y,z)\\& = \hat R_{n,1}(y,z) +\hat R_{n,2}(y,z).
\end{aligned}
\end{equation}
in which 
\begin{align}
\label{eq:Rn1-defn} 
& R_{n,1}(y,v) := \frac{BU_{0}(y,v)+c_1(y,v)}{[1+\frac{1}{n} h( U_0(v))][1+\frac{1}{n} h(U_0(y))]},\\  \label{eq:Rn2-defn}
& R_{n,2}(y,v) := \frac{U_{0}( v) h(U_{0}(y)) - U_{0}(y) h(U_{0}(v))}{n [1+\frac{1}{n} h(U_0(v))][1+\frac{1}{n} h(U_0(y))]}. \rule{0pt}{20 pt}
\end{align}
\end{lem}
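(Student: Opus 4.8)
The plan is to derive \eqref{eq:hat-c1+BGn-est} from the stronger \emph{pointwise} estimate
\[
\bigl(BU_{n}+c_{1}\bigr)(y,v)\ \ge\ R_{n,1}(y,v)+R_{n,2}(y,v),\qquad (y,v)\in\ovl{\cal R},
\]
and then to integrate it against $Q(dv;y,z)$, using that $\wdh{BU_n}(y,z)=\int BU_n(y,v)\,Q(dv;y,z)$, $\wdh{c}_1(y,z)=\int c_1(y,v)\,Q(dv;y,z)$, and $\wdh R_{n,i}(y,z)=\int R_{n,i}(y,v)\,Q(dv;y,z)$ for $i=1,2$, all in the sense of the convention \eqref{hat-def}. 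Since $U_0$ is finite and continuous on $\cal E$ and $\ovl{\cal R}\subset{\cal E}^2$ (in the case under consideration $a,b$ are natural and hence excluded from $\cal E$), all the quantities below are well-defined.

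For $w\in\cal E$ abbreviate $a_w:=1+\frac{1}{n}h(U_0(w))$, so that $U_n(w)=U_0(w)/a_w$ by \eqref{Gn-def}. Since $h\ge 0$ --- off $[-1,1]$ one has $h(x)=|x|\ge 1$, while on $[-1,1]$, writing $u:=x^{2}\in[0,1]$, the value $h=-\frac18 u^{2}+\frac34 u+\frac38$ is increasing in $u$ with minimum $\frac38>0$ --- we have $a_w\ge 1$ for every $w$, and in particular $a_v a_y\ge 1$. First I would record the elementary identity, obtained by expanding the right-hand side,
\[
BU_{0}(y,v)+\tfrac1n\bigl(U_{0}(v)\,h(U_{0}(y))-U_{0}(y)\,h(U_{0}(v))\bigr)=U_{0}(v)\,a_{y}-U_{0}(y)\,a_{v}.
\]
Dividing by $a_{v}a_{y}$ and comparing with the definitions \eqref{eq:Rn1-defn}--\eqref{eq:Rn2-defn} then gives
\[
R_{n,1}(y,v)+R_{n,2}(y,v)=\frac{U_{0}(v)}{a_{v}}-\frac{U_{0}(y)}{a_{y}}+\frac{c_{1}(y,v)}{a_{v}a_{y}}=BU_{n}(y,v)+\frac{c_{1}(y,v)}{a_{v}a_{y}}.
\]

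From this, $\bigl(BU_{n}+c_{1}\bigr)(y,v)-\bigl(R_{n,1}+R_{n,2}\bigr)(y,v)=c_{1}(y,v)\bigl(1-\tfrac{1}{a_{v}a_{y}}\bigr)\ge 0$, because $a_{v}a_{y}\ge 1$ and $c_{1}\ge k_{1}>0$ by \cndref{cost-cnds}(b); this is the pointwise estimate. Integrating it against $Q(dv;y,z)$ --- whose support is contained in $(y,z]$ when $y<z$ and equals $\{y\}$ when $y=z$ --- yields \eqref{eq:hat-c1+BGn-est}. The lemma is essentially computational: once $BU_n$ has been rewritten in terms of $BU_0$, $R_{n,1}$, and $R_{n,2}$, the conclusion is immediate from $h\ge 0$ and the uniform positivity of $c_1$, so the only point deserving an explicit word is the nonnegativity of $h$ (in fact $h(x)\ge|x|$, as noted in \cite{helm:18}).
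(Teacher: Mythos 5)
Your proof is correct and follows essentially the same route as the paper: both arguments reduce to the algebraic identity $(BU_n+c_1)(y,v)=R_{n,1}(y,v)+R_{n,2}(y,v)+c_1(y,v)\bigl(1-\frac{1}{a_va_y}\bigr)$, discard the last term using $h\ge 0$ (hence $a_va_y\ge 1$) together with $c_1>0$, and then integrate against $Q(dv;y,z)$. Your reorganization of the computation as ``state the identity, divide by $a_va_y$'' is merely a cosmetic variant of the paper's add-and-subtract expansion.
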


\begin{proof}
Since $c_1$ is strictly positive, observe that 
\begin{align}  \label{Rn-defs} \nonumber  
  c_1(y,v) + BU_{n}(y,v) &= c_1(y,v) + \frac{U_{0}(v)}{1+\frac{1}{n} h(U_{0}(v))} - \frac{U_{0}(y)}{1+\frac{1}{n} h(U_{0}(y))}  \\ \nonumber 
  &=\; \frac{BU_{0}(y,v)+c_1(y,v)}{[1+\frac{1}{n} h(U_{0}(v))][1+\frac{1}{n} h(U_{0}(y))]}  + \frac{U_{0}( v) h(U_{0}(y)) -   U_{0}(y) h(U_{0}(v))}{n [1+\frac{1}{n} h(U_{0}(v))][1+\frac{1}{n} h(U_{0}(y))]}\\ \nonumber 
  &  \ \ \ +\;c_1(y,v) \left(1 - \frac{1}{[1+\frac{1}{n} h(U_{0}(v))][1+\frac{1}{n} h(U_{0}(y))]}\right) \\ \nonumber 
	&\geq \frac{BU_{0}(y,v)+c_1(y,v)}{[1+\frac{1}{n} h( U_0(v))][1+\frac{1}{n} h(U_0(y))]} + \frac{U_{0}( v) h(U_{0}(y)) -   U_{0}(y) h(U_{0}(v))}{n [1+\frac{1}{n} h(U_{0}(v))][1+\frac{1}{n} h(U_{0}(y))]}\\
  &=  R_{n,1}(y,v) + R_{n,2}(y,v).
\end{align}
Now integrating with respect to $Q(\,\cdot\,;y,z)$ yields \eqref{eq:hat-c1+BGn-est}.
\end{proof}

We now demonstrate that the double limit inferior of $R_{n,2}$ is nonnegative.

\begin{lem}\label{lem-Rn2} 
Let $R_{n,2}$ be defined by \eqref{eq:Rn2-defn}.  Then  
\begin{equation}  \label{R-n2-Fatou-bdd} 
\liminf_{n\rightarrow \infty} \liminf_{j\rightarrow \infty} \int_{\overline{\mathcal R}} \hat R_{n,2}(y,z)\,  \nu_{1,t_j}(dy\times dz) \geq 0.
\end{equation}
\end{lem}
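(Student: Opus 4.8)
The plan is to bypass any weak‑convergence input for this term and instead prove the sharper, non‑asymptotic bound that there is a finite constant $C_0$ — depending only on the model, through $U_0$ — with $R_{n,2}(y,v)\ge -C_0/n$ for every $(y,v)\in\ovl{\cal R}$ and every $n\in\NN$; then \eqref{R-n2-Fatou-bdd} follows by a one‑line integration‑and‑limit argument. The only facts about $h$ I will use are that it is even and continuous, $h(x)=|x|$ for $|x|\ge 1$, $h(x)\ge\tfrac38>0$ for all $x$, and $|x|\le h(x)\le|x|+\tfrac38$ for all $x$; the last inequality comes from checking that $p(u):=-\tfrac18u^4+\tfrac34u^2+\tfrac38-u$ has $p''\ge 0$ on $[0,1]$ with $p(1)=p'(1)=0$, so $0\le p\le\tfrac38$ there. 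Two consequences I will invoke repeatedly are $h(x)\ge x$ and $x+h(x)\ge 0$ for every real $x$. Also, since $h\ge\tfrac38$, both bracketed factors in \eqref{eq:Rn2-defn} exceed $1$, so the denominator of $R_{n,2}(y,v)$ strictly exceeds $n$.

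Because we are in the case $U_0(a)=-\infty$ and $U_0(b)=\infty$, \eqref{nat-bdry-limits} lets me pick $a_1,b_1\in\I$ with $a_1<b_1$, $U_0<-1$ on $(a,a_1)$, and $U_0>1$ on $(b_1,b)$. Set $K:=[a_1,b_1]\subset\I$ and $C_0':=\max_{x\in K}|U_0(x)|<\infty$. For a pair $(y,v)$ with $y<v$ in $\I$ I would argue by three exhaustive cases. (i) If $v>b_1$, then $U_0(v)>1$, so $h(U_0(v))=U_0(v)$ and the numerator of $R_{n,2}(y,v)$ equals $U_0(v)\bigl(h(U_0(y))-U_0(y)\bigr)\ge 0$, hence $R_{n,2}(y,v)\ge 0$. (ii) If $v\le b_1$ and $y<a_1$, then $U_0(y)<-1$, so $h(U_0(y))=-U_0(y)$ and the numerator equals $|U_0(y)|\bigl(U_0(v)+h(U_0(v))\bigr)\ge 0$, hence $R_{n,2}(y,v)\ge 0$. (iii) If $v\le b_1$ and $y\ge a_1$, then $a_1\le y<v\le b_1$, so $y,v\in K$; then $|U_0(v)|,|U_0(y)|\le C_0'$ and $h(U_0(\cdot))\le C_0'+\tfrac38$, so the numerator has absolute value at most $2C_0'(C_0'+\tfrac38)$, and since the denominator exceeds $n$, $R_{n,2}(y,v)\ge -C_0/n$ with $C_0:=2C_0'(C_0'+\tfrac38)$. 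These cases cover all $y<v$, and (i)–(ii) give a bound better than $-C_0/n$; including the diagonal (where $R_{n,2}=0$), we get $R_{n,2}\ge -C_0/n$ on $\ovl{\cal R}$. Averaging against the probability measure $Q(\cdot\,;y,z)$ then gives $\hat R_{n,2}(y,z)\ge -C_0/n$ for every $(y,z)\in\ovl{\cal R}$.

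To conclude, integrate: $\int_{\ovl{\cal R}}\hat R_{n,2}(y,z)\,\nu_{1,t_j}(dy\times dz)\ge -\tfrac{C_0}{n}\,\nu_{1,t_j}(\ovl{\cal R})$. By \remref{masses-observation}, $\limsup_{j\to\infty}\nu_{1,t_j}(\ovl{\cal R})\le J(\tau,Z)/k_1<\infty$, so $\liminf_{j\to\infty}\int_{\ovl{\cal R}}\hat R_{n,2}\,\nu_{1,t_j}(dy\times dz)\ge -C_0J(\tau,Z)/(nk_1)$, and letting $n\to\infty$ yields \eqref{R-n2-Fatou-bdd}.

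The substantive step is the case analysis of the second paragraph: the numerator $U_0(v)h(U_0(y))-U_0(y)h(U_0(v))$ can a priori be large and negative (e.g. when $U_0(y)$ is large and positive while $U_0(v)$ has the opposite sign), and the point is that the monotone divergence of $U_0$ at the natural boundaries recorded in \eqref{nat-bdry-limits} forces any such sign mismatch to occur with both arguments trapped in the fixed compact set $K$, where the numerator is bounded; everywhere else the numerator is actually nonnegative. The authors instead organize the bound through the finer partition of $\ovl{\cal R}$ in \figref{partition}; the argument above is an equivalent, self‑contained route that needs no weak‑convergence analysis for the $R_{n,2}$ term (that ingredient being reserved for $\hat R_{n,1}$).
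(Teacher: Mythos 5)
Your proof is correct and follows essentially the same route as the paper's: the paper likewise uses $U_0(a)=-\infty$ and $U_0(b)=\infty$ to find regions $F_1$ (small $y$) and $F_2$ (large $v$) where $h(U_0(\cdot))=|U_0(\cdot)|$ forces the numerator of $R_{n,2}$ to be nonnegative, bounds $|R_{n,2}|\le K/n$ on the remaining compact region $F_3$, and then concludes via the asymptotic mass bound on $\nu_{1,t_j}(\ovl{\cal R})$ from \remref{masses-observation}. (Your closing guess that the paper uses the partition of \figref{partition} here is off --- that finer partition is reserved for $\wdh R_{n,1}$; for this lemma the paper's $F_1,F_2,F_3$ decomposition in \figref{F-regions} matches your three cases, with your choice to run the case analysis on the integration variable $(y,v)$ before averaging against $Q$ being only a cosmetic streamlining.)
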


\begin{proof}

Since $U_{0}(a) = -\infty$, there exists some $y_1$, with $y_1 > a$ such that $U_{0}(x) < -1$ for all $x < y_1$.  Recall $h(x) = |x|$ on $(-\infty,-1)$ and $h(x) \geq |x|$ for all $x$.  Thus it follows that for all $(y,v)$ with $y \leq y_1$
\begin{equation}
\label{eq1:Rn>=0}
R_{n,2}(y,v) = \frac{|U_{0}(y)|(U_{0}(v) + h(U_{0}(v)))}{n [1+\frac{1}{n} h(U_{0}(v))][1+\frac{1}{n} |U_{0}(y)|]} \geq 0.
\end{equation}
Define $F_{1} : = \{(y,v)\in \lbar{\mathcal R}: a <  y \le y_{1}\}$.

Similarly, the condition $U_{0}(b) = \infty$ implies that there exists some $z_1$ with $ z_1 < b$ such that $U_{0}(v) \geq 1$ for $z_1 < v < b$. Thus for $(y,v) $ with $v > z_1$, 
\begin{equation}
\label{eq2:Rn>=0}
R_{n,2}(y,v) = \frac{U_{0}(v)(h(U_{0}(y)) - U_{0}(y))}{n [1+\frac{1}{n} h(U_{0}(v))][1+\frac{1}{n} h(U_{0}(y))]} \geq 0. 
\end{equation}
Set $F_{2} : = \{(y,v)\in \lbar{\mathcal R}: y_{1} <  y \le v,  z_{1} \le v  < b \}$ and also define the set $F_{3} : = \lbar{\mathcal R}\setminus (F_{1}\cup F_{2})$.  These sets are illustrated in \figref{F-regions}.

\begin{figure}[h]
\centering
\begin{tikzpicture} [scale=0.8]
\draw[->] (-3.5,0) -- (3.65,0)  node[right] {$y$};
\draw[->] (0,-3.5) -- (0,4.25) node[above] {$v$};
 \draw[scale=0.5,domain=-6.85:6.99,smooth,variable=\x,blue] plot ({\x},{\x});
\node at(3.42,3.42)[above right]{$(b,b)$};
\node at(-3.45,-3.45)[below left]{$(a,a)$};
\draw  [dashed,red] (-3.45,-3.5) -- (-3.45,3.5);
\node at (-3.45,0)[left] {$a$};
\draw  [dashed,red] (-3.45,3.5) -- (3.45,3.5);
\node at (0,3.5)[above left] {$b$};
\node at (0,2)[below right] {$z_{1}$};
\node at (-.88,1.4)[below]{$F_{3}$};
\node at (-2,0)[above right] {$y_{1}$};
\draw[dashed](-2.5,2.2) --(2,2.2);
\node at (0,2.2)[right] {$z_{1}$};
\draw[-](-2,-.05)--(-2,0.05); 
\path [fill=cyan] (-2,2) -- (2,2) -- (3.5,3.5)-- (-2,3.5) -- (-2,2);
\node at (0.4,2.7)[right]{$F_{2}$};
\path [fill=pink] (-3.45,-3.45)-- (-2,-2)--(-2,3.5)--(-3.45,3.5) -- (-3.45,-3.45);
\node at (-3,0.1)[right]{$F_{1}$};
\draw[-](-.05,2)--(0.05,2); 
\end{tikzpicture}
\caption{The regions $F_1$, $F_2$ and $F_3$} \label{F-regions}
\end{figure}
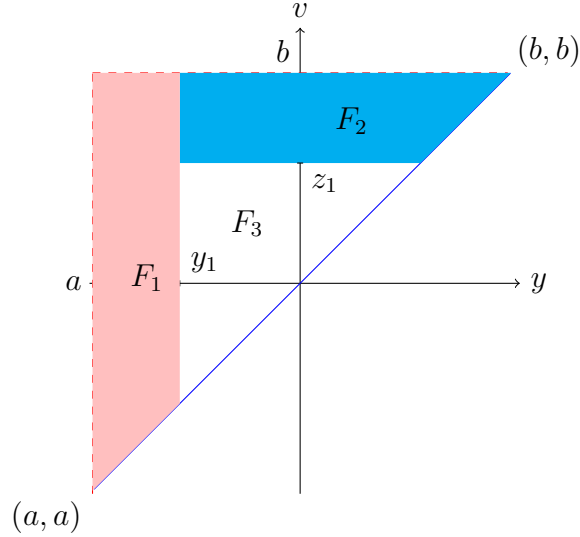

For $(y,z) \in F_{1} $, \eqref{eq1:Rn>=0} implies that 
\begin{align}\label{e1:Rn hat lower bd}
\hat R_{n,2}(y,z) & = \int_{y}^{z} R_{n,2}(y,v) Q(dv;y,z)  
  \ge 0.
\end{align}  
Establishing the result for regions $F_2$ and $F_3$ uses a common argument.  Considering the region $F_2$, the nonnegativity from \eqref{eq2:Rn>=0} implies that for $(y,z)\in F_{2}$,  
\begin{align*} 
 \hat R_{n,2}(y,z) & = \int_{y}^{z_{1}} R_{n,2}(y,v) Q(dv;y,z) +  \int_{z_{1}}^{z} R_{n,2}(y,v) Q(dv;y,z)       \\
  & \ge  \int_{y}^{z_{1}} R_{n,2}(y,v) Q(dv;y,z). 
\end{align*} 
For $(y,z)\in F_3$, $\hat R_{n,2}(y,z)  = \int_{y}^{z} R_{n,2}(y,v) Q(dv;y,z)$.  In each of these integrals, the upper limit of integration is bounded by $z_1$ so for each $(y,z)\in F_2\cup F_3$, we are only considering integrands $R_{n,2}$ on the closure of $F_3$.

Since the function $ U_{0}( v) h(U_{0}(y)) - U_{0}(y) h(U_{0}(v))$ is continuous, it is uniformly bounded on $\ovl{F}_3$. It follows that there exists some constant $K >0$ such that $|U_{0}(v) h(U_{0}(y)) -  U_{0}(y) h(U_{0}(v))| \le K$ and hence $|R_{n,2}(y,v) | \le \frac{K}{n}$. This, in turn, implies  that 
\begin{align}\label{e2-e3:Rn hat lower bd}
\hat R_{n,2}(y,z)   \ge  \int_{y}^{z\wedge z_{1}} R_{n,2}(y,v) Q(dv;y,z) \ge  - \frac{K}{n}\int_{y}^{z\wedge z_{1}} Q(dv;y,z) \ge - \frac{K}{n}. 
\end{align}

The inequalities \eqref{e1:Rn hat lower bd} and \eqref{e2-e3:Rn hat lower bd} imply that $\hat R_{n,2}(y,z) \ge - \frac{K}{n}$ for all $(y,z) \in \lbar{\mathcal R}$ and hence the asymptotic bound on the masses $\nu_{1,t_j}(\ovl{\cal R})$ in \remref{masses-observation} implies 
$$\liminf_{n\rightarrow \infty} \liminf_{j\rightarrow \infty} \int_{\overline{\mathcal R}} \hat R_{n,2}(y,z)\, \nu_{1,t_j}(dy\times dz) \geq 0.$$ 
\end{proof}

%
%

Turning to $R_{n,1}$, the proof of nonnegativity of the double limit inferior is more challenging.  

\begin{lem}\label{lem-Rn1} 
Let $R_{n,1}$ be given by \eqref{eq:Rn1-defn} and define $\wdh{R}_{n,1}$ by \eqref{eq:hat-c1+BGn-est}.  Then  
\begin{equation}  \label{R-n1-Fatou>=0} 
\liminf_{n\rightarrow \infty} \liminf_{j\rightarrow \infty} \int_{\lbar{\mathcal R}} \hat R_{n,1}(y,z)\, \nu_{1,t_j}(dy\times dz) \geq 0.
\end{equation}
\end{lem}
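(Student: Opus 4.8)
The plan is to follow the skeleton of the proof of \lemref{lem-Rn2} --- partition $\ovl{\cal R}$ into the pieces of \figref{partition}, bound $\wdh R_{n,1}$ below on each piece, and add up --- but now the pointwise control of $R_{n,1}$ is considerably more delicate. The one algebraic fact driving everything is the identity, valid on $\cal R$,
\[
R_{n,1}(y,v) \;=\; \frac{\bigl(F_0(y,v)-H_0^*\bigr)\,B\zeta(y,v)}{\bigl[1+\tfrac1n h(U_0(v))\bigr]\bigl[1+\tfrac1n h(U_0(y))\bigr]},
\]
which follows from $BU_0 = Bg_0 - H_0^*B\zeta$ and the definition of $F_0$, and which has two consequences used throughout: $R_{n,1}(y,v)\ge 0$ wherever $F_0(y,v)\ge H_0^*$, and $|R_{n,1}(y,v)|\le|BU_0(y,v)+c_1(y,v)|$ for every $n$, the denominator being $\ge 1$. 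I would group the pieces of \figref{partition} as (i) three neighborhoods of the boundary and diagonal on which $R_{n,1}\ge 0$ pointwise, (ii) a compact interior block, and (iii) the two ``tall'' regions $\Gamma_4,\Gamma_5$ in which $y$ is confined to a compact interval $[d_1,d_2]\subset\I$ but $z$ reaches up to $b$.

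For (i): exactly as in the proof of \thmref{F-optimizers}, using \lemref{g0-at-a} together with \cndref{extra-cnd} --- which in the present situation $U_0(a)=-\infty,\ U_0(b)=\infty$ forces $a$ and $b$ to be natural --- one can choose $d_1$ near $a$ and $d_2$ near $b$ so that $F_0(y,v)>H_0^*$ for all $v\in(y,b)$ whenever $y\le d_1$, and whenever $y\ge d_2$; and over the window $d_1\le y\le d_2$ one uses $c_1\ge k_1$, the uniform continuity of $BU_0$, and $BU_0(y,y)=0$ to get $\delta_0>0$ with $BU_0(y,v)+c_1(y,v)\ge k_1/2$ for $y\le v\le y+\delta_0$. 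On these three pieces $R_{n,1}(y,v)\ge 0$ for all $v\le z$, so $\wdh R_{n,1}(y,z)\ge 0$ for every $n$ and the pieces contribute $\ge 0$. For (ii): on the compact interior block (bounded away from $a$, $b$, and the diagonal) $U_0$ is bounded, so $R_{n,1}\to BU_0+c_1$ uniformly, whence $\wdh R_{n,1}\to\wdh{BU_0}+\wdh c_1$ uniformly; the limit is $\ge 0$ by \propref{qvi-ish}, so $\wdh R_{n,1}\ge-\varepsilon_n$ on this block with $\varepsilon_n\downarrow 0$, and by $\limsup_j\nu_{1,t_j}(\ovl{\cal R})\le J(\tau,Z)/k_1$ from \remref{masses-observation} the block contributes $\ge\liminf_n(-\varepsilon_n J(\tau,Z)/k_1)=0$.

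The crux is (iii). There $\wdh R_{n,1}(y,z)=\int_y^zR_{n,1}(y,v)\,Q(dv;y,z)$ samples the bad middle range of $v$ where $F_0(y,v)<H_0^*$, so $\wdh R_{n,1}$ is only bounded below by a constant, not nonnegative --- and since $\{\nu_{1,t_j}\}$ is not tight this is by itself useless. My plan is: pick (again by \lemref{g0-at-a}) a $\check z\in(d_2,b)$ with $F_0(y,v)>H_0^*$ for all $y\in[d_1,d_2]$, $v\in(\check z,b)$, put $C:=\sup\{|BU_0(y,v)+c_1(y,v)|:(y,v)\in[d_1,d_2]\times[d_1,\check z]\}<\infty$ (independent of $n$), and split
\[
\wdh R_{n,1}(y,z) = \int_y^{\check z}R_{n,1}(y,v)\,Q(dv;y,z)+\int_{\check z}^zR_{n,1}(y,v)\,Q(dv;y,z) \;\ge\; -\,C\,Q\bigl([y,\check z];y,z\bigr).
\]
This is exactly where \cndref{Q-cnds}(c) is needed. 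It makes $Q(\cdot;y,z)$ pile up near $b$ (and, after the $B\zeta$-reweighting, $\mathfrak P(\cdot;y,z)$ concentrates there, cf.\ \lemref{lem-non-compact}), and, more to the point, it makes far-reaching orders asymptotically rare: an order with target $z$ above a high level $\wdt z$ deposits the inventory above an intermediate level $\wdt z_1\in(d_2,\wdt z)$ with probability at least the $\delta$ of \cndref{Q-cnds}(c), and from there the controlled process --- whose impulses only move it up --- needs at least $B\zeta(d_2,\wdt z_1)$ units of expected time to come back down to level $d_2$, hence to the $y$-window of the tall regions. A renewal-type argument then gives $\limsup_j\nu_{1,t_j}(\{d_1\le y\le d_2,\ z\ge\wdt z\})\le(\delta\,B\zeta(d_2,\wdt z_1))^{-1}$, which tends to $0$ as $\wdt z\uparrow b$ since $b$ is natural. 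Feeding this mass bound into the displayed inequality, the tall regions contribute at least $-C\,(\delta\,B\zeta(d_2,\wdt z_1))^{-1}$ to $\liminf_j\int\wdh R_{n,1}\,d\nu_{1,t_j}$, uniformly in $n$; and since the estimates in (i) and (ii) do not depend on how high $\wdt z$ is taken, letting $\wdt z\uparrow b$ drives even this term to $0$. Adding the contributions of all pieces gives \eqref{R-n1-Fatou>=0}.

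The main obstacle is thus the tall-region estimate: turning the intuition ``orders reaching near the non-attracting boundary $b$ have vanishing frequency'' into a quantitative bound on $\nu_{1,t_j}$, so that the bounded-but-not-small negative values of $\wdh R_{n,1}$ there are washed out. This is the step genuinely using \cndref{Q-cnds}(c); the contributions of the remaining pieces are adaptations of the corresponding estimates in \cite{helm:18} with the relevant $Q$-averaging inserted.
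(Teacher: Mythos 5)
Your overall architecture (partition $\ovl{\cal R}$, get pointwise or averaged lower bounds on each piece, use the mass bound from \remref{masses-observation}) matches the paper's, and your treatment of the boundary/diagonal pieces is essentially the paper's treatment of $\Gamma_1$, $\Gamma_2$, $\Gamma_3$. You diverge in the two hard places. On the compact block your argument is a genuine simplification that appears correct: since $U_0$ and $c_1$ are bounded there, $R_{n,1}\ge BU_0+c_1-\varepsilon_n$ with $\varepsilon_n\downarrow 0$ uniformly, hence $\wdh R_{n,1}\ge \wdh{BU_0}+\wdh c_1-\varepsilon_n\ge-\varepsilon_n$ by \propref{qvi-ish}, and the uniform bound $\limsup_j\nu_{1,t_j}(\ovl{\cal R})\le J(\tau,Z)/k_1$ finishes it. The paper instead runs a double subsequence extraction, weak limits $\ovl\nu_{1,n}$ of the restricted measures, and Serfozo's lemma to reach the same conclusion on $\Gamma_5$; your route avoids all of that.

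The gap is in the tall regions. You bound $\wdh R_{n,1}\ge -C$ there and then must show the ordering measures put asymptotically negligible mass on $\{d_1\le y\le d_2,\ z\ge\wdt z\}$ as $\wdt z\uparrow b$. The claimed estimate $\limsup_j\nu_{1,t_j}(\cdot)\le(\delta\,B\zeta(d_2,\wdt z_1))^{-1}$ is plausible but is asserted, not proved, and proving it is not routine. You need (i) that the descent time of the \emph{controlled} process from above $\wdt z_1$ to $d_2$ stochastically dominates (or at least has conditional mean at least) $B\zeta(d_2,\wdt z_1)$ --- this requires an argument (e.g.\ the local-martingale identity $A\zeta=-1$ plus nonnegativity of the jump increments of $\zeta(X)$, with localization), since intermediate orders alter the path; and (ii) a Wald/optional-stopping step converting ``conditional mean inter-arrival time $\ge\Delta$'' into ``$\limsup_t\EE[N_t]/t\le 1/\Delta$''. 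Step (ii) fails naively because of the overshoot term $\EE[D_{N_t}]$ (the inspection-paradox interval straddling $t$ is not controlled by a conditional mean alone); one must first truncate the descent times at a level $c$ chosen from the \emph{fixed} law of the uncontrolled descent time, which in turn presupposes (i) in stochastic-domination form. None of this is carried out, and it is the step that does the real work. Note how the paper sidesteps the issue entirely: on $\Gamma_4$ it never estimates $\nu_{1,t_j}$ at all, but instead observes that $R_{n,1}(y,v)\ge f_n(v)$ grows like $n$ for $v$ near $b$, so the mass $\ge\delta/2$ that \cndref{Q-cnds}(c) guarantees lands above $z_1$ contributes at least $K_3$ to $\wdh R_{n,1}$ for $n\ge N$, cancelling the worst-case $-K_3$ from the middle range and giving $\wdh R_{n,1}\ge 0$ outright. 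If you want to keep your probabilistic route you must supply (i) and (ii); otherwise the paper's analytic argument is the shorter path. A minor additional imprecision: your bound depends on the intermediate level $\wdt z_1$, not on $\wdt z$, so the final limit must be taken as $\wdt z_1\uparrow b$ with $\wdt z=\wdt z(\wdt z_1)$ chosen from \cndref{Q-cnds}(c), using that the $\delta$ there is uniform in $\wdt z_1$.
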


\begin{proof}

The argument begins with a similar line of reasoning as for \lemref{lem-Rn2} by establishing lower bounds on $R_{n,1}$ in various regions of $\ovl{\cal R}$.  \figref{partition} indicates the partition of $\ovl{\cal R}$ used in the proof.  The sets $\Gamma_1$ and $\Gamma_2$ are defined slightly differently depending on whether $a$ is attainable or natural and whether $b$ is entrance or natural.  When $a$ is attainable and $b$ is entrance, the partition can be slightly simplified.  In order that the proof apply to all types of boundary points, however, we adopt the same partition for every type of boundary.  

\begin{figure}[h]
\begin{center}
\begin{tikzpicture} [scale=0.8]
\draw[->] (-3.5,0) -- (3.65,0)  node[right] {$y$};
\draw[->] (0,-3.5) -- (0,4.25) node[above] {$v$};
 \draw[scale=0.5,domain=-6.85:6.99,smooth,variable=\x,blue] plot ({\x},{\x});
\node at(3.42,3.42)[above right]{$(b,b)$};
\node at(-3.45,-3.45)[below left]{$(a,a)$};
\draw  [dashed,red] (-3.45,-3.5) -- (-3.45,3.5);
\node at (-3.45,0)[left] {$a$};
\draw  [dashed,red] (-3.45,3.5) -- (3.45,3.5);
\node at (0,3.5)[above left] {$b$};
\path [fill=yellow] (-3.45,-2) rectangle (-2.5,3.5);
\node at (-2.89,1)[above]{$\Gamma_{3}$};
\node at (-2.5,0)[right] {$y_{0}$};
\draw[-](-2.5,-.05)--(-2.5,0.05); 
\node at (0,-2)[right] {$z_{0}$};
\draw[-](-.05,-2)--(0.05, -2); 
\path [fill=lightgray] (-2.5,2.8) rectangle (2,3.5);
\node at (-1.18,2.4)[above]{$\wdt\Gamma_{4}$};
\node at (-.88,1.4)[below]{$\Gamma_{5}$};
\node at (-.58,3.1)[right]{$\Gamma_{4}$};
\node at (2,0)[below] {$y_{1}$};
\draw[dashed](-2.5,2.2) --(2,2.2);
\draw[-](2.0,-.05)--(2.0,0.05); 
\node at (0,2.2)[right] {$\wdt{z}_{1}$};
\draw[-](-.05,2.2)--(0.05, 2.2); 
\node at (0,2.8)[right] {$z_{2}$};
\draw[-](-.05,2.8)--(0.05, 2.8); 
\path [fill=cyan] (2,2)-- (3.5,3.5)-- (2,3.5)--(2,2);
\node at (2.15,3)[right]{$\Gamma_{2}$};
\path [fill=pink] (-3.45,-3.45)-- (-2,-2)--(-3.45,-2)--(-3.45,-3.45);
\node at (-3.35,-2.43)[right]{$\Gamma_{1}$};
\end{tikzpicture}
\end{center}
\caption{Partition of $\ovl{\cal R}$} \label{partition}
\end{figure}

\begin{itemize}
\item[$\bullet$] When $a$ is attainable, $\zeta$ is bounded below on $\ovl{\E}$.  As a result, 
\begin{equation} \label{a-attainable-est}
\frac{BU_0(y,v) + c_1(y,v)}{B\zeta(y,v)} = \frac{Bg_0(y,v) + c_1(y,v)}{B\zeta(y,v)} - H_0^* \geq \frac{k_1}{B\zeta(y,v)} - H_0^*.
\end{equation}
Let $z_0$ satisfy $B\zeta(a,z_0) = \frac{k_1}{H_0^*}$ and define the set $\Gamma_1 = \{(y,z)\in \ovl{\cal R}: a \leq y \leq z < z_0\}$.  Then the monotonicity of $\zeta$ yields $0 < B\zeta(y,v) \leq \frac{k_1}{H_0^*}$ for $(y,v)\in \Gamma_1$ with $y < v$, and hence $BU_0(y,v) + c_1(y,v) \geq 0$, implying that $R_{n,1} \geq 0$ as well.  Continuity of $R_{n,1}$ up to the diagonal of $\Gamma_1$ then establishes $R_{n,1} \geq 0$ on $\Gamma_1$.

\item[$\bullet$] When $a$ is a natural boundary, \eqref{double-g0-zeta-diff-at-a} of \lemref{g0-at-a} with \cndref{extra-cnd}(a) implies that there is some $z_{0} > a$ so that $\frac{Bg_{0}(y,v)}{B\zeta(y,v)} \geq H_0(y_1,z_1) \ge H_{0}^{*} $ for all $ y \le v \le z_{0}.$  Define the region 
$$\Gamma_1 := \{(y,z)\in \ovl{\cal R}: a < y \leq z < z_0\}.$$  
As a result of the lower bound on the ratio, for $(y,v) \in \Gamma_1$, 
\begin{equation} \label{Rn1-numerator} 
0 \leq Bg_0(y,v) - H_0^* B\zeta(y,v) = BU_0(y,v) < BU_0(y,v) + c_1(y,v).
\end{equation}
Therefore from its definition, $R_{n,1} > 0$ on $\Gamma_1$.

\item[$\bullet$] When $b$ is an entrance boundary, $\zeta$ is bounded above on $\ovl{\E}$.  Set $y_1$ so that $B\zeta(y_1,b) = \frac{k_1}{H_0^*}$.  Define $\Gamma_2 = \{(y,z)\in \ovl{\cal R}: y_1 < y \leq z \leq b\}$.    Using the estimate in \eqref{a-attainable-est} and arguing similarly as for the boundary $a$, it follows that $R_{n,1} \geq 0$ on $\Gamma_2$.

\item[$\bullet$] When $b$ is a natural boundary, \eqref{double-g0-zeta-diff-at-a} of \lemref{g0-at-a} with \cndref{extra-cnd}(b) implies that there is some $y_{1} < b$ so that $\frac{Bg_{0}(y,v)}{B\zeta(y,v)} \ge H_{0}^{*} $ for all $y_{1}\le y \le v$.  Define the region 
$$\Gamma_2:=\{y,z)\in \ovl{\cal R}: y_1 < y \leq z < b\}.$$  
Then for $(y,v) \in \Gamma_2$, the relation \eqref{Rn1-numerator} again holds, implying that $R_{n,1}(y,v) > 0$.

\item[$\bullet$] Let $z_0$ be as in the definition of $\Gamma_1$.  Define $K_1 = \inf\{U_0(v): z_0 \leq v < b\}$ and observe that $K_1 > -\infty$.  Since $U_{0}(a) = -\infty$, continuity of $U_0$ at $a$ implies that there is some $y_0$ with $a < y_{0}  < y_{1}\wedge z_{0}$ such that $U_{0}(y) \le K_{1}$ for all $y < y_{0}$.  Define 
$$\Gamma_{3}:= \{(y,v)\in  \lbar{\mathcal R}: a < y< y_{0}, v \ge z_{0}\}.$$  
Then for all $(y,v) \in \Gamma_{3}$, 
\begin{align} \label{bound-on-E3} \nonumber
R_{n,1}(y,v) &= \frac{BU_{0}(y,v)+c_1(y,v)}{[1+\frac{1}{n} h(U_{0}(v))][1+\frac{1}{n} h(U_{0}(y))]} \\ 
&\ge \frac{K_{1}- K_{1} + c_1(y,v)}{[1+\frac{1}{n} h(U_{0}(v))][1+\frac{1}{n} h(U_{0}(y))]} > 0.
\end{align} 

\item[$\bullet$]  Following a similar argument, let $y_0$ and $y_1$ be as chosen above.  Define $K_2 = \sup\{|U_0(y)|: y_0 \leq y \leq y_1\}$.  Since $U_0(b) = \infty$, continuity implies  existence of some $\wdt{z}_1 < b$ for which $U_0(v) \geq K_2$ for all $v \geq \wdt{z}_1$.  Define the region
$$\wdt\Gamma_4 = \{(y,z)\in \ovl{\cal R}: y_0 \leq y \leq y_1, z \geq \wdt{z}_1\}.$$
For all $(y,v) \in \wdt\Gamma_4$, the numerator of $R_{n,1}$ has the bound $BU_0(y,v) + c_0(y,v) \geq K_2 - K_2 + c_0(y,v) > 0$ implying that $R_{n,1} > 0$ on $\wdt\Gamma_4$.
\end{itemize}

Turning briefly to $\wdh{R}_{n,1}(y,v) = \int_y^z R_{n,1}(y,v)\, Q(dv;y,z)$, notice that this is a line integral over the vertical segment $(y,y)$ to $(y,z)$.  For $\Gamma_1$, $\Gamma_2$ and $\Gamma_3$, these segments are entirely contained in the regions so it immediately follows that $\wdh{R}_{n,1} \geq 0$ on these regions.  For $(y,z)\in \wdt\Gamma_4$, the segment from $(y,y)$ to $(y,z)$ is not contained in $\wdt\Gamma_4$ and it is not necessary that $R_{n,1} \geq 0$ on the segment so a more careful analysis is required.

 Let $y_{0}, y_{1}, $ and $\wdt{z}_{1}$ be the values used to define the subsets $\Gamma_2$, $\Gamma_3$ and $\wdt\Gamma_4$.  Recall $K_{2} = \sup_{y_{0}\le y \le y_{1}}|U_{0}(y)|$.  Now set  
$$K_{3}: =  \sup_{y_{0}\le y \le y_{1}, y\le v \le \wdt{z}_{1}} |BU_{0}(y,v)+c_1(y,v)|. $$ 
Note that $| R_{n,1}(y,v) | \le K_{3} $ for all $n\in \mathbb N$ and $(y,v) \in \ovl{\mathcal R}$ with $y_{0}\le y \le y_{1}$ and $y\le v \le \wdt{z}_{1}$. In addition, observe that for any $(y,v) \in \wdt\Gamma_4$,  
\begin{align*} 
 R_{n,1}(y,v) & =  \frac{BU_{0}(y,v)+c_1(y,v)}{[1+\frac{1}{n} h(U_{0}(v))][1+\frac{1}{n} h(U_{0}(y))]}     \\
  & \ge \frac{ U_{0}(v) -  \sup_{y_{0}\le y \le y_{1}}|U_{0}(y)|  }{[1+\frac{1}{n} h(U_{0}(v))][1+\frac{1}{n} \cdot1\vee \sup_{y\in [y_{0}, y_{1}]} |U_{0}(y)|]} \\
  & = \frac{ U_{0}(v) -K_{2}}{[1+\frac{1}{n} h(U_{0}(v))][1+\frac{1}{n} \cdot1\vee K_{2}] }
  \\& =: f_{n}(v).
\end{align*} 
By the choice of $\wdt{z}_1$ and the definition of $K_2$, it is easy to see that for each $v \ge \wdt{z}_{1}$ fixed, $f_{n}(v)$ is increasing in $n$. Moreover, since $\lim_{v\to b} U_{0}(v) = \infty$, we have $\lim_{v\to b} f_{n}(v) = \frac{n}{1+\frac{1\vee K_{2}}{n}\rule{0pt}{10pt}}$ for each $n$. 

Using the interval $[y_0,y_1]$, let $\delta>0$ be given by \cndref{Q-cnds}(c).  We first fix an $N >(\frac{ 4K_{3}}{\delta}+1)\vee K_2$.  Since $\lim_{v\to b} f_{N}(v) = \frac{N}{1+\frac{1\vee K_{2}}{N}\rule{0pt}{10pt}}$, we can find a $z_1$ with $\wdt{z}_1 < z_1 < b$ so that $f_{N}(v) \ge \frac{N}{2} \ge \frac{2K_{3}}{\delta}$ for all $v \ge z_1.$ Consequently, for all $n \ge N$ and $(y,v)$ with $y_{0} \le y \le y_{1}$ and $v \geq   z_1$, we have 
\begin{displaymath}
R_{n,1}(y,v) \ge f_{n}(v) \ge f_{N}(v) \ge \frac{2K_{3}}{\delta}.
\end{displaymath} 
By the \cndref{Q-cnds}(c), there exists a $z_{2} > z_{1}$ so that 
$$\inf_{y\in [y_{0}, y_{1}]} Q((z_{1}, b);y,z) \ge \frac{\delta}{2},\quad \text{ for all }z> z_{2}.$$ 
Define $\Gamma_{4}: =\{(y,v) \in \ovl{\mathcal R}:    y_{0}\le y \le y_{1}  \text{ and }v > z_2\}$.  Recall, supp\,$Q(\,\cdot\,;y,z)\subset (y,z]$ so $Q((z_1,b);y,z) = Q((z_1,z];y,z)$.  Then for all  $n \ge N$ and all $(y,z) \in \Gamma_4$, 
\begin{align*} 
  \hat R_{n,1}(y,z) & 
  = \int_{(y,\wdt{z}_{1}]} R_{n,1}(y,v)\, Q(dv;y,z) +  \int_{(\wdt{z}_{1},z_{1}]}  R_{n,1}(y,v)\, Q(dv;y,z) \\
	& \qquad +  \int_{(z_{1},z] }  R_{n,1}(y,v)\, Q(dv;y,z)  \\ 
& \ge \int_{(y,\wdt{z}_{1}]}  (-K_{3})\, Q(dv;y,z) +   \int_{(\wdt{z}_{1},z_{1}]} 0\, Q(dv;y,z) +    \int_{(z_{1},z] } \frac{2K_{3}}{\delta}\, Q(dv;y,z)\\ 
 & \ge -K_{3} +  \frac{2K_{3}}{\delta}\cdot \frac{\delta}{2} =0.
\end{align*} 
Summarizing, on the set $\Gamma = \cup_{i=1}^4 \Gamma_i$, the function $\wdh{R}_{n,1} \geq 0$ so 
\begin{equation} \label{double-sets}
\liminf_{n\to\infty} \liminf_{j\to\infty} \int_{\Gamma} \wdh{R}_{n,1}(y,z)\, \nu_{1,t_j}(dy\times dz) \geq 0.
\end{equation}

Now define the set $\Gamma_5 = \ovl{\mathcal R} \backslash (\cup_{i=1}^4 \Gamma_{i})$; this compact set is displayed by the closure of the white region in \figref{partition}.  We need to show that  
\begin{align}\label{e:prop59E5-claim} 
\liminf_{n\to\infty}\liminf_{j\to\infty} & \int_{\Gamma_{5}} \wdh R_{n,1}(y,z) \nu_{1,t_{j}}(dy\times dz)  \ge 0.
\end{align} 

For each $n$, let $\{t_{j_k}\}\subset \{t_j\}$ be a subsequence such that 
$$\lim_{k\to\infty} \int_{\Gamma_{5}} \wdh R_{n,1}(y,z) \nu_{1,t_{j_k}}(dy\times dz) = \liminf_{j\to\infty} \int_{\Gamma_{5}} \wdh R_{n,1}(y,z) \nu_{1,t_{j}}(dy\times dz);$$
the dependence of the subsequence on $n$ is notationally suppressed.  Now restrict each $\nu_{1,t_{j_k}}$ to $\Gamma_5$ and observe that, trivially, the collection $\{\nu_{1,t_{j_k}}\}$ is tight and furthermore, $\nu_{1,t_{j_k}}(\Gamma_5) \leq \nu_{1,t_{j_k}}(\ovl{\cal R})$ for each $k$.  It therefore follows from \remref{masses-observation} that the masses $\{\nu_{1,t_{j_k}}(\Gamma_5)\}$ are uniformly bounded.  The properties of tightness and uniform boundedness imply that there exists some further subsequence $\{t_{j_{k_\ell}}\}$ and a measure $\ovl\nu_{1,n}$ on $\Gamma_5$ such that $\nu_{1,t_{j_{k_\ell}}} \Rightarrow \ovl\nu_{1,n}$ (see Theorem 8.6.2 of \cite{boga:07}); the dependence of the limiting measure on $n$ is now explicitly represented.  Note that since the measures are restricted to $\Gamma_5$, the weak convergence $\nu_{1,t_{j_{k_\ell}}} \Rightarrow \ovl\nu_{1,n}$ implies that 
$$\lim_{\ell \to\infty} \nu_{1,t_{j_{k_\ell}}}(\Gamma_5) = \lim_{\ell \to\infty} \int_{\Gamma_5} 1\, d\nu_{1,t_{j_{k_\ell}}} = \int_{\Gamma_5} 1\, d\ovl\nu_{1,n}(\Gamma_5) = \ovl\nu_{1,n}(\Gamma_5).$$

For each $n$, the function $\wdh R_{n,1}(y,z)$ can be shown to be lower semicontinuous by a similar argument as that for the proof of Proposition \ref{c1hat-lsc}.  In addition, $\wdh{R}_{n,1}$ inherits boundedness from the function $R_{n,1}$, which is continuous and uniformly bounded on the compact region $\Gamma_{5}$.  This bound is also uniform for all $n$ due to the definition of $R_{n,1}$.  Then applying Corollary~8.2.5 of \cite{boga:07},   
$$\liminf_{\ell\to\infty} \int_{\Gamma_5} \wdh{R}_{n,1}(y,z)\, \nu_{1,t_{j_{k_\ell}}}(dy\times dz) \geq \int_{\Gamma_5} \wdh{R}_{n,1}(y,z)\, \ovl\nu_{1,n}(dy\times dz).$$
The challenge in analyzing the right-hand side is the dependence on $n$ of both $\wdh{R}_{n,1}$ and $\ovl\nu_{1,n}$.  We will apply Lemma~2.1 in \cite{serf:82}, which concerns nonnegative functions.  Since $\wdh R_{n,1}$ is uniformly bounded on $\Gamma_{5}$ and over $n\in \NN$, there is a positive constant $R$ so that $\wdh R_{n,1}(y,z) + R \ge 0$ for all $(y,z) \in \Gamma_{5}$ and $n \in \NN$.  

Now let $\{n_m\}\subset \NN$ be a subsequence for which   
\begin{displaymath}
\lim_{m\to\infty} \int_{\Gamma_{5}} \wdh R_{n_m,1}(y,z)\,  \ovl \nu_{1,n_m} (dy\times dz) = \liminf_{n\to\infty}\int_{\Gamma_{5}} \wdh R_{n,1}(y,z)\, \ovl \nu_{1,n} (dy\times dz).
\end{displaymath}
The collection $\{\ovl \nu_{1,n_m}\}$, as measures on the compact set $\Gamma_{5}$, is tight and $\ovl\nu_{1,n_m}(\Gamma_5)$ inherits the uniform bound of \remref{masses-observation}.  Theorem~8.6.2 of \cite{boga:07} implies the existence of a further subsequence $\{\ovl \nu_{1,n_{m_i}}\}$ and a  measure $\ovl\nu$ so that $\ovl\nu_{1,n_{m_i}} \Rightarrow \ovl \nu$. 

We now verify the hypothesis of Lemma~2.1 of \cite{serf:82}.  Observe that Fatou's Lemma implies that for each $(y,z)\in \Gamma_5$, 
\begin{align} \label{e:R-n1-hat-liminf}
 \nonumber \liminf_{n\to\infty} \wdh R_{n,1}(y,z) & =  \liminf_{n\to\infty}  \int_{y}^{z} R_{n,1}(y,v) Q(dv; y, z)\\
\nonumber& \ge   \int_{y}^{z}  \liminf_{n\to\infty} R_{n,1}(y,v) Q(dv; y, z)  \\ 
\nonumber & = \int_{y}^{z} (BU_{0} (y,v) + c_{1}(y,v)) Q(dv; y, z) \\
& = \wdh{BU_{0}} (y, z) + \wdh c_{1}(y,z) \ge 0,
\end{align} 
where the last inequality follows from Proposition \ref{qvi-ish}.  Now briefly simplify notation by setting $f: =\wdh{BU_{0}} + \wdh c_{1}$.  Note that $f$ is nonnegative and lower semicontinuous on $ {\Gamma_{5}}$ by \propref{c1hat-lsc}.  Moreover, \eqref{e:R-n1-hat-liminf} implies that 
$$\liminf_{i\to\infty}  \wdh R_{n_{m_i},1}(y,z) \ge \liminf_{n\to\infty} \wdh R_{n,1}(y,z)  \ge f(y,z).$$ 
Thus it follows that for any $t \in \R^{+}$, $\e > 0$, and all sufficiently large $i\in \NN$, we have $\{f +R > t + \e\} \subset \{ \wdh R_{n_{m_i},1} + R > t\}$.  Hence the weak convergence of $\ovl\nu_{1,n_{m_i}}$ to $\ovl\nu$ and this inclusion for $i$ sufficiently large yield 
\begin{align*} 
\ovl\nu\{f + R> t+\e\} & \le \liminf_{i\to\infty} \ovl \nu_{1,n_{m_i}}\{f + R > t+\e\}  \le \liminf_{i\to\infty} \ovl \nu_{1,n_{m_i}}\{\wdh R_{n_{m_i},1} + R> t\};
\end{align*}  
thus the conditions of Lemma~2.1 of \cite{serf:82} are satisfied.  Using that lemma and \propref{c1hat-lsc}, it follows that    
\begin{eqnarray*}
\liminf_{n\to\infty}\int_{\Gamma_{5}} (\wdh R_{n,1}(y,z) + R)\, \ovl\nu_{1,n} (dy\times dz) &=& \lim_{i\to\infty}\int_{\Gamma_{5}} (\wdh R_{n_{m_i},1}(y,z) + R)\, \ovl\nu_{1,n_{m_i}} (dy\times dz) \\ 
&\ge& \int_{\Gamma_{5}} (f(y, z) + R)\, \ovl\nu(dy\times dz).
\end{eqnarray*} 
Recalling that $f = \wdh{BU}_0 + \wdh{c}_1 \geq 0$ and that $\ovl\nu_{1,n_{m_i}} \Rightarrow \ovl\nu$ implies convergence of the masses $\ovl\nu_{1,n_{m_i}}(\Gamma_5)$ to $\ovl\nu(\Gamma_5)$, this gives  
$$\liminf_{n\to\infty}\int_{\Gamma_{5}} \wdh R_{n,1}(y,z)\,  \ovl\nu_{1,n}(dy\times dz) \ge \int_{\Gamma_{5}} f(y, z)\, \ovl\nu(dy\times dz) \ge0.$$
Therefore \eqref{e:prop59E5-claim} is established, which combined with \eqref{double-sets}, completes the proof.
\end{proof}

Pulling all results together, we obtain our main theorem.

\begin{thm} \label{G0-feasible}
Assume Conditions \ref{diff-cnd} - \ref{cost-cnds}, \ref{extra-cnd} and \ref{AG0-unif-int} hold.  Let $(\tau,Z) \in {\cal A}_0$ with $J(\tau,Z) < \infty$.  Then
$$J(\tau,Z) \geq H_0^* = H_0(\yzstar,\zzstar) = J(\tau^*,Z^*)$$
in which $(\tau^*,Z^*)$ is the ordering policy \eqref{sS-tau-def} using an optimizing pair $(\yzstar,\zzstar) \in {\cal R}$.
\end{thm}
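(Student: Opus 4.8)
The theorem is essentially the assembly of the two hard estimates \propref{AGn-c0-rel} and \propref{BGn-c1-rel} with the limiting adjoint relation \propref{bdd-adjoint}; the plan is to dispose of the easy equalities first and then carry out that assembly. By \thmref{F-optimizers} the pair $(\yzstar,\zzstar)\in{\cal R}$ exists with $H_0(\yzstar,\zzstar)=H_0^*$, and $H_0^*<\infty$ because \cndref{extra-cnd} supplies a pair at which $H_0$ lies below $c_0(a)\wedge c_0(b)$. By the MDG part \cndref{Q-cnds}(a,ii) the nominal $(\yzstar,\zzstar)$-ordering policy $(\tau^*,Z^*)$ of \eqref{sS-tau-def} has a valid mathematical model and, as in \cite{helm:18}, lies in ${\cal A}_0$ (when $a$ is reflecting the controlled process stays at or above $\yzstar>a$, so its local time at $a$ vanishes). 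The renewal-reward identity \eqref{eq-sS-cost} then gives $J(\tau^*,Z^*)=H_0(\yzstar,\zzstar)=H_0^*$, so it remains only to prove $J(\tau,Z)\ge H_0^*$ for every $(\tau,Z)\in{\cal A}_0$ with $J(\tau,Z)<\infty$.

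Fix such a policy with associated process $X$ and choose a sequence $t_j\to\infty$ along which the $\limsup$ in \eqref{lta-obj-fn} is attained, so that by \eqref{e:cost-mu-nu-integral} one has $J(\tau,Z)=\lim_j\big(\int c_0\,d\mu_{0,t_j}+\int\hat{c}_1\,d\nu_{1,t_j}\big)$. By \propref{mu0-tightness}, $\{\mu_{0,t_j}\}$ is tight; pass to a subsequence (still written $t_j$, still realizing $J(\tau,Z)$) with $\mu_{0,t_j}\Rightarrow\mu_0$. Since $c_0\ge0$ is continuous on $\cal I$ with limits at the boundaries, weak convergence together with truncation (the argument in the proof of \propref{J-finite}) yields $\liminf_j\int c_0\,d\mu_{0,t_j}\ge\int_{\overline{\cal E}}c_0\,d\mu_0$, while \propref{J-finite} gives $\int_{\overline{\cal E}}c_0\,d\mu_0\le J(\tau,Z)<\infty$. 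Because the limit of the sum exists,
\begin{equation}\label{eq-plan-J-lb}
J(\tau,Z)\;\ge\;\liminf_j\int c_0\,d\mu_{0,t_j}+\liminf_j\int\hat{c}_1\,d\nu_{1,t_j}\;\ge\;\int_{\overline{\cal E}}c_0\,d\mu_0+\liminf_j\int_{\overline{\cal R}}\hat{c}_1\,d\nu_{1,t_j}.
\end{equation}

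Now fix $n$. By \lemref{lem:G0-approx}, $U_n\in{\cal D}$, so \propref{bdd-adjoint} applies to the sequence $\{t_j\}$ and gives $\int_{\overline{\cal E}}AU_n\,d\mu_0+\lim_j\int_{\overline{\cal R}}\hat{BU_n}\,d\nu_{1,t_j}=0$, where all these integrals are finite ($U_n$ is bounded, $\nu_{1,t_j}$ has finite mass) and the displayed limit exists. Adding the quantity $\lim_j\int\hat{BU_n}\,d\nu_{1,t_j}=-\int AU_n\,d\mu_0$ to both terms on the right of \eqref{eq-plan-J-lb} leaves that right-hand side unchanged, so
\begin{align*}
J(\tau,Z)&\ge\int_{\overline{\cal E}}c_0\,d\mu_0+\liminf_j\int_{\overline{\cal R}}\hat{c}_1\,d\nu_{1,t_j}\\
&=\int_{\overline{\cal E}}\big(AU_n+c_0\big)\,d\mu_0+\liminf_j\int_{\overline{\cal R}}\big(\hat{BU_n}+\hat{c}_1\big)\,d\nu_{1,t_j}.
\end{align*}
This holds for every $n$; taking $\liminf_{n\to\infty}$ on the right, using superadditivity of $\liminf$, and invoking \propref{AGn-c0-rel} for the first term and \propref{BGn-c1-rel} for the double lower limit of the second, we get $J(\tau,Z)\ge H_0^*+0=H_0^*$. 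Combined with $J(\tau^*,Z^*)=H_0^*$, this shows the $(\tau^*,Z^*)$-policy is optimal in ${\cal A}_0$.

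The genuine difficulties are all upstream — the boundary analysis behind \thmref{F-optimizers}, the uniform-integrability estimates behind \propref{AGn-c0-rel}, and especially the region-by-region argument of \propref{BGn-c1-rel} with its use of the ASC condition \cndref{Q-cnds}(c) on $\Gamma_4$ and the double weak-convergence argument on $\Gamma_5$. Inside the theorem proof proper the only points requiring care are (i) using one subsequence $\{t_j\}$ that simultaneously realizes $J(\tau,Z)$, makes $\{\mu_{0,t_j}\}$ convergent, and is the sequence fed to Propositions \ref{bdd-adjoint}, \ref{AGn-c0-rel} and \ref{BGn-c1-rel} (each of which accepts an arbitrary such sequence), and (ii) orienting every $\liminf$ so that superadditivity and the Portmanteau inequality are used in the favorable direction.
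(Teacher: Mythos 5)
Your proposal is correct and follows essentially the same route as the paper: pick a cost-realizing sequence $\{t_j\}$, extract a weak limit $\mu_0$ of the occupation measures, add the limiting adjoint relation of \propref{bdd-adjoint} applied to $U_n\in{\cal D}$, and then let $n\to\infty$ using superadditivity of $\liminf$ together with Propositions \ref{AGn-c0-rel} and \ref{BGn-c1-rel}. The only (harmless) differences are cosmetic: you split the two $\liminf$'s before inserting the adjoint identity rather than after, and you spell out the easy equalities $J(\tau^*,Z^*)=H_0(\yzstar,\zzstar)=H_0^*$ and the membership $(\tau^*,Z^*)\in{\cal A}_0$, which the paper leaves implicit via \eqref{eq-sS-cost}.
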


\begin{proof}
Let $(\tau,Z)\in {\cal A}_0$ satisfy $J(\tau,Z) < \infty$.  Let $X$ satisfy \eqref{controlled-dyn}, $\mu_{0,t}$ and $\nu_{1,t}$ be defined by \eqref{mus-t-def} for each $t > 0$.  Let $\{t_j\}$ be a sequence with $t_j \rightarrow \infty$ and
\begin{eqnarray} \label{cost-representation} \nonumber
J(\tau,Z) &=& \lim_{j\rightarrow \infty} \mbox{$\frac{1}{t_j}$} \EE\left[\int_0^{t_j} c_0(X(s))\, ds + \sum_{k=1}^\infty I_{\{\tau_k \leq t_j\}} c_1(X(\tau_k-),X(\tau_k))\right] \\
&=& \lim_{j\rightarrow \infty} \left(\int_{\overline{\cal E}} {c_0}(x)\, {\mu}_{0,t_j}(dx) + \int_{\overline{\cal R}} \wdh{c_1}(y,z)\, {\nu}_{1,t_j}(dy\times dz)\right).
\end{eqnarray}
The tightness of $\{{\mu}_{0,t_j}\}$ implies the existence of a weak limit ${\mu}_0$; without loss of generality, assume ${\mu}_{0,t_j} \Rightarrow {\mu}_0$ as $j\rightarrow \infty$.  \propref{J-finite} and its proof establish that 
$$\int_{\overline{\cal E}} {c_0}\, d {\mu}_0 \leq \liminf_{j\rightarrow \infty} \int_{\overline{\cal E}} {c_0}\, d {\mu}_{0,t_j} \leq J(\tau,Z) < \infty.$$

Since $U_n \in {\cal D}$, $\displaystyle\lim_{j\rightarrow \infty} \int_{\overline{\cal E}} AU_n\, d {\mu}_{0,t_j} = \int_{\overline{\cal E}} AU_n\, d {\mu}_0$.  \propref{bdd-adjoint} implies that for each $n$,
\begin{equation} \label{Gn-bar}
\lim_{j\rightarrow \infty} \left(\int_{\overline{\cal E}} AU_n(x)\,  {\mu}_0(dx) + \int_{\overline{\cal R}} \wdh{BU_n}(y,z)\,  {\nu}_{1,t_j}(dy\times dz)\right) = 0
\end{equation}
so adding \eqref{cost-representation} and \eqref{Gn-bar}  and taking the limit inferior as $n\rightarrow \infty$ yields, 
\begin{align*}
& J(\tau,Z) \\ &= \liminf_{n\rightarrow \infty} \lim_{j\rightarrow \infty} \left(\int_{\overline{\cal E}} (AU_n(x) + c_0(x))\,  {\mu}_{0,t_j}(dx) 
+ \int_{\overline{\cal R}} (\wdh{BU_n}(y,z) + \wdh{c_1}(y,z))\,  {\nu}_{1,t_j}(dy\times dz)\right) \\
&\geq \liminf_{n\rightarrow \infty} \liminf_{j\rightarrow \infty} \int_{\overline{\cal E}} (AU_n(x) + c_0(x))\,  {\mu}_{0,t_j}(dx) \\&  \qquad 
+\; \liminf_{n\rightarrow \infty} \liminf_{j\rightarrow \infty} \int_{\overline{\cal R}} (\wdh{BU_n}(y,z) + \wdh{c_1}(y,z))\,  {\nu}_{1,t_j}(dy \times dz) \\
&\geq \liminf_{n\rightarrow \infty} \int_{\overline{\cal E}} (AU_n(x) + c_0(x))\,  {\mu}_0(dx)  
+\; \liminf_{n\rightarrow \infty} \liminf_{j\rightarrow \infty} \int_{\overline{\cal R}} (\wdh{BU_n}(y,z) + \wdh{c_1}(y,z))\,  {\nu}_{1,t_j}(dy \times dz) \\
&\geq H_0^*;
\end{align*}
Propositions \ref{AGn-c0-rel} and \ref{BGn-c1-rel} establish the last inequality.
\end{proof}

\section{Examples} \label{sect:examples}
We begin by briefly discussing the inventory management models in \cite{helm:18}.  This paper shows that optimality of a $(\yzstar,\zzstar)$ policy extends to models having deficient supply.  The main example (in \sectref{LO}) demonstrates the efficacy of this optimization approach for a more complicated stochastic logistic inventory model having nearly proportional yields.

\subsection{Drifted Brownian motion inventory models} \label{BMIM}
The first inventory problem considers the classical fundamental process of a drifted Brownian motion $X_0$ satisfying the stochastic differential equation
\begin{equation} \label{cost-61}
dX_0(t) = - \mu\, dt + \sigma\, dW(t), \qquad X_0(0)=x_0,$$
in which $\mu, \sigma > 0$ and $W$ is a standard Brownian motion, under the cost structure 
$$c_0(x) = \left\{\begin{array}{cl}
-c_b\, x, & \quad x < 0, \\
c_h\, x, & \quad x\geq 0 
\end{array} \right. \quad \mbox{and} \quad c_1(y,z) = k_1 + k_2(z-y), \quad -\infty < y \leq z < \infty,
\end{equation}
with $c_b, c_h, k_1, k_2 > 0$. 

A modification of the problem has reflection at $0$ so that no backordering is allowed with the cost structure
$$c_0(x) = k_3 x + k_4 e^{-x} \quad \mbox{for } x\geq 0 \qquad \mbox{and} \qquad c_1(y,z) = k_1 + k_2 \sqrt{z-y} \quad \mbox{for } 0 \leq y \leq z < \infty,$$
again with $k_1, k_2, k_3, k_4 > 0$.

As mentioned previously, \cndref{diff-cnd} is the same in both papers and Condition~2.2 of \cite{helm:18} is the same as \cndref{cost-cnds} in this paper.  Further, Condition~2.3 of the previous paper is more restrictive than \cndref{extra-cnd} here.  Thus Conditions~\ref{diff-cnd}, \ref{cost-cnds} and \ref{extra-cnd} are satisfied by both of these models, as established in the 2018 paper.  Thus for any family ${\cal Q}$ satisfying \cndref{Q-cnds}, the conditions of \thmref{F-optimizers} are satisfied and there exists an optimizing pair $(\yzstar,\zzstar) \in {\cal R}$ of $H_0$. 

Turning to \thmref{G0-feasible} to establish the optimality of the $(\yzstar,\zzstar)$ policy, \cndref{AG0-unif-int} of this paper differs from Condition~5.1 of \cite{helm:18} only in the use of $U_0 = g_0 - H_0^* \zeta$ in place of $G_0 = g_0 - F_0^* \zeta$.  The verification of Condition~5.1 of the previous paper does not rely on $F_0^*$.  Thus the same argument using $U_0$ in place of $G_0$ demonstrates that \cndref{AG0-unif-int} holds for both problems involving the drifted Brownian motion  model.  \thmref{G0-feasible} therefore establishes that the $(\yzstar,\zzstar)$ ordering policy is optimal.

\subsection{Geometric Brownian motion storage models}
The second model examined in \cite{helm:18} takes its fundamental dynamics to be a geometric Brownian motion process satisfying the stochastic differential equation

$$dX_0(t) = -\mu X_0(t)\, dt + \sigma X_0(t)\, dW(t), \qquad X_0(t) = x_0 \in (0,\infty),$$
in which $\mu, \sigma > 0$.  Two different cost structures were analyzed:
$$c_0(x) = k_3 x + k_4 x^\beta \quad \mbox{for } 0 < x < \infty \qquad \mbox{and} \qquad c_1(y,z) = k_1 + k_2\sqrt{z-y} \quad \mbox{for } 0 < y \leq z < \infty;$$
and
\begin{eqnarray*}
c_0(x) &=& \left\{\begin{array}{cl}
k_3 (1-x), & \qquad \qquad \qquad \qquad \mbox{for } 0 < x < 1, \\
k_4(x-1), & \qquad \qquad \qquad \qquad \mbox{for } 1 \leq x < \infty, 
\end{array}\right. \\
c_1(y,z) &=& k_1 + \mbox{$\frac{1}{2}$}(y^{-\frac{1}{2}} - z^{-\frac{1}{2}}) + \mbox{$\frac{1}{2}$} (z-y) \quad \; \; \mbox{for } 0 < y \leq z < \infty,
\end{eqnarray*}
in which the parameters $k_1, k_2, k_3, k_4 > 0$ and $\beta < 0$.

For the geometric Brownian motion model, Conditions~\ref{diff-cnd}, \ref{cost-cnds} and \ref{extra-cnd} are shown to be satisfied in the 2018 paper.   Thus for any family ${\cal Q}$ satisfying \cndref{Q-cnds}, \thmref{F-optimizers} establishes the existence of an optimizing pair $(\yzstar,\zzstar) \in {\cal R}$ of $H_0$.  Furthermore, similarly to the drifted Brownian motion model, \cndref{AG0-unif-int} follows from the same analysis as in the proof of Theorem~6.4 of our 2018 paper with $U_0$ replacing $G_0$.  Therefore \thmref{G0-feasible} shows that $(\yzstar,\zzstar)$ ordering policy is optimal for deficient supply models.

\subsection{Logistic storage model} \label{LO}
Our third example is a logistic inventory model  in a random environment with a special family of random supplies. The process is an adaptation to an inventory set-up of a population model analyzed by \cite{lungu:97} in the context of a particular harvesting study.

For this model, the inventory level of a product (in the absence of orders) satisfies the stochastic differential equation
\begin{equation} \label{dyn_logistic}
dX_0(t) =  - \mu X_0(t) (k-X_0(t))\, dt + \sigma X_0(t) (k-X_0(t))\, dW(t), \qquad X_0(0) = x_0,
\end{equation}
in which $k$, $\mu$ and $\sigma$ are positive constants.  Set $\beta := - \frac{2 \mu }{k \sigma^2}$ and require $\beta < -1$. The process $X_0$ evolves on the bounded state space ${\mathcal I} = (0,k)$.  With reference to Chapter 15 of \cite{karl:81}, straightforward calculations verify that this model satisfies \cndref{diff-cnd}.  In particular, both endpoints are natural, $0$ is attracting and $k$ is non-attracting; see also \cite{hell:95}.  In comparison with  geometric Brownian motion, both boundary points are finite.  We identify the scale function and speed measure in \eqref{scale} and \eqref{speed} for a particular scaling of the logistic model.


A common yield structure when there are deficient supplies is provided by the uniform distribution on $(y,z)$, representing proportional yields.  When $\I$ is unbounded above, this family of uniform distributions on $(y,z)$ for $y,z \in \I$ is easily seen to satisfy \cndref{Q-cnds}(c) since the mass escapes to $\infty$ as $z\to\infty$.   Unfortunately this condition is no longer true for a uniform distribution with $y$ fixed and $z \to k$ for this example since the right boundary is a finite value. Thus, we adopt the famly of {\em `$z$-skewed uniform distributions'}\/ as a surrogate, resulting in a model with nearly proportional yields. 

To be precise, choose a large integer $j$ and for each $(y, z) \in  {\mathcal R}$, let $Q(\cdot \,;y,z)$ be the uniform distribution on the interval having {\em left endpoint} $(1-(z/k)^j) y + (z/k)^j z$ and  {\em right endpoint} $z$. In this choice, the left enpoint is a convex combination of $y$ and $z$ with a weight factor $(z/k)^j$ that more heavily favors $z$ as $z$ approaches the upper boundary $k$.  Clearly, this family of distributions satisfies the ASC condition as well as the MDG condition in \cndref{Q-cnds}(a,ii).  Furthermore, depending on the choice of $j$, the measure $Q(\cdot\,; y,z)$ is a `reasonable' approximation to the uniform 
distribution on $(y,z)$ when $z$ is not too close to $k$.  Therefore this family of random effects distributions results in a model having nearly proportional yields.  Finally, we take $Q(\cdot\,,y,y) = \delta_y(\cdot)$ so that \cndref{Q-cnds}(a,i) holds and it is easy to verify the weak convergence of the measures in \cndref{Q-cnds}(b).
 
 For this example, we choose the bounded holding cost function $c_0(x) := k_0 (x-{\bar x})^2$ for $0 < x < k$, in which $k_0$ is a positive constant and the number ${\bar x} \in (0,k)$ characterizes a `preferred' inventory level.  Further, we choose the order cost function $c_1(y,z)$ in \eqref{cost-61}.  Again, straightforward analysis verifies \eqref{c0-M-integrable} and hence  \cndref{cost-cnds} is satisfied.  

Scaling the inventory process by the factor $k$ and adjusting the parameters appropriately, we can set $k = 1$ without loss of generality.  The scale function $S$ and the speed measure $M$ associated with $X_0$ can be determined as follows.  Let $C_1 = ({x_0/(1-x_0)})^{\beta}$,  $C_2 = 1/(\sigma^2C_1) = ((1-x_0)/x_0)^{\beta}/\sigma ^2$, and let ${_2F_1}$ denote the (Gaussian) hypergeometric function. Let $\tilde S(x) = C_1  x^{(1-\beta)}/(1-\beta)\, {_2F_1}(1-\beta,-\beta;2-\beta;x)$.  Then, 
\begin{equation} \label{scale}
S(x) = C_1  \int_{x_0}^{x} {((1-u)/u)}^{\beta} \, du =    \, \,{\tilde S(x)} - {\tilde S(x_0)},\qquad 0 < x < 1,
\end{equation}
while $M[a,b] = \int_{a}^{b}m(v) dv$ for any $[a, b] \subset (0,1)$, where the speed density $m$ is given by  
\begin{equation} \label{speed}
m(v) = C_2 (1-v)^{-(\beta+2)}{v}^{\beta - 2}, \qquad 0 < v < 1.
\end{equation}
For later reference, we note that $S'(x) = C_1 (\frac{1-x}{x})^\beta$ for $x\in (0,1)$.

Now turning to \cndref{extra-cnd}, since each boundary is natural, we need to check that there is some $(y,z)\in \R$ for which $H_0(y,z)=(\wdh{Bc_1}(y,z)+\wdh{Bg_0}(y,z))/\wdh{B\zeta}(y,z)$ is smaller than the holding cost rates at the boundaries.  Toward this end, the expressions for $\zeta$ and $g_0$ simplify considerably when we set $x_0 = {\bar x} = 1/2$ so we make this selection for this illustration.  These functions are 
\begin{align} \label{zeta-63}
\zeta(x) & = -\,\frac{2 \left(1-2 x+2 \beta  \ln(2-2 x)+\beta  (1+\beta ) \ln\left(\frac{x}{1-x}\right)\right)}{\sigma ^2 \beta  \left(-1+\beta^2\right)},\\ \label{g0-63}
g_0(x) &= \frac{k_0 \left((-1+2 x) \left(-1+2 \beta ^2\right)-2 \beta  \ln(2-2 x)-\beta  (1+\beta ) \ln\left(\frac{x}{1-x}\right)\right)}{2\sigma ^2 \beta  \left(-1+\beta ^2\right)}\ .
\end{align}
The functions $\wdh{Bc_1}$, $\wdh{B\zeta}$ and $\wdh{Bg_0}$ are then obtained by integrating the functions given above with respect to the measures $Q$. Usually, this integration is best accomplished using 
 software packages such as Maple or Mathematica since the formulas become messy.  Then by elementary but rather lengthy calculations one verifies \cndref{extra-cnd}.  

For more general parameters in this model, \eqref{zeta-63} and \eqref{g0-63} become more involved and even become analytically intractable for different families of random effects measures.  An alternative approach to verifying \cndref{extra-cnd} is to simply optimize $H_0$ and then compare the optimal value $H_0^*$ with the cost rates $c_0(0) = k_0/4 = c_0(1)$.  An optimizing pair $(y^*,z^*)$ in the interior would then satisfy \cndref{extra-cnd} for this model when $H_0^* < k_0/4$.   Minimizing $H_0$ is a two-dimensional optimization problem.  Since \cndref{extra-cnd} only requires the existence of a pair $(y,z)\in {\cal R}$, other alternatives for verifying this condition would be: (i) to fix one of the variables or a relation between the variables,  perform a one-dimensional optimization and compare this value of $H_0$ against $k_0/4$; or (ii) to compare the values of $H_0$ from a random search of $\cal R$.  Each of these alternate approaches is numerical, rather than analytic.

Finally, to see that an $(s,S)$-policy is optimal we need to verify Condition \ref{AG0-unif-int} (a, ii) and (b, ii). To this end, recall $U_0(x) = g_0(x) - H_{0}^{*} \zeta(x)$ and observe that $c_{0}(x) - H_{0}^{*}$ is uniformly bounded on the unit interval. With $\zeta$ and $g_0$ given in \eqref{g0-fn} and using the  expressions of the scale density \eqref{scale} and the speed density \eqref{speed} we have
\begin{align} \label{e:kurtsig_U0'estimate:ex3} \nonumber 
 |\sigma x (1 - x) U'_0(x)|  &  \le     |\sigma x (1 - x) S'(x)| \int_{x}^{1} |c_0(v) - H_{0}^{*}| dM(v) \\ 
  & \le    K x^{1-\beta}(1-x)^{1+\beta}\int_{x}^{1} v^{\beta-2} (1-v)^{-\beta-2} dv,
\end{align} 
where $K$ is a positive constant independent of $x$ or $x_{0}$. 
To see that the left hand side of  \eqref{e:kurtsig_U0'estimate:ex3} is uniformly bounded on $[0,1]$  which, in turn, implies   Condition \ref{AG0-unif-int} (a, ii) and (b, ii), it is clearly sufficient to find bounds in some neighborhoods of the two endpoints.  The simple idea is to verify that: (i) for $x$ close to $1$, the integral on the right hand side of the inequalities decreases at the same rate as the factor $(1-x)^{1+\beta}$ increases; and (ii) when $x$ is close to zero, the integral increases at a rate no faster than the rate at which the factor  $x^{1-\beta}$ decreases.  

(i) For $x\in (\frac12, 1)$ the integral in \eqref{e:kurtsig_U0'estimate:ex3} is dominated by
$$\int_{x}^{1} v^{\beta-2} (1-v)^{-\beta-2} dv  \quad  \le  \quad 2^{2-\beta} \int_{x}^{1}(1-v)^{-\beta-2} dv  \quad \le  \quad \mbox{$\frac{2^{2-\beta}}{-\beta-1}$} (1-x)^{-\beta-1}$$
and hence the left-hand side of \eqref{e:kurtsig_U0'estimate:ex3} is bounded by some $K_1$ for $x\in (\frac12,1)$.   

(ii) Similarly for $x\in (0, \frac12)$, a dominating function for the integral in \eqref{e:kurtsig_U0'estimate:ex3} is determined as follows:
\begin{eqnarray*}
\int_{x}^{1} v^{\beta-2} (1-v)^{-\beta-2} dv &=& \int_{x}^{1/2} v^{\beta-2} (1-v)^{-\beta-2} dv + \int_{1/2}^{1} v^{\beta-2} (1-v)^{-\beta-2} dv \\
&\leq&(2^{2+\beta}\vee 1)  \int_{x}^{1/2} v^{\beta-2}  dv + K_2 \\
&=& \mbox{$\frac{2^{2+\beta}\vee 1}{1-\beta}$}\, x^{\beta-1} + K_3,
\end{eqnarray*}
in which $K_2$ is the value of the integral over $[\frac12,1]$ and $K_3$ then adjusts this value by the contribution of the first integral at the boundary $1/2$.   
Thus, taking into account the factor $x^{1-\beta}$ on the right-hand side of \eqref{e:kurtsig_U0'estimate:ex3}, the left-hand side of \eqref{e:kurtsig_U0'estimate:ex3} is bounded for $x\in (0,1/2)$.

Using both estimates in \eqref{e:kurtsig_U0'estimate:ex3} together with the fact that $\lim_{x\to 0} \sigma x (1-x) U'_0(x)$ exists and is finite  we have thus shown that $|\sigma x (1-x) U'_0(x)|$ is uniformly bounded on $[0,1]$.  Since the denominators are bounded below by $1$, \cndref{AG0-unif-int} holds. 

In summary, the model satisfies Conditions~\ref{diff-cnd}, \cndref{Q-cnds}, \ref{cost-cnds} and \ref{extra-cnd}.  Therefore \thmref{F-optimizers} establishes the existence of an optimizing pair $(\yzstar,\zzstar) \in {\cal R}$ of $H_0$.  Furthermore, since \cndref{AG0-unif-int} holds, \thmref{G0-feasible} shows that the $(\yzstar,\zzstar)$ ordering policy is optimal for this particular logistic inventory model.
  
Finally, we numerically illustrate the effect of using the optimization results in this paper for a particular set of parameters.  For comparison purposes, three models based on the logistic dynamics in \eqref{dyn_logistic} are examined.  Model 1 assumes no noise by setting $\sigma=0$ so that the dynamics are deterministic, and uses the non-deficient supply measures $Q(\,\cdot\,;y,z) = \delta_{\{z\}}(\,\cdot\,)$ for all $(y,z) \in {\cal R}$.  Model 2 has $\sigma=1/10$ resulting in random fluctuations in the inventory level but also uses $Q(\,\cdot\,;y,z) = \delta_{\{z\}}(\,\cdot\,)$ for all $(y,z) \in {\cal R}$ so that the amount ordered is the amount delivered.  Model 3 takes $\sigma=1/10$ and uses the nearly proportional yield transition functions $Q$ defined earlier in this subsection, with $j=10$.  The other parameters in this illustration are $k=1$, $\mu=1/20$, $k_0=100$, $k_1=9$, $k_2=4$ and $x_0 = {\bar x} = 1/2$.

Table \ref{table} illustrates the impact a random environment and/or random supplies have on optimal characteristics of the logistic inventory model.  Specifically, the following characteristics of the optimal solutions have been computed:
\begin{itemize}
\item the order `From' level $\yzstar$ and the deterministic order `To' or nominal order `To' level $\zzstar$;
\item the `Mean Supply', a deterministic quantity in Models 1 and 2;
\item the optimal expected long-run average `Cost'; and
\item the `Mean Cycle Length'; the cycle length is again deterministic for Model 1.
\end{itemize}
Observe that the optimal value of $H_0^* = 1.33092 = H_0(0.384973,0.6575) < 25 = k_0/4$ so \cndref{extra-cnd} is satisfied.
\begin{table}[h!]
    \label{tab:table1}
     \begin{center}
    \begin{tabular}{|l||c|c|c|c|c|} 
    \hline
      \textbf{Model} & \textbf{From} & \textbf{To} & \textbf{Mean Supply} & \textbf{Cost} & \textbf{Mean Cycle Length}\\
      \hline
      Model 1 & 0.40567 & 0.59433 & 0.188661 & 0.938043 &  15.2759\\
      Model 2 & 0.381724 & 0.56993 & 0.188206 & 1.00067 &  15.2779\\
      Model 3 & 0.384973 & 0.6575 & 0.138321 & 1.33092 &  11.2843\\ \hline
    \end{tabular}
  \end{center}
    \caption{Comparison of Three Logisitic Inventory Models. \label{table}}
\end{table}

From a management point of view the following observations are important.  The nearly proportional yield model having random fluctuations in inventory result in cost increases of 42\% and 33\% over Models 1 and 2, respectively.  Also, the uncertainty of the environment and the fluctuating deliveries typically shorten the mean cycle length, despite the nominal order interval increasing in length as randomness is added to the process and to the delivered amounts.  Thus, ordering tends to occur more frequently for the stochastic models.   

 Additional insights into the characteristics of the optimal nominal policy and optimal inventory process can be obtained by more extensive sensitivity analysis.  For instance, for modifications of this example, various statistics of the aforementioned quantities such as the mean cycle time, as well as other quantities,  can be computed or derived from simulation studies.  

As indicated earlier, uniqueness of the optimal policy is not analytically guaranteed.  However, one may obtain contour plots of $H_0$ numerically and thereby   determine the uniqueness of the optimal policy for this particular model and for more general stochastic differential equations and $Q$ distributions.

\end{document}